\def\tens#1{\pmb{\mathsf{#1}}}
\def\vec#1{\boldsymbol{#1}}
\def\diver{\mathop{\mathrm{div}}\nolimits}
\def\tr{\mathop{\mathrm{tr}}\nolimits}
\def\dd#1#2{\frac{\d #1}{\d #2}}
\def\bef{\vec{f}}
\def\bn{\vec{n}}
\def\btau{\vec{\tau}}
\def\bH{\tens{H}}
\def\bG{\tens{G}}
\def\bT{\tens{T}}
\def\bD{\tens{D}}
\def\bS{\tens{S}}
\def\bI{\tens{I}}
\def\bv{\vec{v}}
\def\b0{\vec{0}}
\def\bw{\vec{w}}
\def\bu{\vec{u}}
\def\obS{\bS}
\def\Wn#1{W^{1,#1}_{\bn}}
\def\Lnd#1{L^{#1}_{\bn, \diver}}
\def\Wnd#1{W^{1,#1}_{\bn, \diver}}
\def\Wndm#1{W^{-1,#1}_{\bn, \diver}}
\def\Wnm#1{W^{-1,#1}_{\bn}}
\renewcommand{\div}{{\rm div}}
\def\bff{\vec{f}}
\def\bxi{\vec{\xi}}
\def\bG{\tens{G}}
\newcommand{\R}{\mathbb{R}}
\def\XXint#1#2#3{{\setbox0=\hbox{$#1{#2#3}{\int}$ }
\vcenter{\hbox{$#2#3$ }}\kern-.6\wd0}}
\def\dd{\,{\rm d}}
\newtheorem{Theorem}{Theorem}[section]
\newtheorem{Lemma}{Lemma}[section]
\newtheorem*{Remarkno}{Remark}
\title{Analysis of a viscosity  model for concentrated polymers}
\author[M. Bul\'i\v{c}ek]{Miroslav Bul\'i\v{c}ek}
\address{Miroslav Bul\'i\v{c}ek. Mathematical Institute of Charles University,
Sokolovsk\'{a} 83, 186 75 Prague, Czech Republic}
\email{mbul8060@karlin.mff.cuni.cz}
\author[P.~Gwiazda]{Piotr Gwiazda}
\address{Piotr Gwiazda. University of Warsaw, Institute of Applied Mathematics and Mechanics,
Banacha~2, 02-097 Warsaw, Poland}
\email{pgwiazda@mimuw.edu.pl}
\author[E. S\"{u}li]{Endre S\"{u}li}
\address{Endre S\"{u}li. Mathematical Institute, University of Oxford, Woodstock Road, Oxford OX2 6GG, United Kingdom}
\email{endre.suli@maths.ox.ac.uk}
\author[A.~\'{S}wierczewska-Gwiazda]{Agnieszka \'{S}wierczewska-Gwiazda}
\address{Agnieszka \'{S}wierczewska-Gwiazda. University of Warsaw, Institute of Applied Mathematics and Mechanics,
Banacha~2, 02-097 Warsaw, Poland}
\email{aswiercz@mimuw.edu.pl}
\thanks{M.~Bul\'{\i}\v{c}ek thanks the project MORE (ERC-CZ project LL1202 financed by the Ministry of Education, Youth and Sports, Czech Republic) and the Neuron Fund for Support of Science. M.~Bul\'{\i}\v{c}ek is a member of the Ne\v{c}as Center for Mathematical Modeling. A.~\'{S}wierczewska-Gwiazda acknowledges the support of the project IdP2011/00066.  The research was partially supported by the Warsaw Center of Mathematics and Computer Science.  P.~Gwiazda is a coordinator of the International Ph.D. Projects Programme of the  Foundation for Polish Science operated within the Innovative Economy Operational Programme 2007--2013 (Ph.D. Programme: Mathematical Methods in Natural Sciences). E.~S\"{u}li acknowledges the financial support of the project MORE (ERC-CZ project LL1202 financed by the Ministry of Education, Youth and Sports, Czech Republic) and the Oxford Centre for Nonlinear Partial Differential Equations (OxPDE)}
\begin{document}

\keywords{Non-Newtonian fluid, polymers, monomers, weak solution, large-data existence}
\subjclass[2000]{35Q35, 76D03, 35M13, 35Q92}

\begin{abstract}
The paper is concerned with a class of mathematical models for polymeric fluids, which involves the coupling of the Navier--Stokes equations for a viscous, incompressible, constant-density fluid with a parabolic-hyperbolic integro-differential equation describing the evolution of the polymer distribution function in the solvent, and a parabolic integro-differential equation for the evolution of the monomer density function in the solvent. The viscosity coefficient, appearing in the balance of linear momentum equation in the Navier--Stokes system, includes dependence on the shear-rate as well as on the weight-averaged polymer chain length. The system of partial differential equations under consideration captures the impact of polymerization and depolymerization effects on the viscosity of the fluid. We prove the existence of global-in-time, large-data weak solutions under fairly general hypotheses.
\end{abstract}

\maketitle

\section{Viscosity models for polymers -- formulation of the problem}
Contemporary approaches to the modelling of polymeric fluids have exploited multi-scale descriptions in an essential way. Mathematical models have thus been built by coupling systems of partial differential equations describing the motion of the solvent with equations that track the evolution of the microscopic behaviour of the solute in the solution. In this paper we focus on the modelling and the analysis of two phenomena: we wish to explore how the rheological properties of the fluid are affected by the presence of, possibly very long, chains of macromolecules; and, second, we aim to investigate the possible mechanisms for polymerization and depolymerization effects that can also heavily depend on the properties of the flow. The solvent is considered to be an incompressible fluid and we assume that the possible changes of the density are negligible relative to other phenomena. Thus, the underlying system of equations under consideration consists of the balance of linear momentum and the incompressibility constraint, i.e.,
\begin{equation}\label{balance-lm}
\begin{split}
\partial_t (\varrho \bv) + \diver_x (\varrho \bv \otimes \bv) - \diver_x \bT&= \varrho \bef,\\
\diver_x \bv&=0.
\end{split}
\end{equation}
These equations are assumed to be satisfied in a time-space cylinder $Q:=(0,T)\times \Omega$, with $\Omega \subset \mathbb{R}^d$, $d \in \{2,3\}$, being the physical flow domain. Here, $\bv:Q\to \mathbb{R}^d$ denotes the velocity of the solvent, $\varrho$ is the constant density, $\bef:Q\to \mathbb{R}^d$ represents the density of volume forces and $\bT:Q\to \mathbb{R}^{d\times d}$ is the Cauchy stress. In order to close the system \eqref{balance-lm}, one also needs to relate the Cauchy stress to other quantities describing the qualitative behaviour of the material. Despite their importance, in the present paper we shall, for the sake of simplicity, neglect all thermal effects and will focus instead on mechanical properties of the fluid in the isothermal setting. In particular, we are interested in models that link the Cauchy stress to the shear rate and to the contribution of the microscopic polymer molecules, in order to investigate how the level of interaction between polymer chains, their lengths,
 and their configuration influence the Cauchy stress.

That the presence of macromolecules in a solvent dramatically changes the properties of the flow was already observed in \cite{Fl40}, and has been, ever since, the focus of mathematical models. In general terms, one can find two different approaches to the problem: the empirical (rheological) one, where one typically fits the observed data in the model and introduces empirical formulae for the viscosity, $\nu$, based on the ratio of the magnitude of the Cauchy stress and the magnitude of the shear rate; and a second one, based on modelling the microstructure of the polymer under consideration using tools from statistical mechanics. The second approach has been particularly common in the case of dilute polymers, i.e., when the concentration of polymer molecules in the solvent is very low, so that the interactions of the macromolecules can be neglected. In such models the presence of macromolecules in the solvent is typically modelled by additively supplementing the Cauchy stress
tensor with an elastic stress tensor. This then leads to a closed system of partial differential equations; for a derivation of the model and further related mathematical results and references we refer the reader to Barrett \& S\"uli \cite{BS1,BS2,BS3,BS4,BS5} and Bul\'{\i}\v{c}ek, M\'alek \& S\"uli \cite{BMS}; see also \cite{OtTz2008} and \cite{BaTr2013}.

On the other hand, in the case of concentrated polymers one has to take into account the interactions of the polymer molecules (see, for example, \cite{DoiEdwardsbook}). Here we shall be concerned with models in which the only important property of the microstructure that influences the macroscopic flow equations is the polymer chain length, in the sense that the viscosity coefficient appearing in the balance of linear momentum equation is considered to be a function of the polymer chain length, resulting in a family of {\it viscosity models}.  There is an evident contrast between mathematical studies of models that ignore the dependence of the viscosity coefficient on the lengths of polymer chains, and the vast set of experimental data indicating the influence of the polymer chain length on the viscosity, see~\cite{BeFo68, NiMa98, Fl40, Te2002, YoBoBaGaUs2009} among others. In particular, none of the approaches mentioned above considered the impact of polymerization and depolymerization effects on the flow properties.
The goal of  this paper is therefore to explore a class of viscosity models that incorporate the influence of polymerization and depolymerization on the macroscopic flow variables. Specifically, we provide a rigorous proof of the existence of a global in time, large-data weak solution to the class of models under consideration.

\subsection{Viscosity models: the relationship between the viscosity, the shear rate and the polymer chain length}
It was experimentally observed more than seventy years ago already (cf. \cite{Fl40}, for example,) that the properties of the solvent heavily, and nonlinearly, depend on both the number of polymer chains in the solvent and the polymer chain lengths. Flory had studied the relationship between the viscosity of the solvent and the number of polymer chains in the solvent for linear polyesters at constant temperature, pressure and shear-rate. Guided by experimental evidence he proposed that for linear polyesters the logarithm of the viscosity should be a linear function of the square-root of the so-called weight-averaged chain length. Moreover, it was also observed that this relationship is independent of the type of distribution function for the species in the polymer. To be more concrete, if we denote by $\psi(r)$ the distribution function of polymer chains of length $r$ (meaning that the polymer chain consists of $r$ monomers) and denoting the minimal length of a polymer by
  $r_0$,
 the total weight of the polymer can be considered to be proportional to
$$
\int_{r_0}^{\infty} r \psi(r)\dd r.
$$
This is indeed the case if one considers $r_0 \gg 1$, but must be corrected for lower values of $r_0$, see \cite{Fl40}. Thus, the weight-averaged chain length can be defined as
$$
\tilde{\psi}:= \frac{\int_{r_0}^{\infty}r^2\psi(r)\dd r}{\int_{r_0}^{\infty} r\psi(r)\dd r}.
$$
It was empirically shown in \cite{Fl40} that
$$
\ln \nu \sim \sqrt{\tilde{\psi}},
$$
in the case when one considers linear polyesters; see \cite{Florybook} for a more detailed account. These experiments indicate that, generally, one does not need to consider specific values of the distribution function but that its average in the above sense will suffice from the point of view of accurate macroscopic modelling. This was also conjectured to be the case in many other situations, and quite recently also in the case of amines at high pressures, see \cite{YoBoBaGaUs2009}. In that paper, the authors showed that the logarithm of the viscosity is not necessarily related to the square-root of the averaged chain length but it can be a more general function, though still dependent only on the averaged properties and not on the fine details of $\psi(r)$.

Further, it was common belief (based on laboratory experiments at nonconstant shear rates) that many such phenomena can be explained by using a shear-dependent viscosity instead of a polymer-length-dependent viscosity. However, this is not the case for concentrated polymers. For example, in \cite{NiMa98}, the authors studied the simultaneous influence of the shear-rate and the polymer-length on the viscosity for polydisperse polymer melts. It was observed that, up to a certain critical value of the shear rate, the fluid behaves as a Newtonian fluid with viscosity depending only on the chain length, while for large shear-rates significant shear-thinning was observed. Moreover, it was also shown that, asymptotically, the viscosity depends only on the shear-rate and the influence of the chain length in large shear-rate regions can be neglected. In addition, it also transpired that the weight-averaged chain length was not an appropriate quantity in the cases considered, and the authors suggested instead the following general form of weighting:
\begin{equation}\label{chain_av}
\tilde{\psi}:= \frac{\int_{r_0}^{\infty}\omega(r)r\psi(r)\dd r}{\int_{r_0}^{\infty} r\psi(r)\dd r},
\end{equation}
where the function $\omega$ represents the most significant lengths of the polymer chains. Surprisingly, during the last decade, it was observed that there exist materials (e.g. polystyrene-decalin, polyethylen) that exhibit the opposite phenomenon: shear-thickening, see \cite{sherThic}. For all these reasons, we consider a much more general form of the Cauchy stress:
$$
\bT:=\bS - q\,\bI,
$$
where $q$ is the pressure and $\bS$ represents the viscous part of the Cauchy stress; we further assume that $\bS$ is of the form
\begin{equation} \label{T00}
{\bS(\tilde{\psi},\bD_x\bv )}:= \nu(\tilde\psi, |\bD_x\bv|)\,\bD_x\bv,
\end{equation}
where $\bD_x \bv$ is the symmetric velocity gradient and $\tilde{\psi}$ is a general weight-averaged chain length function of the form \eqref{chain_av}, which one can simplify (without loss of generality) to the form
$$
\tilde{\psi}:= \int_{r_0}^{\infty} \gamma(r)\psi (r)\dd r.
$$
We shall not specify the particular form of $\nu$, but will instead admit a general class of viscosities so as to
enable the consideration of both shear-thinning and shear-thickening fluids.

\subsection{Polymerization models}
In the above system the microscopic parameter that is taken into account is the polymer chain length, and we describe the process of elongation of these chains (by binding free monomers to polymer chains) as well as the process of breaking longer chains into shorter ones. We shall view this microscopic structure as an evolution of two populations: the population of polymer chains and the population of monomers.
This kind of description is akin to the model
for prion dynamics considered in
\cite{GrPuWe2006}; see also
\cite{CaLeDoDeMoPe2009, CaLeOeDePe2009}. As the model formulated in the current paper can be seen as an extension of the prion proliferation model in those papers to the case with spatial effects (transport by a solvent and spatial diffusion) we shall discuss it in more detail. The authors of \cite{GrPuWe2006,CaLeDoDeMoPe2009, CaLeOeDePe2009} study how the healthy prion protein and infectious prion protein populations interact in an infected organism.  The infectious prions are abnormal pathogenic conformations of the normal ones. The proposed model  considers  the infectious prion proteins to be a polymeric form of normal prion proteins.  Polymers of
infected prions can split into shorter chains. Such a splitting  transforms one infectious
polymer into two shorter infectious polymers,  which
can then attach again to normal prion proteins. However, when the length of a part of a split polymer falls
below a certain critical value, it immediately degrades into a normal prion protein, namely a monomer.
By $\psi$ we denote the distribution function of polymer chains of length $r>r_0$  that satisfy the following equation:
\begin{equation}\label{eq:psi_MB}
\begin{split}
\partial_t \psi&(t,r)  +\tau \phi(t)\partial_r\psi(t,r)
= -\beta(r)\psi(t,r) +2\int_r^{\infty}\beta(\tilde r)\kappa(r,\tilde r)\psi(t,\tilde r)\dd\tilde r.
\end{split}
\end{equation}
The term
  $\tau \phi(t)\partial_r\psi(t,r)$ represents
the gain in length of
polymer chains due to polymerization with rate $\tau>0$,
 $\beta(r)$ is the fragmentation rate, namely
the
length-dependent likelihood of splitting of polymers to
monomers,
 $\kappa(r,\tilde r)$ is the probability that a polymer chain will split into two shorter polymer chains of length $r$ and $\tilde r-r$, respectively, the term  $-\beta(r)\psi(t,r)$ is
the loss of polymer chains subject to the splitting rate $\beta(r)$, and
  the last term  is the count of all the polymer chains of length $r$ resulting from the splitting of
polymer chains of length greater than~$r$.

The evolution of  monomers is described by means of
the function $\phi(t)$, which  is the concentration of free monomers at time $t$, satisfying  the equation
\begin{equation}\label{eq:phi_MB}
\begin{split}
\frac{\dd}{\dd t}\phi&(t)= 2\int_{0}^{r_0}r\int_r^{\infty}\beta(\tilde r)\kappa(r,\tilde r)\psi(t,\tilde r)\dd\tilde r \dd r
-\phi(t)\int_{r_0}^\infty\tau\psi(t,r)\dd r.
\end{split}
\end{equation}
The first term on the right-hand side  represents the monomers gained when a
 polymer chain splits with at least one polymer chain shorter than
the minimum length $r_0$, while the second  term
 is the loss of monomers as they are polymerized.





\subsection{The complete model}
We conclude this introductory section with the precise statement of the complete system of equations under consideration. For  a given Lipschitz domain $\Omega \subset \mathbb{R}^d$, $d \in \{2,3\}$, and a given final time $T>0$, we consider the balance of linear momentum and the incompressibility constraint
 on $Q:=(0,T)\times \Omega$ in the form (after scaling by the constant density $\varrho$)
\begin{equation}\label{NS}
\begin{split}
\partial_t\bv(t,x)+ \div_x (\bv(t,x)\otimes \bv(t,x)) +\nabla_x q(t,x)- {\rm div}_x \bS(\tilde\psi(t,x),\bD_x \bv(t,x))&= \bef,\\
{\rm div}_x \bv(t,x) &= 0,
\end{split}
\end{equation}
where $\bv:Q \to \mathbb{R}^d$ denotes the velocity of the fluid (solvent), $q:Q\to \mathbb{R}$ is the pressure, and $\bef:Q\to \mathbb{R}^d$ is the density of the external body forces. The  viscous  part of the Cauchy stress $\bS:Q\to \mathbb{R}^{d\times d}$ is given by the formula
\begin{equation} \label{T}
{\bS(\tilde{\psi}(t,x),\bD_x \bv(t,x))}:= \nu(\tilde\psi(t,x), |\bD_x\bv(t,x)|)\,\bD_x\bv(t,x).
\end{equation}
Here, $\bD_x \bv$ denotes the symmetric velocity gradient, i.e., $\bD_x \bv:=\frac12 (\nabla_x \bv + (\nabla_x \bv)^{\rm T})$, and the viscosity coefficient $\nu:\R_+\times\R_+\to\R_+$ is allowed to depend on the shear-rate $|\bD_x\bv|$ and on the averaged polymer distribution function $\tilde\psi :Q\to \mathbb{R}_+$, defined by
\begin{equation}\label{psi-aver}
\tilde{\psi}(t,x):=\int_{\R_0} \gamma(r)\psi(t,x,r)\dd r,
\end{equation}
where $\R_0:=(r_0,\infty)$ and $\gamma:\R_0 \to \mathbb{R}_+$ is a continuous nonnegative function, representing the weight function associated with the averaging of polymer chain lengths. The distribution function
$\psi:Q\times \R_0\to\R_+$, which in the absence of fluid motion satisfies \eqref{eq:psi_MB}, is now assumed to satisfy the following equation:
\begin{equation}\label{eq:psi}
\begin{split}
\partial_t \psi&(t,x,r) + \bv(t,x) \cdot \nabla_x\psi(t,x,r) +\tau(r) \phi(t,x)\partial_r\psi(t,x,r)-A(r)\Delta_x\psi(t,x,r)
\\=& -\beta(r, \bv,\bD_x\bv)\psi(t,x,r) +2\int_r^{\infty}\beta(\tilde r,\bv, \bD_x\bv)\kappa(r,\tilde r)\psi(t,x,\tilde r) \dd\tilde r,
\end{split}
\end{equation}
in $Q\times\R_0$. There are therefore two additional terms compared with \eqref{eq:psi_MB}: the convective/transport term $\bv \cdot \nabla_x\psi$ due to the motion of the solvent, and the diffusion term $A\Delta_x\psi$ associated with the Brownian force acting on the polymer molecules immersed in the solvent. The parameter $r_0\in [0,\infty)$ is a fixed minimal length of a polymer molecule, $A(r):\R_0\to \mathbb{R}_+$ denotes the rate of diffusion for a particular value of $r$ and is assumed here to be a nonincreasing function of $r$,
$\tau:[r_0,\infty)\to \mathbb{R}_+$ is the polymerization rate, $\beta:\R_0\times \mathbb{R}^d\times \mathbb{R}^{d\times d} \to \mathbb{R}_+$ is the fragmentation rate of polymer chains, which, unlike \eqref{eq:psi_MB}, will also be allowed to depend on the macroscopic quantities in the model, namely on the fluid velocity and the shear rate, and, finally,
$\kappa(r,\tilde r)$ denotes the probability that a polymer molecule of length $\tilde{r}$ will split into two polymer molecules of lengths $r$ and $\tilde r-r$, respectively.
The function $\phi$ appearing in \eqref{eq:psi} is the concentration of free monomers satisfying, in the absence of fluid motion, the identity \eqref{eq:phi_MB}, and which, in the presence of a moving solvent, is assumed to satisfy the following equation:
\begin{equation}\label{eq:phi}
\begin{split}
&\partial_t \phi(t,x)+\bv(t,x) \cdot \nabla_x\phi(t,x)
-A_0\Delta_x\phi(t,x)
\\
&\quad =-\phi(t,x)\int_{r_0}^\infty\partial_r (r\tau(r))\psi(t,x,r) \dd r + 2 \int_0^{r_0} r \int_{r_0}^{\infty}\beta(\tilde r,\bv, \bD_x\bv)\kappa(r,\tilde r)\psi(t,x,\tilde r) \dd\tilde r \dd r
\end{split}
\end{equation}
in the space time cylinder $Q$. Here, the rate of diffusion $A_0$ (caused by Brownian noise) is assumed to be a positive constant and the additional transport term
$\bv \cdot \nabla_x\phi$ is due to the flow of the solvent.

We shall further assume that the velocity satisfies a Navier slip boundary condition, i.e.,
\begin{eqnarray}
\begin{aligned}
\bv \cdot \bn &= 0 &\textrm{ on }\partial \Omega,\\
(\bS \bn)_{\btau}&=-\alpha^* \bv &\textrm{ on } \partial \Omega,
\end{aligned}
\label{BC-v}
\end{eqnarray}
where $\alpha^*\ge 0$ is the domain-wall friction coefficient, $\bn$ is the unit outward normal vector to $\partial\Omega$, and for any $\bv$ we have denoted by $\bv_{\btau}:= \bv-(\bv\cdot \bn)\bn$ the projection of $\bv$ on the tangent hyperplane to the boundary. The function $\phi$ is assumed to satisfy a homogeneous Neumann boundary condition with respect to the
$x$ variable, i.e.,
\begin{equation}
\begin{split}\label{BC-phi}
\nabla_x \phi \cdot \bn =0 \textrm{ on } \partial \Omega,
\end{split}
\end{equation}
and for $\psi$ we also prescribe a homogeneous Neumann boundary condition with respect to the $x$ variable, i.e.,
\begin{equation}
\begin{split}\label{BC-psi}
\nabla_x \psi \cdot \bn =0 \textrm{ on } \partial \Omega.
\end{split}
\end{equation}
We shall further assume that $\psi$ vanishes at infinity with respect to $r$, i.e.,
\begin{equation}
\begin{split}\label{BCr-psi}
\lim_{r\to \infty}\psi(t,x,r)=0.
\end{split}
\end{equation}
Finally, to complete the statement of the problem, we prescribe the following set of initial conditions: for the velocity field we assume that
\begin{equation}
\bv(0,x)=\bv_0(x) \textrm{ in } \Omega, \qquad \diver_x \bv_0=0 \textrm{ in } \Omega, \qquad \bv_0\cdot \bn = 0 \textrm{ on }\partial \Omega,\label{IC-v}
\end{equation}
and for $\phi$ and $\psi$ we assume that
\begin{alignat}{4}
\phi(0,x)&=\phi_0(x) &&\textrm{ in } \Omega, &&\quad \phi_0\ge 0,\label{IC-phi}\\
\psi(0,x,r)&=\psi_0(x,r) &&\textrm{ in } \Omega\times (r_0,\infty), &&\quad \psi_0\ge 0.\label{IC-psi}
\end{alignat}

\subsection{Assumptions on the parameters}
In what follows $K$ will signify a universal  positive constant. We adopt the following assumptions on the data:
\begin{itemize}
\item[(A1)] The diffusion rate $A:\R_0\to \mathbb{R}_+$ is a continuous nonincreasing strictly positive function defined on $\R_0$ such that $\lim_{r \rightarrow \infty} A(r)=0$.
\item[(A2)] The polymerization rate $\tau$ is a nondecreasing bounded globally Lipschitz continuous function such that $\tau(r_0)=0$, $\tau'(r_0)>0$, and there
exists a positive constant $K$ such that
\begin{equation}
\label{tau-a}
 K^{-1}r_0\le \tau(r) + r\,\tau'(r)\quad\mbox{and} \quad
\tau(r) + \tau'(r) + r\,\tau'(r)+\frac{\tau(r)}{r}\le K.
\end{equation}
\item[(A3)] The fragmentation  rate $\beta:\R_0\times \mathbb{R}^d\times \mathbb{R}^{d\times d}\to \mathbb{R}_+$ is a smooth bounded function, which is increasing with respect to its first variable, and satisfies, for all $(r,\bv,\bD)\in \R_0\times \R^d\times\mathbb{R}^{d\times d}$,
    \begin{equation}
0<\beta(r,\bv,\bD) \le K.\label{beta-a}
    \end{equation}
Moreover, we require that the function $\eta$, defined by
\begin{equation}\label{dfeta}
\eta(r):=\sup_{\bu \in \mathbb{R}^d, \, \bD\in \mathbb{R}^{d\times d}} \frac{\partial_r\beta(r,\bu,\bD)}{\beta(r,\bu,\bD)},
\end{equation}
is measurable and nonnegative, and
\begin{equation}\label{eta-as}
0\le (1+r)\eta(r)\le K, \qquad \int_{r_0}^{\infty}\eta(r)\dd r \le K.
\end{equation}f
\item[(A4)] The kernel function $\kappa$ is, for simplicity, defined as
\begin{equation}
\label{df-kappa}
\kappa(r,\tilde{r}):=\left\{\begin{aligned}&\frac{1}{\tilde{r}} &&\textrm{if } \tilde{r}>r_0 \textrm{ and } 0<r<\tilde{r},\\
&0 &&\textrm{otherwise}.
\end{aligned}\right.
\end{equation}
It therefore follows that, for all $\alpha>0$,
\begin{equation}\label{kappa}
 \int_0^{\infty}r^{\alpha-1}\kappa(r,\tilde r)\dd r=\frac{\tilde{r}^{\alpha-1}}{\alpha}
 \qquad \mbox{for $\tilde{r}>r_0$}.
 \end{equation}
\item[(A5)] There exists a positive real number $\theta>0$ such that, for all $r\in [r_0,\infty)$, one has
\begin{equation}\label{growthgamma}
\gamma(r)\le K(1+ r)^{\theta}.
\end{equation}
\item[(A6)] We assume that $\nu:\R_+\times\R_+\to\R_+$ is a continuous function such that, for some $p>\frac{2d}{d+2}$ and for all $\bxi, \tilde{\bxi}\in\R^{d\times d}$ fulfilling $\bxi\neq \tilde{\bxi}$, one has (cf. \eqref{T00})
\begin{equation} \label{S}
\begin{split}
|{\bS(\cdot,\bxi)}|&\le K(1+|\bxi|)^{p-1},\\
{\bS(\cdot,\bxi)}\cdot\bxi&\ge K^{-1}|\bxi|^p-K,\\
(\bS(\cdot, \bxi)-\bS(\cdot,\tilde{\bxi}))\cdot (\bxi - \tilde{\bxi})&> 0.
\end{split}
\end{equation}
\end{itemize}
\subsection{The main result}
Before introducing the definition of \textit{weak  solution} to problem \eqref{NS}--\eqref{IC-psi} we shall summarize
our notational conventions. We denote by $T\in (0, \infty)$ the length of the time  interval and by $\Omega \subset \mathbb{R}^d$, $d \in \{2,3\}$, a bounded domain in $\mathbb{R}^d$ with $\mathcal{C}^{1,1}$-boundary $\partial \Omega$; we then write $\Omega \in \mathcal{C}^{1,1}$. We also set $Q:=(0,T)\times \Omega$ and $\Gamma:=(0,T)\times\partial\Omega$.

For $p\in [1, \infty]$ we define the Lebesgue space $L^p(\Omega)$ and the Sobolev spaces $W^{1,p}(\Omega)$ in the
usual way, and we denote the trace on $\partial\Omega$ of a Sobolev function $u$ by $\tr u$. If $X$ is a Banach
space, then $X^d:=X\times \dots\times X$ and we write $X^*$ for the
dual space of $X$; $L^p(0,T; X)$ denotes the Bochner space of $X$-valued $L^p$ functions defined on $(0,T)$. For (scalar, vector- or tensor-valued) functions $f$
and $g$ and $\cdot$ signifying the product of real numbers, scalar product of vectors, or scalar product of tensors, as the case may be, we shall write
\begin{align*}
\begin{aligned}
(f,g)&:= \int_{\Omega}f (x) \cdot g(x) \dd x \quad &&\textrm{ if } f\cdot g\in L^1(\Omega),\\
(f,g)_{\partial \Omega}&:=\int_{\partial \Omega} f(S)\cdot  g(S)  \dd S \quad &&\textrm{ if }
f\cdot g\in L^1(\partial \Omega),\\
\langle g, f \rangle&:= \langle g, f \rangle_{X^*,X} \quad &&\textrm{ if } f\in X \textrm{ and }
g\in X^*.
\end{aligned}
\end{align*}
We also require the space  $\mathcal{C}_{\textrm{weak}}(0,T;
L^p(\Omega))$  consisting of all $u \in L^{\infty}(0,T;
L^p(\Omega))$ such that $(u(t),\varphi) \in
\mathcal{C}([0,T])$ for all $\varphi \in
\mathcal{C}(\overline{\Omega})$.

We introduce certain subspaces (and their dual spaces) of the space of vector-valued Sobolev functions from $W^{1,p}(\Omega)^{d}$, which have zero normal component
on the boundary. First, we define, in the usual way, for any $p\in [1,\infty)$,
$$
\Lnd{p} := \overline{ \left\{ \bv \in \mathcal{D}(\Omega)^d; \, \diver_x \bv =0\right\}}^{\|\cdot\|_p},
 $$
 where by $\mathcal{D}(\Omega)$ we mean the set of all infinitely smooth functions with a compact support in the set $\Omega$.
Then, by $\mathcal{V}$ and $\mathcal{V}_{\diver}$ we denote
$$
\mathcal{V}:=\{\bv \in W^{d+2,2}(\Omega)^d; \, \bv \cdot \bn =0 \textrm{ on } \partial \Omega\}, \quad \mathcal{V}_{\diver}:= \mathcal{V}\cap \Lnd{2}.
$$
Note that $\mathcal{V}\subset W^{1,\infty} (\Omega)^d$ and therefore we can finally introduce the following spaces
for any $p\in [1,\infty)$, and $p':=p/(p-1)$:
\begin{equation*}
\begin{split}
\Wn{p} &:= \overline{\mathcal{V}}^{\|\cdot \|_{1,p}}, \; \Wnm{p'} := \left ( \Wn{p} \right )^*,\\
\Wnd{p} &:= \overline{\mathcal{V}_{\diver}}^{\|\cdot \|_{1,p}},\;
\Wndm{p'} := \left ( \Wnd{p} \right )^*.
\end{split}
\end{equation*}
Moreover, for any $\alpha>1$ we introduce the space $L^p_\alpha(\R_0):=\{\varphi\in L^p(\R_0): \int_{\R_0}r^\alpha\varphi^p(r)\dd r<
\infty\}$; analogously,  we let
$L^p_{\alpha}(\Omega \times \R_0):=\{\varphi\in L^p(\Omega \times \R_0): \int_\Omega\int_{\R_0}r^\alpha\varphi^p(x,r) \dd r \dd x <
\infty\}$, $\mathbb{R}_0:=(r_0,\infty)$.


\begin{Theorem}\label{T:theorem} Let  the assumptions  (A1)--(A6) be satisfied.
Then, for any $\Omega \in \mathcal{C}^{1,1}$, $T\in (0, \infty)$, any
$\bv_0$,  $\bef$, and any nonnegative $\psi_0$, $\phi_0$ satisfying
 \begin{equation}
\bv_0\in \Lnd{2}, \quad \bff\in L^{p'}(0,T; W^{-1,p'}_{\bn}), \quad \psi_0\in L^1(\Omega;L^1_{\theta_1^*}(\mathbb{R}_0))\cup L^2(\Omega; L^2_{3}(\mathbb{R}_0)), \quad  \phi_0\in L^\infty(\Omega), \label{T:6}
\end{equation}
with some $\theta_1^* > \theta$ and $\theta_1^*\ge 1$, there exists a quadruple $(q,\bv,\psi,\phi)$ and $q^*>1$ such that
\begin{align}
&q\in L^{q^*}(Q), \label{T:0} \\
&\bv \in  {\mathcal C}_{\textrm{weak}}(0,T; \Lnd{2})\cap L^p(0,T;W^{1,p}_{{\bn},\diver}),
\label{T:1}\\
&\psi\in L^{\infty}(0,T; L^2_3(\Omega \times \R_0))\cap L^2(0,T;L^2_{loc}(\R_0; W^{1,2}(\Omega)))\cap L^1(0,T;L^1(\Omega;L^1_{\theta_1^*}(\R_0))),\label{T3}\\
&\phi\in L^\infty((0,T)\times \Omega)\cap L^2(0,T;W^{1,2}(\Omega)), \label{T3a}
\end{align}
with $\partial_t \bv \in L^{q^*}(0,T; W^{-1, q^*}_{\bn, \diver})$,
$\partial_t\psi \in L^{q^*}(0,T; (W^{1,2}(\mathbb{R}_0\times \Omega)\cap W^{1,(q^*)'}(\mathbb{R}_0\times \Omega))^*)$
and $\partial_t\phi\in L^2(0,T; W^{-1,2}(\Omega))$, which attains the initial conditions in the following sense:
\begin{align}
&\lim_{t \to 0_+}\|\bv(t)-\bv_0\|_2^2 + \|\phi(t)-\phi_0\|_2^2 + \|\psi(t)-\psi_0\|_{2}^2 =0, \label{T:4}
\end{align}
and solves, for almost all $~t\in (0,T)$, the equations \eqref{NS}--\eqref{BC-psi} in the following sense:
for all $\bw\in W^{1,1}_{\bn}$ such that $\nabla_x\bw\in L^{\infty}(\Omega)^{d\times d}$,
\begin{align}
\begin{split}
&\langle\partial_t\bv ,\bw\rangle +
(\obS,\nabla_x\bw)
 - (\bv\otimes \bv , \nabla_x\bw) +\alpha^*(\bv , \bw)_{\partial\Omega}
 = \langle\bff,
 \bw\rangle + (q,\diver_x \bw)\,;
\end{split} \label{T:3}
\end{align}
for all $\varphi\in W^{1,\infty}(\Omega; \mathcal{D}(\mathbb{R}))$,
\begin{align}
\begin{split}
&\langle\partial_t \psi,\varphi\rangle - (\bv \psi,\nabla_x \varphi) -(\partial_r(\tau \varphi), \phi\psi)+(A(r)\nabla_x\psi, \nabla_x\phi)
\\&\qquad= -(\beta \psi,\varphi) +2\left(\int_r^{\infty}\beta \kappa \psi \dd\tilde r,\varphi\right);
\end{split}\label{T:3a}
\end{align}
and for all $z\in  W^{1,2}(\Omega)$,
\begin{align}
\begin{split}\label{T:3b}
&\langle \partial_t \phi, z\rangle-(\bv\phi,\nabla_x z)
+A_0(\nabla_x\phi,\nabla_x z)
\\
&\quad =-\left(\phi\int_{r_0}^\infty\partial_r (r\tau)\psi \dd r, z\right) + 2\left( \int_0^{r_0} r \int_{r_0}^{\infty}\beta\kappa\psi \dd\tilde r \dd r,z\right).\\
&\qquad  \end{split}
\end{align}
\end{Theorem}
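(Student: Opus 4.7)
The plan is to build a solution via a three-level approximation scheme, derive uniform a priori bounds, and then pass to the limit using compactness together with a monotone-operator / Lipschitz-truncation argument. Concretely, I would introduce: (i) a cut-off in the chain-length variable, working on the interval $(r_0, 1/\eta)$ with a homogeneous Dirichlet or Neumann condition at $r=1/\eta$, which makes $A(r)$ uniformly positive and enforces \eqref{BCr-psi} at the approximate level; (ii) a mollification $\bv^\varepsilon$ of the velocity in the transport terms of the $\psi$- and $\phi$-equations, so that after cut-off the system for $(\psi,\phi)$ becomes a linear parabolic problem in $(t,x,r)$ solvable by a Galerkin / fixed-point argument for a frozen $\bv^\varepsilon$; and (iii) a finite-dimensional Galerkin discretization of the momentum equation in a basis of $\mathcal{V}_{\diver}$ associated with the Navier boundary condition \eqref{BC-v}.

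\textbf{A priori bounds.} Testing the velocity equation by $\bv$ and using (A6) together with the boundary term gives $\bv\in L^\infty(0,T;\Lnd{2})\cap L^p(0,T;\Wnd{p})$. For $\phi$, a maximum-principle argument (or testing by $(\phi-M)_+$ with $M$ growing linearly in $t$) produces the $L^\infty$-bound, while testing by $\phi$ yields the $L^2_t W^{1,2}_x$-bound. The critical bounds are weighted moment estimates for $\psi$: multiplying the regularized equation by $r^\alpha$ and using \eqref{df-kappa}, \eqref{kappa} together with (A2)--(A3) gives
\begin{equation*}
\frac{\mathrm{d}}{\mathrm{d} t}\int_\Omega\!\int_{r_0}^\infty r^\alpha\psi\,\mathrm{d} r\,\mathrm{d} x \le \left(\frac{2}{\alpha+1}-1\right)\!\int_\Omega\!\int_{r_0}^\infty r^\alpha\beta\psi\,\mathrm{d} r\,\mathrm{d} x + C(\|\phi\|_\infty,\alpha)\!\int_\Omega\!\int_{r_0}^\infty r^{\alpha-1}\psi\,\mathrm{d} r\,\mathrm{d} x,
\end{equation*}
so for $\alpha>1$ the fragmentation contribution is dissipative and a Gronwall argument inherits the $L^\infty_t L^1_x(L^1_{\theta_1^*})$-bound from the initial data. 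Testing by $r^3\psi$, again using the explicit form of $\kappa$ and boundedness of $\beta$, yields the $L^\infty_t L^2_3(\Omega\times\R_0)$-bound, and (A1) together with the $A(r)\Delta_x\psi$ term then supplies the local $L^2_tL^2_{loc}(\R_0;W^{1,2}(\Omega))$-estimate.

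\textbf{Compactness and limit passage.} Limits are taken in the order Galerkin $\to$ mollification $\varepsilon\to 0$ $\to$ cut-off $\eta\to 0$. Strong convergence of $\bv$ in $L^2(Q)^d$ follows by Aubin--Lions from an equation-derived bound on $\partial_t\bv$; strong convergence of $\phi$ in $L^2(Q)$ follows analogously. Strong convergence of $\tilde\psi$ needs more care: for each compact sub-interval of $\R_0$ Aubin--Lions gives strong convergence of $\psi$ in $L^1(Q)$, and the growth condition \eqref{growthgamma} combined with $\theta_1^*>\theta$ in \eqref{T:6} controls the tail of $\gamma(r)\psi$ uniformly, yielding strong convergence of $\tilde\psi$ in $L^1(Q)$ and hence a.e.\ along a subsequence. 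The viscous stress then passes to the limit by a Minty-type argument: since $p$ may be strictly below $2$, the natural test function $\bv$ is not admissible in the limit of the Galerkin identity, and one has to invoke the Lipschitz-truncation method valid for $p>\tfrac{2d}{d+2}$ to identify $\bar{\bS}=\bS(\tilde\psi,\bD_x\bv)$. Pressure reconstruction is then standard (de Rham with a Bogovskii-type estimate).

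\textbf{Main obstacles.} Two points are genuinely delicate. First, obtaining strong pointwise convergence of $\tilde\psi$ is unavoidable, since the viscosity depends nonlinearly on it; this is what forces the weighted $L^2_3$-bound and the assumption $\theta_1^*>\theta$. Second, the drift $\tau(r)\phi\,\partial_r\psi$ involves the product of the only weakly convergent $\partial_r\psi$ with the strongly convergent $\phi$; integration by parts in $r$ shifts the difficulty to boundary terms at $r=r_0$, handled by $\tau(r_0)=0$ from (A2), and at $r=\infty$, handled by the weighted $L^1_{\theta_1^*}$-bound together with the cut-off limit $\eta\to 0$. Reconciling these two strong-convergence ingredients with the Lipschitz-truncation treatment of the monotone stress is the technical core of the proof.
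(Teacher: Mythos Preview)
Your overall plan is close to the paper's, but two steps are understated to the point of being gaps, and the approximation scheme differs in a way that matters.

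\textbf{The $L^2_3$ estimate.} Saying ``testing by $r^3\psi$ and using boundedness of $\beta$'' does not close. With weight $\alpha(r)$ the fragmentation contribution is
\[
Y=-\int_{r_0}^\infty \alpha\beta\psi^2\,\dd r+2\int_{r_0}^\infty \alpha\psi\int_r^\infty\frac{\beta(\tilde r,\cdot)\psi(\tilde r)}{\tilde r}\,\dd\tilde r\,\dd r,
\]
and the paper's key device (its Lemma on parabolic--hyperbolic estimates) is to introduce $I(r)=r^2\int_r^\infty\beta\psi/\tilde r\,\dd\tilde r$ and rewrite
\[
Y=-\int\frac{\alpha}{r^2\beta}|\partial_r I|^2-\int\partial_r\!\Big(\frac{\alpha}{r^3\beta}\Big)I^2-\frac{\alpha(r_0)}{r_0^3\beta(r_0,\cdot)}I^2(r_0),
\]
so $Y\le 0$ \emph{provided} $\alpha/(r^3\beta)$ is nondecreasing in $r$. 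Since (A3) makes $\beta$ increasing in $r$, the naive $\alpha=r^3$ fails; one must take $\alpha(r)=r^3\exp\big(\int_{r_0}^r\eta\big)$ with $\eta$ from \eqref{dfeta}, and this is precisely where the structural hypothesis \eqref{eta-as} enters. There is no uniform lower bound on $\beta$ in (A3), so a crude Cauchy--Schwarz bound on the cross term does not work either.

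\textbf{Compactness of $\psi$.} The assertion ``for each compact sub-interval of $\R_0$ Aubin--Lions gives strong convergence of $\psi$ in $L^1(Q)$'' is, read literally, false: the limit equation for $\psi$ is hyperbolic in $r$ (no $r$-diffusion survives), so there is no compactness in the $r$-variable. What does work, and what the paper does, is to fix $z\in\mathcal{D}(r_0,\infty)$, derive an evolution equation in $(t,x)$ for the moment $\tilde\psi_z(t,x):=\int z(r)\psi(t,x,r)\,\dd r$, and apply Aubin--Lions in $(t,x)$ only to get $\tilde\psi_z^n\to\tilde\psi_z$ strongly in $L^2(Q)$; the $r^{\theta_1^*}$-moment bound then handles the tail, yielding $\tilde\psi^n\to\tilde\psi$ in $L^1(Q)$.

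\textbf{On the scheme.} The paper does not mollify $\bv$; it adds $\varepsilon|\bv|^{2p'-2}\bv$ to the momentum equation (so $\bv\in L^{2p'}$ is an admissible test function, giving an energy \emph{equality} used to identify $\overline{\bS}$ at the intermediate level) and $-\delta\partial_{rr}\psi+\delta|\psi|^{2p-2}\psi$ to the $\psi$-equation on a bounded interval $(r_0,r_\infty)$. The $\delta\partial_{rr}\psi$ term is what lets one rigorously integrate by parts in $r$ in the weighted estimates above; your scheme has no analogue, so those manipulations would have to be justified differently. The limits are taken as $(\delta,r_\infty)\to(0,\infty)$ first and $\varepsilon\to 0_+$ last, with Lipschitz truncation invoked only at the final step.
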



A key difficulty in the mathematical analysis of the problem under consideration is that the partial differential equation (\ref{eq:psi}), governing the evolution of the distribution of polymer chains, is a hyperbolic equation with respect to the variable $r$ with a nonlocal term, which is nonlinearly coupled to the evolution equations (\ref{NS}) and (\ref{eq:phi}).

\section{Analytical framework}
An essential part of the existence proof in the fluid part of the problem relies on Lipschitz approximations of Bochner functions taking values in Sobolev spaces.
We recall from~\cite{BuGwMaSw2012} the following lemma, which collects the properties of the approximation in a simplified setting
(omitting the generality of Orlicz spaces used in~\cite{BuGwMaSw2012}).

\begin{Lemma}\label{Lip-Lem}
Let $\Omega \subset \mathbb{R}^d$, $d \in \{2,3\}$, be a bounded open set, let
$T>0$ be the length of the time interval and suppose that $\frac{1}{p}+\frac{1}{p'}=1$, with $p \in (1,\infty)$.
For any function  $\bH$
and arbitrary sequences  $\{\bu^n\}_{n=1}^\infty$ and
$\{\bH^n\}_{n=1}^{\infty}$, we consider
$$a^n:=|\bH^n| +|\bH|\quad{\rm and}\quad b^n:= |\bD_x \bu^n|,$$
   such that, for a certain $C^*>1$,
\begin{equation}
\begin{split}
&\int_{Q} |a^n|^{p'}+|b^n|^p\dd x \dd t + \mbox{\rm ess.sup}_{t\in (0,T)}\|\bu^n(t)\|_2^2 \le C^*,\\
&\bu^n \to \b0 \quad \textrm{ a.e. in } Q:=(0,T)\times \Omega. \label{A1}
\end{split}
\end{equation}
In addition, let $\{\bG^n\}_{n=1}^{\infty}$ and $\{\bef^n\}_{n=1}^\infty$ be such that $\bG^n$ is symmetric and
\begin{alignat}{2}
\bG^n & \to \b0 \qquad&&\textrm{strongly in } L^1(Q)^{d\times d}, \label{bG}\\
\bef^n & \to \b0 \qquad&&\textrm{strongly in } L^1(Q)^d, \label{bf}
\end{alignat}
and suppose that the following identity holds in $\mathcal{D}'(Q)^{d}$:
\begin{equation}
\partial_t\bu^n + \diver_x (\bH^n-\bH + \bG^n) = \bef^n. \label{distr}
\end{equation}
Then, there exists a $\beta>0$ such that, for arbitrary $Q_h
\subset \subset Q$ and for arbitrary
 $\lambda^*\in(p^{\frac{1}{p-1}},\infty)$  and arbitrary $k\in
\mathbb{N}$,  there exists a sequence of
$\{\lambda^n_k\}_{n=1}^\infty$, a sequence of open sets
$\{E^n_k\}_{n=1}^\infty$, $E^n_k\subset Q$, and a sequence
$\{\bu^{n,k}\}_{n=1}^{\infty}$ bounded in
$L^\infty_{loc}(0,T;W^{1,\infty}_{loc}(\Omega)^d)$, such that, for any $1\le s<\infty$,
\begin{alignat}{2}
\lambda^n_k &\in \bigg[\lambda^*, p^{-\frac{p^k-1}{p-1}} (\lambda^*)^{p^k}\bigg], \qquad&&\forall n\in \mathbb{N},\label{lambda}\\
\bu^{n,k} &\to \b0 \qquad&&\textrm{strongly in } L^{ s}(Q_h)^d, \label{strong}\\
 \|\bD_x(\bu^{n,k})\|_{L^\infty (Q_h)}&\le C(h,\Omega) \lambda^n_k,\label{norm}\\
\bu^{n,k}&=\bu^n \qquad&&\textrm{in } Q_h\setminus E^n_k, \label{equalit}\\
\limsup_{n\to \infty}|Q_h\cap E^n_k|&\le C(h,\Omega) \frac{C^*}{(\lambda^*)^p}. \qquad &&~ \label{meas}
\end{alignat}
Moreover,
\begin{align}
&\limsup_{n\to \infty} \int_{Q_h\cap E^n_k}\!\!\!\!\!\!\!
\left(|\bH^n|+|\bH|\right) |\bD_x(\bu^{n,k}) |\dd x \dd t
\le C(h,C^*)\left (\frac{1}{(\lambda^*)^{p-1}} + \frac{1}{k^\beta}\right),\label{fest}
\end{align}
and the following bound holds for all $g\in \mathcal{D}(Q_h)$:
\begin{align}
&-\liminf_{n\to \infty} \int_0^T \langle \partial_t\bu^n, \bu^{n,k}
g \rangle \dd t \le C(g,h,C^*)\left
(\frac{1}{(\lambda^*)^{p-1}} + \frac{1}{k}\right)^\beta.
\label{tdercon}
\end{align}
\end{Lemma}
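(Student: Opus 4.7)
The strategy is the parabolic Lipschitz truncation method of Kinnunen--Lewis and Diening--M\'alek--Steinhauer, refined by a layered pigeonhole over $k$ candidate thresholds as packaged in~\cite{BuGwMaSw2012}. For each $\lambda>0$ I would set the bad set $O(\lambda):=\{(t,x)\in Q\,:\,\M(\chi_Q(|\bD_x\bu^n|+|\bu^n|))(t,x)>\lambda\}$, where $\M$ is the Hardy--Littlewood maximal operator on parabolic cylinders $(t-r^2,t+r^2)\times B_r(x)$. On $Q\setminus O(\lambda)$ the field $\bu^n$ is essentially Lipschitz with constant $\lesssim\lambda$, so a Whitney decomposition of $O(\lambda)$ together with a subordinate smooth partition of unity yields the truncation $\bu^{n,\lambda}$ satisfying (\ref{norm}) and (\ref{equalit}). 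The bound (\ref{meas}) follows from the weak-$(p,p)$ inequality applied to $\bD_x\bu^n\in L^p$, while (\ref{strong}) comes from the a.e.\ convergence in (\ref{A1}) combined with the uniform $L^\infty(0,T;L^2)$ control there.

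To realize (\ref{lambda}) and (\ref{fest}), I would introduce $k$ dyadic thresholds $\lambda_1^*:=\lambda^*$ and $\lambda_{j+1}^*:=p^{-1}(\lambda_j^*)^p$; their terminal value matches the upper bound in (\ref{lambda}). On each annulus $A^n_j:=O(\lambda_j^*)\setminus O(\lambda_{j+1}^*)$ one has the elementary estimate $\int_{A^n_j}(|\bH^n|+|\bH|)\,|\bD_x\bu^{n,\lambda_j^*}|\lesssim \lambda_j^*|A^n_j|^{1/p}\,\|(|\bH^n|+|\bH|)\|_{p'}$. Since $\sum_{j=1}^k |A^n_j|$ telescopes to a uniformly bounded quantity, a pigeonhole argument over the $k$ layers selects an index $j^*\in\{1,\dots,k\}$ whose layer contribution is of order $1/k^\beta$ for a structural exponent $\beta>0$; coupling this with the direct estimate $O((\lambda^*)^{1-p})$ on the outermost layer yields (\ref{fest}) upon setting $\lambda^n_k:=\lambda_{j^*}^*$.

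The principal obstacle is the time-derivative estimate (\ref{tdercon}), because $\bu^{n,k}$ is a nonlinear truncation that does not individually inherit a PDE from (\ref{distr}). I would handle it by exploiting the local Whitney structure: on each Whitney cylinder $\bu^{n,k}$ is an affine average of $\bu^n$, so $\bu^{n,k}g$ is admissible as a test field in (\ref{distr}) after a mild temporal mollification. Integration by parts then splits $\int\langle\partial_t\bu^n,\bu^{n,k}g\rangle\dd t$ into (i) a boundary-in-time piece dominated by $\|\bu^{n,k}\|_2^2\|\partial_t g\|_\infty$, which vanishes via (\ref{strong}); (ii) a spatial pairing $\int (\bH^n-\bH+\bG^n):\nabla_x\bu^{n,k}\,g\,\dd x\dd t$ that is $o(1)$ off $E^n_k$ by (\ref{bG}) and the weak convergence of $\bH^n$, and on $E^n_k$ is estimated exactly as in (\ref{fest}) through the layered pigeonhole; and (iii) a lower-order $\bef^n$ contribution controlled by (\ref{bf}). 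Reassembling over the finite-overlap Whitney covering and letting $n\to\infty$ delivers the stated bound $((\lambda^*)^{1-p}+1/k)^\beta$.
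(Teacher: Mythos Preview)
The paper does not prove this lemma at all: it is stated without proof and explicitly attributed to~\cite{BuGwMaSw2012}, with the text ``We recall from~\cite{BuGwMaSw2012} the following lemma, which collects the properties of the approximation in a simplified setting (omitting the generality of Orlicz spaces used in~\cite{BuGwMaSw2012}).'' There is therefore no in-paper proof to compare against.

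That said, your outline is recognisably the parabolic Lipschitz-truncation machinery that underlies the cited reference: the maximal-function bad set, the Whitney covering with subordinate partition of unity, and the $k$-layer pigeonhole to select a good threshold $\lambda^n_k$ in the stated range. This is the correct architecture. A few points where your sketch is thinner than the actual argument in the cited work and could use care if you flesh it out: (i) the truncation is not merely an ``affine average'' on Whitney cylinders but a carefully weighted combination using the partition of unity, and the parabolic scaling of the cylinders is essential for controlling both $\bD_x\bu^{n,k}$ and the interaction with $\partial_t$; (ii) in the time-derivative estimate one does not simply ``test (\ref{distr}) with $\bu^{n,k}g$ after mollification'' --- the key device is to replace $\bu^{n,k}$ by $\bu^{n,k}-\bu^n$ plus $\bu^n$, use that $\bu^{n,k}=\bu^n$ off $E^n_k$, and then control the pairing $\langle\partial_t\bu^n,(\bu^{n,k}-\bu^n)g\rangle$ via the equation together with the smallness of $|E^n_k|$ and the Lipschitz bound on the Whitney pieces; the sign in (\ref{tdercon}) (a one-sided bound on $-\liminf$) reflects that one only obtains an inequality after discarding a nonnegative quadratic term in $\bu^n$; (iii) your pigeonhole bookkeeping should yield a good index whose layer contribution is $\le C/k$ (not $1/k^\beta$), with the exponent $\beta$ arising only after interpolation with the $L^{p'}$ bound on $a^n$.
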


\section{Uniform a~priori estimates}\label{apriori}
In this section, we shall formally derive the uniform estimates that will play a crucial role in the proof of existence of weak solutions to the problem. All of the statements below are valid for sufficiently smooth solutions. First, we recall a minimum principle. However, we will prove it for the following slightly modified problem (we shall only indicate the dependence of functions on
the variable $r$, except in cases when it is necessary to emphasize the dependence on the other independent variables as well):
\begin{align}\label{eq:psi-m}
\partial_t \psi+ \bv \cdot \nabla_x\psi  -A\Delta_x\psi&= -\beta\psi  -\tau \phi\partial_r\psi_++2\int_r^{\infty}\beta(\tilde r,\cdot)\kappa(r,\tilde r)\psi_+(\cdot,\tilde r) \dd\tilde r,\\
\label{eq:phi-m}
\partial_t \phi+\bv \cdot \nabla_x\phi
-A_0\Delta_x\phi &=-\phi\int_{r_0}^\infty\partial_r (r\tau)\psi_+ \dd r + 2 \int_0^{r_0} r \int_{r_0}^{\infty}\beta(\tilde r,\cdot)\kappa(r,\tilde r)\psi_+(\cdot,\tilde r) \dd\tilde r \dd r,
\end{align}
where $\psi_+:=\max(\psi,0)$.
\begin{Lemma}[Minimum principle for $\psi$ and $\phi$]\label{L-MP}
Let $\psi_0$ and $\phi_0$ be nonnegative; then, the functions $\psi$ and $\phi$ that solve the coupled system \eqref{eq:psi-m} and \eqref{eq:phi-m} are also nonnegative.
\end{Lemma}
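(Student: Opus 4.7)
The strategy is to test each of \eqref{eq:psi-m} and \eqref{eq:phi-m} against the negative part of its unknown and to exploit the key feature of the modified system: the right-hand sides depend on $\psi_+$ rather than on $\psi$. This ``freezing'' of the source terms decouples the two sign arguments, so the inequality for $\psi_- := \min(\psi,0)\le 0$ is independent of $\phi$, and the one for $\phi_-:=\min(\phi,0)\le 0$ does not require $\psi\ge 0$ at all.

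\textbf{Step 1 (nonnegativity of $\psi$).} I multiply \eqref{eq:psi-m} by $\psi_-$ and integrate over $\Omega\times\R_0$. The time derivative contributes $\tfrac{1}{2}\tfrac{\dd}{\dd t}\|\psi_-(t)\|_{L^2(\Omega\times\R_0)}^2$; the convective term vanishes by $\diver_x\bv=0$ together with $\bv\cdot\bn=0$; the diffusive term yields the nonnegative quantity $\int_{\R_0}\!\!\int_\Omega A(r)|\nabla_x\psi_-|^2\,\dd x\,\dd r$ after integration by parts, thanks to \eqref{BC-psi}; the damping gives $-\int\beta\psi\,\psi_-=-\int\beta\,\psi_-^2\le 0$; the term $\tau\phi\,\partial_r\psi_+\cdot\psi_-$ vanishes pointwise a.e., since by the Stampacchia chain rule $\partial_r\psi_+=\chi_{\{\psi>0\}}\partial_r\psi$ whereas $\psi_-\equiv 0$ on $\{\psi>0\}$; and the fragmentation source $2\int_r^\infty\beta\kappa\psi_+\,\dd\tilde r$ is nonnegative, so its product with $\psi_-\le 0$ is nonpositive. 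Collecting these contributions yields
$$\frac{1}{2}\frac{\dd}{\dd t}\|\psi_-(t)\|_{L^2(\Omega\times\R_0)}^2+\int_{\R_0}\!\!\int_\Omega A(r)|\nabla_x\psi_-|^2\,\dd x\,\dd r\le 0,$$
and since $\psi_0\ge 0$ implies $\psi_-(0)=0$, we obtain $\psi_-\equiv 0$, that is $\psi\ge 0$.

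\textbf{Step 2 (nonnegativity of $\phi$).} I test \eqref{eq:phi-m} against $\phi_-$ over $\Omega$. The time and convection terms, together with $\diver_x\bv=0$ and \eqref{BC-phi}, produce $\tfrac{1}{2}\tfrac{\dd}{\dd t}\|\phi_-(t)\|_2^2+A_0\|\nabla_x\phi_-\|_2^2$. Hypothesis (A2) guarantees $\partial_r(r\tau)=\tau+r\tau'\ge K^{-1}r_0\ge 0$, so the coefficient
$$X(t,x):=\int_{r_0}^\infty\partial_r(r\tau(r))\,\psi_+(t,x,r)\,\dd r$$
is nonnegative, and moving $-\phi X$ to the left yields the extra nonnegative contribution $\int_\Omega\phi_-^2\,X\,\dd x$ via the identity $\phi\phi_-=\phi_-^2$. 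The fragmentation integral $2\int_0^{r_0}r\int_{r_0}^\infty\beta\kappa\psi_+\,\dd\tilde r\,\dd r$ is nonnegative, so its product with $\phi_-\le 0$ is nonpositive. Altogether,
$$\frac{1}{2}\frac{\dd}{\dd t}\|\phi_-(t)\|_{L^2(\Omega)}^2\le 0,$$
and since $\phi_0\ge 0$ gives $\phi_-(0)=0$, we conclude $\phi\ge 0$.

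\textbf{Main difficulty.} At the formal level declared at the beginning of Section~\ref{apriori}, all manipulations above are legitimate, and the decoupling afforded by the $\psi_+$ source terms is what makes the proof clean. The only genuinely subtle point is the pointwise identity $\psi_-\partial_r\psi_+=0$ a.e., which relies on the Stampacchia chain rule for Sobolev truncations. To make the argument fully rigorous at the Galerkin level, one would replace $\min(\cdot,0)$ by a smooth nonincreasing regularisation $\eta_\varepsilon$ vanishing on $[0,\infty)$, run the computation with $\eta_\varepsilon(\psi)$ and $\eta_\varepsilon(\phi)$ as test functions, and pass to the limit $\varepsilon\to 0$.
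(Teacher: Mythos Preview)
Your proof is correct and follows essentially the same approach as the paper: test \eqref{eq:psi-m} with $\psi_-$, use $\diver_x\bv=0$, $\bv\cdot\bn=0$, the Neumann condition, the pointwise identity $\psi_-\partial_r\psi_+=0$, and the sign of the $\psi_+$-source to obtain $\tfrac{\dd}{\dd t}\|\psi_-\|_2^2\le 0$, then argue analogously for $\phi$. Your Step~2 is in fact more detailed than the paper, which merely says ``using a similar procedure'' for $\phi$; your explicit use of (A2) to ensure $\partial_r(r\tau)\ge 0$ and hence $X\ge 0$ is exactly what makes that step work.
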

It directly follows from Lemma \ref{L-MP} that we can replace $\psi_+$ by $\psi$ and then \eqref{eq:psi-m}, \eqref{eq:phi-m} reduce to \eqref{eq:psi}, \eqref{eq:phi}.
\begin{proof}
We test the equation \eqref{eq:psi-m} by $\psi_{-}$, where $\psi_{-}=\min(\psi,0)$ and integrate with respect to $x$ and $r$. Using the Neumann boundary condition on $\psi$ we get, after integration by parts, that
\begin{equation}\label{L1:E1}
\begin{split}
&\frac12 \frac{\dd}{\dd t} \|\psi_{-}(t)\|^2_2+\int_{\Omega}\int_{r_0}^{\infty}\left(A(\cdot)|\nabla \psi_{-}|^2 +\beta(r,\cdot)|\psi_{-}|^2\right)\dd r\dd x \\
&=\frac12\int_{\Omega}\int_{r_0}^{\infty} (\diver_x \bv) |\psi_{-}|^2+\tau \phi \psi_{-}\partial_r\psi_+  \dd r \dd x +2\int_\Omega\int_{r_0}^{\infty}\psi_{-}\int_r^{\infty}\beta(\tilde r,\cdot)\kappa(r,\tilde r)\psi_+(\cdot,\tilde r) \dd\tilde r\dd r \dd x,
\end{split}
\end{equation}
where we have used the fact that $\bv \cdot \bn =0$ on $\partial \Omega$. Then, the first term on the right-hand side is identically zero since $\diver_x \bv =0$. The same is true for the second term on the right-hand side since $\psi_{-}\partial_r \psi_+ = 0$. Finally, since $\beta$ is nonnegative, we see that the last term is nonpositive. Consequently, we get
$$
 \frac{\dd}{\dd t} \|\psi_{-}(t)\|^2_2\le 0,
$$
and since we have assumed that $\psi_0\ge 0$, we deduce that $\psi\ge 0$ almost everywhere. Using a similar procedure, we obtain an identical result also for $\phi$.
\end{proof}
Our next estimate is a maximum principle for $\phi$.
\begin{Lemma}[Maximum principle for $\phi$]\label{L-Max}
There exists a constant $C$, depending only on $K$, such that, if $\phi_0 \in L^{\infty}(\Omega)$, then
\begin{equation}
\mbox{\rm ess.sup}_{t \in (0,T)}\|\phi(t)\|_{\infty} \le \max(K^2,\|\phi_0\|_{\infty}).\label{L2:E1}
\end{equation}
\end{Lemma}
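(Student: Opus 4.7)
The plan is to prove the maximum principle via a standard level-set test: set $M := \max(K^2, \|\phi_0\|_\infty)$ and test the equation for $\phi$ against $(\phi - M)_+$. Because $\phi_0 \le M$, we have $(\phi(0) - M)_+ = 0$, so it will suffice to show that $\frac{d}{dt} \|(\phi - M)_+\|_2^2 \le 0$.

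First I would handle the transport, diffusion, and time-derivative terms. Using $\diver_x \bv = 0$ together with $\bv \cdot \bn = 0$ on $\partial\Omega$, the convective term $\int_\Omega \bv \cdot \nabla_x\phi \,(\phi-M)_+ \dd x = \frac{1}{2}\int_\Omega \bv \cdot \nabla_x (\phi-M)_+^2 \dd x$ vanishes after integration by parts. The homogeneous Neumann condition \eqref{BC-phi} turns $-A_0\int_\Omega \Delta_x\phi \,(\phi-M)_+\dd x$ into the nonnegative term $A_0 \|\nabla_x (\phi-M)_+\|_2^2$. Consequently,
\begin{equation*}
\frac12 \frac{\dd}{\dd t}\|(\phi-M)_+\|_2^2 + A_0\|\nabla_x(\phi-M)_+\|_2^2 \le \int_\Omega \mathrm{RHS}(t,x)\,(\phi-M)_+(t,x)\dd x.
\end{equation*}

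Next I would show that the integrand on the right is pointwise nonpositive on the set $\{\phi > M\}$. By Lemma~\ref{L-MP}, $\phi,\psi \ge 0$. Using assumption (A2) in the form $\partial_r(r\tau(r)) = \tau(r) + r\tau'(r) \ge K^{-1} r_0$, the loss term is bounded above by $-K^{-1}r_0\,\phi \int_{r_0}^\infty \psi \dd r$. For the gain term, the explicit form \eqref{df-kappa} of $\kappa$ yields
\begin{equation*}
2\int_0^{r_0}\!\! r \int_{r_0}^\infty \beta(\tilde r,\bv,\bD_x\bv)\kappa(r,\tilde r)\psi(\tilde r)\dd\tilde r \dd r = r_0^2 \int_{r_0}^\infty \frac{\beta(\tilde r,\bv,\bD_x\bv)}{\tilde r}\psi(\tilde r)\dd\tilde r \le K r_0 \int_{r_0}^\infty \psi\dd r,
\end{equation*}
where I used $\beta \le K$ from (A3) and $\tilde r > r_0$. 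Combining these two estimates,
\begin{equation*}
\mathrm{RHS} \le r_0\bigl(K - K^{-1}\phi\bigr)\int_{r_0}^\infty \psi \dd r.
\end{equation*}
On $\{\phi > M\} \subset \{\phi > K^2\}$, the factor $K - K^{-1}\phi$ is strictly negative, and $(\phi-M)_+ > 0$, so the integrand is nonpositive there and vanishes elsewhere.

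Thus $\frac{d}{dt}\|(\phi-M)_+\|_2^2 \le 0$, and from $(\phi(0) - M)_+ \equiv 0$ we conclude $(\phi(t) - M)_+ = 0$ for a.e.~$t \in (0,T)$, yielding \eqref{L2:E1}. The only delicate point is the degenerate case $r_0 = 0$: here both the loss and the gain contribution vanish from the estimate above, but in fact the gain term itself is identically zero (integration over an empty interval), so the inequality $\mathrm{RHS} \le 0$ holds trivially without needing the threshold $K^2$. The only other subtlety is making the formal test rigorous on the regularity class in which the theorem is ultimately proved, but this is a routine density/approximation argument (and the lemma is stated only for sufficiently smooth solutions, as announced at the start of Section~\ref{apriori}).
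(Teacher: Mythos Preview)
Your argument is correct and follows essentially the same route as the paper: bound the right-hand side of \eqref{eq:phi} pointwise by $-K^{-1}r_0(\phi-K^2)\int_{r_0}^\infty\psi\,\dd r$ via (A2), (A3) and \eqref{df-kappa}, then test with $(\phi-M)_+$ and use the Neumann condition and $\diver_x\bv=0$ to kill the diffusion and transport terms. The only cosmetic difference is that the paper bounds the gain term a bit more sharply (obtaining $\tfrac{Kr_0}{2}$ rather than $Kr_0$) before relaxing to the threshold $K^2$, whereas you land directly on $K^2$.
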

\begin{proof}
We begin the proof with a pointwise bound on the right-hand side of \eqref{eq:phi}. Using the nonnegativity of $\psi$ and $\phi$ (Lemma~\ref{L-MP}) and the assumptions on $\tau$ and $\beta$ and the definition of $\kappa$, we observe that
$$
\begin{aligned}
&-\phi\int_{r_0}^\infty\partial_r (r\tau)\psi \dd r + 2 \int_0^{r_0} r \int_{r_0}^{\infty}\beta(\tilde r,\cdot)\kappa(r,\tilde r)\psi(\tilde r) \dd\tilde r \dd r \\
&\qquad \le -\phi\int_{r_0}^\infty (\tau + r\partial_r \tau)\psi \dd r + K\int_0^{r_0} r \int_{r_0}^{\infty}\frac{\psi(\tilde r)}{\tilde r} \dd\tilde r \dd r\\
&\qquad \le -K^{-1}r_0\phi\int_{r_0}^\infty \psi \dd r + K\int_0^{r_0} r \int_{r_0}^{\infty}\frac{\psi(\tilde r)}{r_0} \dd\tilde r \dd r\\
&\qquad = -K^{-1}r_0\phi\int_{r_0}^\infty \psi \dd r + \frac{Kr_0}{2}\int_{r_0}^{\infty}\psi \dd r\\
&\qquad \le  -K^{-1}r_0\left(\int_{r_0}^\infty \psi(r) \dd r\right) \left(\phi - K^2\right)\\
&\qquad \le  -K^{-1}r_0\left(\int_{r_0}^\infty \psi(r) \dd r\right) \left(\phi - M\right),
\end{aligned}
$$
where $M$ is defined as $M:=\max(K^2, \|\phi_0\|_{\infty})$. Hence, by multiplying \eqref{eq:phi-m} with $(\phi-M)_+$ and integrating over $\Omega$, we get (using integration by parts, the Neumann data and the fact that the velocity is a solenoidal function) that
$$
\frac{\dd}{\dd t}\|(\phi-M)_+\|_2^2\le 0.
$$
Consequently, we immediately arrive at \eqref{L2:E1}.
\end{proof}
The next result concerns conservation of mass.
\begin{Lemma}[Conservation of mass]\label{L-mass}
Let the pair of functions $(\psi,\phi)$ be a solution to \eqref{eq:psi}, \eqref{eq:phi}; then, the following identity holds:
\begin{equation}\label{con_mass}
\frac{\dd}{\dd t}\left[\int_\Omega\phi(t,x) \dd x+\int_{r_0}^\infty r\int_\Omega\psi(t,x,r) \dd x \dd r  \right]=0.
\end{equation}
Consequently, for a.a. $t\in (0,T)$,
\begin{equation}\label{A-est-1}
 \int_\Omega\phi(t,x) \dd x+\int_{r_0}^\infty r\int_\Omega\psi(t,x,r) \dd x \dd r  = \int_\Omega\phi_0(x) \dd x+\int_{r_0}^\infty r\int_\Omega\psi_0(x,r) \dd x \dd r=:E_0.
\end{equation}
\end{Lemma}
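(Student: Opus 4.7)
The plan is to derive two scalar identities (one from the $\psi$-equation, one from the $\phi$-equation), add them, and see that all source/sink terms cancel. In practice I would test \eqref{eq:psi} with the weight $r$ and integrate over $\Omega\times\R_0$, and test \eqref{eq:phi} with $1$ and integrate over $\Omega$. For both equations the convective terms $\bv\cdot\nabla_x(\cdot)$ integrate to zero thanks to the incompressibility $\diver_x\bv=0$ combined with $\bv\cdot\bn=0$ on $\partial\Omega$, and the spatial diffusion terms vanish by the homogeneous Neumann conditions \eqref{BC-phi} and \eqref{BC-psi} (the factor $A(r)$ is independent of $x$ so it passes through the spatial integration without trouble).

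The only genuinely nontrivial manipulation comes from the polymerization flux in $r$ and the nonlocal fragmentation operator. For the first, I would integrate $\int_{r_0}^\infty r\,\tau(r)\,\phi\,\partial_r\psi\,\dd r$ by parts in $r$: the boundary term at $r=r_0$ vanishes because $\tau(r_0)=0$ (assumption (A2)), and the boundary term at $r=\infty$ vanishes by \eqref{BCr-psi} together with the boundedness of $\tau$, yielding $-\int_\Omega\phi\int_{r_0}^\infty \partial_r(r\tau)\psi\dd r\dd x$. This is precisely the negative of the corresponding term appearing in the $\phi$-equation after integration over $\Omega$, so these polymerization contributions cancel when the two identities are added.

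For the fragmentation terms, I would swap the order of integration in
\[
2\int_\Omega\int_{r_0}^\infty r\!\int_r^\infty\!\beta(\tilde r,\cdot)\kappa(r,\tilde r)\psi(\tilde r)\dd\tilde r\dd r\dd x \;+\; 2\int_\Omega\int_0^{r_0}\!r\!\int_{r_0}^\infty\!\beta(\tilde r,\cdot)\kappa(r,\tilde r)\psi(\tilde r)\dd\tilde r\dd r\dd x,
\]
so that both pieces combine into a single $\tilde r$-outer integral with inner range $r\in(0,\tilde r)$. Applying \eqref{kappa} with $\alpha=2$ gives $\int_0^{\tilde r}r\kappa(r,\tilde r)\dd r=\tilde r/2$, so the combined fragmentation contribution equals $\int_\Omega\int_{r_0}^\infty \tilde r\,\beta(\tilde r,\cdot)\psi(\tilde r)\dd\tilde r\dd x$. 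This exactly cancels the loss term $-\int_\Omega\int_{r_0}^\infty r\beta\psi\dd r\dd x$ coming from multiplying $-\beta\psi$ by $r$.

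The main obstacle, and really the only one, is the interplay of the two fragmentation integrals: one must notice that the split $(0,r_0)\cup(r_0,\tilde r)$ of the $r$-range in the $\phi$- and $\psi$-contributions together reconstitute the full integral $\int_0^{\tilde r}r\kappa(r,\tilde r)\dd r$ for which the kernel identity \eqref{kappa} applies. Once this observation is made, the rest is routine: adding the two identities gives $\frac{\dd}{\dd t}\!\left[\int_\Omega\phi\dd x+\int_{r_0}^\infty r\int_\Omega \psi\dd x\dd r\right]=0$, and integrating in time from $0$ to $t$ yields \eqref{A-est-1}. The manipulations are formal in this section, as stated in the introduction to the \emph{a priori} estimates; their rigorous justification will ultimately follow from the regularity of the approximate solutions constructed later.
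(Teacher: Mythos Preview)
Your proposal is correct and follows essentially the same route as the paper: integrate \eqref{eq:psi} and \eqref{eq:phi} over $\Omega$ (using $\diver_x\bv=0$, $\bv\cdot\bn=0$, and the Neumann data to kill transport and diffusion), weight the $\psi$-identity by $r$ and integrate by parts using $\tau(r_0)=0$ and the decay at infinity, then add and apply Fubini plus \eqref{kappa} with $\alpha=2$ to cancel the fragmentation terms. The key observation you single out---that the $r$-ranges $(0,r_0)$ and $(r_0,\tilde r)$ from the $\phi$- and $\psi$-contributions reassemble into $(0,\tilde r)$---is exactly the crux of the paper's argument as well.
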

\begin{proof}
First, we integrate~\eqref{eq:psi} over $\Omega$. Since $\div_x\bv=0$, the transport term is equal to $\div_x(\bv \psi)$ and then both the transport and diffusion terms vanish thanks to $\bv \cdot \bn = 0$ on $\partial \Omega$ and the zero Neumann boundary condition for $\psi$. Hence, we obtain
\begin{equation}\label{psi-cal}
\begin{split}
\partial_t \int_\Omega\psi&(t,x,r)\dd x
+ \int_\Omega \phi(t,x) \tau \partial_r\psi(t,x,r) \dd x
\\=& -\int_\Omega\beta(r, \cdot)\psi(t,x,r) \dd x +2\int_\Omega\int_r^{\infty}\kappa(r,\tilde r)\beta(\tilde r,\cdot)\psi(t,x,\tilde r)\dd \tilde r \dd x.
\end{split}
\end{equation}
In the next step we multiply~\eqref{psi-cal}  by $r$ and integrate over the interval $(r_0,\infty)$. Consequently,
\begin{equation}\label{psi-cal2}
\begin{split}
&\frac{\dd}{\dd t}\int_{r_0}^{\infty} r \int_\Omega\psi(t,x,r)\dd x \dd r
+ \int_{r_0}^{\infty}\int_\Omega r\tau  \phi(t,x) \partial_r\psi(t,x,r) \dd x \dd r\\
&\qquad = -\int_{r_0}^{\infty} \int_\Omega r\beta(r, \cdot)\psi(t,x,r) \dd x \dd r +2\int_{r_0}^{\infty}\int_\Omega r\int_r^{\infty}\kappa(r,\tilde r)\beta(\tilde r,\cdot)\psi(t,x,\tilde r)\dd\tilde r  \dd x \dd r.
\end{split}
\end{equation}
Finally, assuming that $\psi$ vanishes sufficiently quickly at infinity\footnote{At the level of these formal computations, it is assumed that $\psi$ vanishes sufficiently rapidly at infinity; the argument will be made rigorous in Section \ref{proof} by fixing the function space in which the tripe $(\bv, \psi, \phi)$,  understood as a weak solution to the problem, is sought.}, we can integrate by parts with respect to $r$ (note that since $\tau(r_0)=0$ the second boundary term also vanishes); this yields the identity
\begin{equation}\label{psi-cal3}
\begin{split}
&\frac{\dd}{\dd t}\int_{r_0}^{\infty} r \int_\Omega\psi(t,x,r)\dd x \dd r
- \int_{r_0}^{\infty}\int_\Omega \partial_r(r \tau)  \phi(t,x) \psi(t,x,r) \dd x \dd r\\
&\qquad = -\int_{r_0}^{\infty} \int_\Omega r\beta(r, \cdot)\psi(t,x,r) \dd x \dd r +2\int_{r_0}^{\infty}\int_\Omega r\int_r^{\infty}\kappa(r,\tilde r)\beta(\tilde r,\cdot)\psi(t,x,\tilde r)\dd\tilde r  \dd x \dd r.
\end{split}
\end{equation}
Next we integrate~\eqref{eq:phi} over $\Omega$. Similarly as above, after integration by parts,  the transport and the diffusion terms vanish and we get
\begin{equation}\label{phi-cal}
\begin{split}
\frac{\dd}{\dd t} \int_\Omega\phi(t,x)\dd x &=
-\int_\Omega\int_{r_0}^\infty \phi(t,x)\partial_r(r \tau)\psi(t,x,r)\dd r \dd x \\
&\quad + 2\int_{\Omega} \int_0^{r_0} r \int_{r_0}^{\infty}\beta(\tilde r,\cdot)\kappa(r,\tilde r)\psi(t,x,\tilde r)
\dd{\tilde r} \dd r \dd x.
\end{split}
\end{equation}
Moreover, using the fact that $\kappa(r,\tilde r)=0$ for $\tilde r <r_0$, we can rewrite the last term to obtain the identity
\begin{equation}\label{phi-cal1}
\begin{split}
\frac{\dd}{\dd t} \int_\Omega\phi(t,x)\dd x &=
-\int_\Omega\int_{r_0}^\infty \phi(t,x)\partial_r(r \tau)\psi(t,x,r)\dd r \dd x \\
&\quad + 2\int_{\Omega} \int_0^{r_0} r \int_{r}^{\infty}\beta(\tilde r,\cdot)\kappa(r,\tilde r)\psi(t,x,\tilde r)\dd\tilde r \dd r\dd x.
\end{split}
\end{equation}
Thus, by summing \eqref{psi-cal3} and \eqref{phi-cal1} and using Fubini's theorem, we have that
\begin{equation}\label{psi-cal4}
\begin{split}
&\frac{\dd}{\dd t}\left(\int_\Omega\phi(t,x)\dd x + \int_\Omega\int_{r_0}^{\infty} r\psi(t,x,r) \dd r\dd x\right)\\
&\qquad = - \int_\Omega \int_{r_0}^{\infty} r\beta(r, \cdot)\psi(t,x,r) \dd r  \dd x +2\int_\Omega \int_{0}^{\infty} r\int_r^{\infty}\kappa(r,\tilde r)\beta(\tilde r,\cdot)\psi(t,x,\tilde r)\dd\tilde r   \dd r \dd x.
\end{split}
\end{equation}
Finally, we evaluate the last term. Changing the order of integration and using the fact that $\kappa(r,\tilde r)=0$ for $\tilde r \le r_0$, we deduce that
$$
\begin{aligned}
&2\int_\Omega \int_{0}^{\infty} r\int_r^{\infty}\kappa(r,\tilde r)\beta(\tilde r,\cdot)\psi(t,x,\tilde r)\dd\tilde r   \dd r \dd x\\
&\qquad=2\int_\Omega \int_{0}^{\infty}\int_{0}^{\infty} \chi_{\tilde{r}\ge r} r\kappa(r,\tilde r)\beta(\tilde r,\cdot)\psi(t,x,\tilde r)\dd\tilde r   \dd r \dd x\\
&\qquad=2\int_\Omega\int_{0}^{\infty}\int_{0}^{\tilde r}  r\kappa(r,\tilde r)\beta(\tilde r,\cdot)\psi(t,x,\tilde r)\dd r \dd\tilde r    \dd x \\
&\qquad =2\int_\Omega\int_{r_0}^{\infty}\left(\int_{0}^{\tilde r}  r\kappa(r,\tilde r)\dd r\right)\beta(\tilde r,\cdot)\psi(t,x,\tilde r)\dd\tilde r    \dd x
\overset{\eqref{kappa}}{=}\int_\Omega\int_{r_0}^{\infty}\tilde r\beta(\tilde r,\cdot)\psi(t,x,\tilde r)\dd\tilde r    \dd x.
\end{aligned}
$$
Consequently, we see that the right-hand side of \eqref{psi-cal4} is identically zero, and we deduce \eqref{con_mass}.
\end{proof}

We shall now develop further bounds on the function $\phi$.

\begin{Lemma}[Parabolic regularity of $\phi$]\label{L-par-psi}
Let $\psi_0$, $\phi_0$ be nonnegative, $\phi_0\in L^{\infty}(\Omega)$ and $E_0 < \infty$ (cf. \eqref{A-est-1}); then,
\begin{equation}
\int_{Q}|\nabla \phi|^2\dd x \dd t \le C(\|\phi_0\|_{\infty},E_0,K,T).\label{L4-E1}
\end{equation}
\end{Lemma}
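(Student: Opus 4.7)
The plan is to test the equation \eqref{eq:phi} for $\phi$ with $\phi$ itself, integrate over $\Omega$, and exploit the a priori estimates already proved in Lemmas \ref{L-MP}--\ref{L-mass}. After integration by parts, the transport term
$\int_\Omega (\bv\cdot\nabla_x\phi)\,\phi\,\dd x$ vanishes because $\diver_x\bv=0$ in $\Omega$ and $\bv\cdot\bn=0$ on $\partial\Omega$, while the diffusion term together with the Neumann boundary condition \eqref{BC-phi} yields the dissipation $A_0\|\nabla_x\phi\|_2^2$. Hence, writing $\partial_r(r\tau)=\tau+r\tau'$, I would arrive at the identity
\begin{equation*}
\frac{1}{2}\frac{\dd}{\dd t}\|\phi(t)\|_2^2+A_0\|\nabla_x\phi(t)\|_2^2
= -\int_\Omega \phi^2(t,x)\int_{r_0}^\infty (\tau+r\tau')\,\psi(t,x,r)\,\dd r\,\dd x
+ 2\int_\Omega \phi(t,x)\int_0^{r_0}\!\!\! r\int_{r_0}^\infty\!\!\! \beta\kappa\,\psi\,\dd\tilde r\,\dd r\,\dd x.
\end{equation*}

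The first term on the right-hand side is nonpositive: by Lemma~\ref{L-MP} both $\phi$ and $\psi$ are nonnegative, and assumption (A2) ensures $\tau+r\tau'\ge K^{-1}r_0>0$. Thus it can be dropped (in fact it provides additional absorption, though it is not needed here). For the remaining term I would use: the uniform bound $\|\phi(t)\|_\infty \le M := \max(K^2,\|\phi_0\|_\infty)$ from the maximum principle (Lemma~\ref{L-Max}); the bound $\beta\le K$ from (A3); the definition \eqref{df-kappa} of $\kappa$, which gives $\kappa(r,\tilde r)\le 1/\tilde r \le 1/r_0$ on the relevant range; and the mass-conservation identity \eqref{A-est-1}, which in particular yields $\int_\Omega\int_{r_0}^\infty\psi(t,x,r)\,\dd r\,\dd x \le E_0/r_0$. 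Combining these, the gain term is controlled by
$$
2M\cdot K\cdot\frac{r_0^2}{2}\cdot\frac{1}{r_0}\cdot\frac{E_0}{r_0} \;=\; MKE_0,
$$
a constant depending only on $K$, $\|\phi_0\|_\infty$ and $E_0$.

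Finally, integrating in time over $(0,T)$ and using the $L^\infty$-bound again to dominate $\tfrac{1}{2}\|\phi(T)\|_2^2\ge 0$, I obtain
$$
A_0\int_Q|\nabla_x\phi|^2\,\dd x\,\dd t \le \tfrac{1}{2}\|\phi_0\|_2^2 + MKE_0\,T,
$$
which is the desired inequality \eqref{L4-E1}. I do not anticipate a genuine obstacle here: the nonlocal quadratic term is automatically of the right sign, and the remaining nonlocal fragmentation contribution is tamed by combining the already established $L^\infty$-bound on $\phi$ with the $L^1$-control on $\psi$ coming from mass conservation; the only point requiring a little care is to check that, in the rigorous approximation scheme of Section~\ref{proof}, this formal test function is admissible (at the level of a Galerkin or regularised solution $\phi$ will be smooth enough), but that is a routine technical verification rather than a substantive difficulty.
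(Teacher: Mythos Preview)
Your proof is correct and follows essentially the same route as the paper's: test \eqref{eq:phi} with $\phi$, drop the nonpositive polymerization term using (A2) and Lemma~\ref{L-MP}, and bound the fragmentation gain by combining $\beta\le K$, the explicit form of $\kappa$, the $L^\infty$ bound on $\phi$ from Lemma~\ref{L-Max}, and the mass bound \eqref{A-est-1}. The only cosmetic difference is that the paper first derives a pointwise bound $F\le K\int_{r_0}^\infty r\psi\,\dd r$ on the right-hand side before multiplying by $\phi$, whereas you multiply by $\phi$ first and then estimate each term; the resulting constants and the logic are the same.
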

\begin{proof}
Let us rewrite equation~\eqref{eq:phi} as follows:
\begin{equation}\label{eq-phi-F}
\partial_t\phi +\bv\cdot \nabla_x\phi-A_0\Delta\phi=F,
\end{equation}
where $F$ is the right-hand side of~\eqref{eq:phi}. First, we deduce a pointwise bound on $F$. Using the  minimum principle for $\psi$ in conjunction with the properties of $\beta$ and $\tau$, we get
\begin{equation}
\begin{split}\label{oneside}
F(t,x)&=- \phi(t,x)\int_{r_0}^\infty\partial_r (r\tau)\psi(t,x,r)\dd r + 2 \int_0^{r_0} r \int_{r_0}^{\infty}\beta(\tilde r,\cdot)\kappa(r,\tilde r)\psi(t,x,\tilde r)\dd\tilde r \dd r\\
&\le 2 \int_0^{r_0} r \int_{r_0}^{\infty}\beta(\tilde r,\cdot)\kappa(r,\tilde r)\psi(t,x,\tilde r)\dd\tilde r \dd r\le 2K \int_0^{r_0} r \int_{r_0}^{\infty}\kappa(r,\tilde r)\psi(t,x,\tilde r)\dd\tilde r \dd r\\
&=2K \int_{r_0}^{\infty} \left(\int_0^{r_0} r \kappa(r,\tilde r)\dd r\right) \psi(t,x,\tilde r)\dd\tilde r \leq K\int_{r_0}^{\infty} r_0\psi(t,x,\tilde r)\dd\tilde r\\
&\le K\int_{r_0}^{\infty} r\psi(t,x, r)\dd r.
\end{split}
\end{equation}
Thus, upon multiplying \eqref{eq-phi-F} by $\phi$, integrating over $\Omega$, using the nonnegativity of $\phi$, partial integration (noting that all boundary terms again vanish) together with \eqref{A-est-1} and Lemma \ref{L-Max} give:
$$
\begin{aligned}
\frac12 \frac{\dd}{\dd t} \|\phi(t)\|_2^2 + A_0\|\nabla \phi(t)\|_2^2 &\le K\int_{\Omega}\phi(t,x)\int_{r_0}^{\infty} r\psi(t,x, r)\dd r\dd x\\
&\le  K\|\phi(t)\|_{\infty}\int_{\Omega}\int_{r_0}^{\infty} r\psi(t,x, r)\dd r\dd x\\
&\!\!\!\!\!\!\!\overset{\eqref{L2:E1},\eqref{A-est-1}}{\le} KE_0\, \max(K^2,\|\phi_0\|_{\infty}).
\end{aligned}
$$
Hence \eqref{L4-E1} directly follows.  We note in passing that we have not explicitly indicated the dependence of the constant $C(\|\psi_0\|_{\infty}, E_0,K,T)$ appearing in \eqref{L4-E1} on $A_0$ and $|\Omega|$.
\end{proof}
The next step is to establish a bound on high-order moments of the function $\psi$.
\begin{Lemma}[High-order moments of $\psi$]\label{L-psi-moment}
Let $\psi$ be nonnegative and suppose that it satisfies \eqref{eq:psi}; then, for all $\alpha \ge 0$,
\begin{equation}\label{moment:E1}
\mbox{{\rm ess.sup}}_{t\in (0,T)}\int_{\Omega}\int_{r_0}^{\infty}r^{\alpha}\psi(t,x,r)\dd r\dd x \le C(\|\phi_0\|_{\infty}, \alpha,K,T)\int_{\Omega}\int_{r_0}^{\infty}r^{\alpha} \psi_0(x,r) \dd r  \dd x.
\end{equation}
\end{Lemma}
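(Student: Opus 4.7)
The plan is to derive a differential inequality of the form $\frac{\dd}{\dd t} M_\alpha(t) \le C\, M_\alpha(t)$ for the moment
$$M_\alpha(t):=\int_\Omega\int_{r_0}^\infty r^\alpha \psi(t,x,r)\dd r\dd x,$$
and then apply Gronwall's lemma. I would multiply \eqref{eq:psi} by $r^\alpha$ and integrate over $\Omega\times(r_0,\infty)$. The transport term $\bv\cdot\nabla_x\psi$ is rewritten as $\diver_x(\bv\psi)$ using $\diver_x\bv=0$ and vanishes thanks to $\bv\cdot\bn=0$; the spatial diffusion term integrates to zero using \eqref{BC-psi}. These are exactly the same steps already used in Lemma~\ref{L-mass}.

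For the polymerization term, I would integrate by parts in $r$:
$$\int_{r_0}^\infty r^\alpha \tau(r)\phi\,\partial_r\psi\dd r = -\int_{r_0}^\infty \bigl(\alpha r^{\alpha-1}\tau(r)+r^\alpha\tau'(r)\bigr)\phi\,\psi\dd r,$$
where the boundary term at $r=r_0$ vanishes because $\tau(r_0)=0$ and the one at $r=\infty$ vanishes by the formal assumption on decay of $\psi$ (which will be made rigorous in the actual construction). Using the bounds in (A2), namely $\tau(r)/r\le K$ and $\tau'(r)\le K$, I estimate $\alpha r^{\alpha-1}\tau(r)+r^\alpha\tau'(r)\le (1+\alpha)K\,r^\alpha$, so by Lemma~\ref{L-Max}, this polymerization contribution on the right-hand side is bounded from above by $(1+\alpha)K\,\max(K^2,\|\phi_0\|_\infty)\,M_\alpha(t)$.

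For the fragmentation part, combining $-\int r^\alpha\beta\psi$ with the double integral and applying Fubini's theorem with the explicit form \eqref{df-kappa} of $\kappa$ yields
$$-\int_\Omega\int_{r_0}^\infty r^\alpha\beta(r,\cdot)\psi\dd r\dd x+2\int_\Omega\int_{r_0}^\infty \beta(\tilde r,\cdot)\psi(\tilde r)\Bigl(\int_{r_0}^{\tilde r}\frac{r^\alpha}{\tilde r}\dd r\Bigr)\dd\tilde r\dd x.$$
Computing the inner integral gives $(\tilde r^{\alpha+1}-r_0^{\alpha+1})/((\alpha+1)\tilde r)\le \tilde r^\alpha/(\alpha+1)$, so the fragmentation contribution is majorized by $\bigl(\tfrac{2}{\alpha+1}-1\bigr)\int r^\alpha\beta\psi$, which is nonpositive when $\alpha\ge 1$ and bounded by $\tfrac{(1-\alpha)K}{\alpha+1}M_\alpha(t)$ when $0\le\alpha<1$ using $\beta\le K$.

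Collecting both estimates yields
$$\frac{\dd}{\dd t}M_\alpha(t)\le C(\alpha,K,\|\phi_0\|_\infty)\,M_\alpha(t),$$
and Gronwall's inequality on $(0,T)$ gives $M_\alpha(t)\le e^{CT}M_\alpha(0)$, which is \eqref{moment:E1}. The only delicate point is the fragmentation bookkeeping: one has to verify that the net mass gain from large-to-small splittings, measured with the weight $r^\alpha$, is dominated by the loss term $-\beta\psi$ up to a controllable factor of $M_\alpha$. The integration by parts with respect to $r$ is justified at the formal level by the assumption that $\psi$ decays sufficiently fast at infinity, in line with the footnote following \eqref{psi-cal2}.
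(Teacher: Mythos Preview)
Your proposal is correct and follows essentially the same approach as the paper's proof: multiply \eqref{eq:psi} by $r^\alpha$, integrate, kill the transport and diffusion terms via the boundary conditions, integrate the polymerization term by parts using $\tau(r_0)=0$ and the bounds in (A2), handle the fragmentation term by Fubini and the explicit form of $\kappa$ to obtain the factor $\frac{1-\alpha}{\alpha+1}$, and conclude with Gronwall using Lemma~\ref{L-Max}. The only cosmetic difference is that the paper keeps the exact identity $\int_{r_0}^{\tilde r}\frac{r^\alpha}{\tilde r}\dd r=\frac{\tilde r^\alpha}{\alpha+1}-\frac{r_0^{\alpha+1}}{(\alpha+1)\tilde r}$ (displaying the extra nonnegative term on the left-hand side) whereas you bound it above by $\frac{\tilde r^\alpha}{\alpha+1}$ directly, which comes to the same thing.
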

\begin{proof}
We first integrate \eqref{eq:psi} over $\Omega$ to get
$$
\begin{aligned}
\partial_t \int_{\Omega}\psi(t,x,r)\dd x&= -\int_{\Omega}\beta(r,\cdot)\psi(t,x,r)\dd x  +2\int_{\Omega}\int_r^{\infty}\beta(\tilde r,\cdot)\kappa(r,\tilde r)\psi(t,x,\tilde r)\dd\tilde r\dd x\\
&\qquad  -\int_{\Omega}\tau(r) \phi(t,x)\partial_r\psi(t,x,r) \dd x.
\end{aligned}
$$
We then multiply the result by $r^{\alpha}$, where $\alpha \ge 0$, and integrate over $(r_0,\infty)$ to deduce that
$$
\begin{aligned}
&\frac{\dd}{\dd t}\int_{\Omega}\int_{r_0}^{\infty}r^{\alpha}\psi(t,x,r)\dd r\dd x= -\int_{\Omega}\int_{r_0}^{\infty}r^{\alpha}\beta(r,\cdot)\psi(t,x,r)\dd r\dd x  \\
&\quad +2\int_{\Omega}\int_{r_0}^{\infty}r^{\alpha}\int_r^{\infty}\beta(\tilde r,\cdot)\kappa(r,\tilde r)\psi(t,x,\tilde r)\dd\tilde r\dd r\dd x-\int_{\Omega}\int_{r_0}^{\infty}\tau(r)r^{\alpha} \phi(t,x)\partial_r\psi(t,x,r) \dd r \dd x.
\end{aligned}
$$
Next, we evaluate the second term on the right-hand side. Using Fubini's theorem, we arrive at
$$
\begin{aligned}
2&\int_{r_0}^{\infty}r^{\alpha}\int_r^{\infty}\beta(\tilde r,\cdot)\kappa(r,\tilde r)\psi(t,x,\tilde r)\dd\tilde r\dd r=2\int_{r_0}^{\infty}\int_{r_0}^{\infty}r^{\alpha}\chi_{r\le \tilde r}\beta(\tilde r,\cdot)\kappa(r,\tilde r)\psi(t,x,\tilde r) \dd\tilde r  \dd r\\
&=2\int_{r_0}^{\infty}\int_{r_0}^{\tilde r}r^{\alpha}\beta(\tilde r,\cdot)\kappa(r,\tilde r)\psi(t,x,\tilde r)\dd r \dd\tilde r\\
&=2\int_{r_0}^{\infty}\left(\int_{0}^{\tilde r}r^{\alpha}\kappa(r,\tilde r)\dd r \right)\beta(\tilde r,\cdot)\psi(t,x,\tilde r)\dd\tilde r-2\int_{r_0}^{\infty}\left(\int_{0}^{r_0}r^{\alpha}\kappa(r,\tilde r)\dd r \right)\beta(\tilde r,\cdot)\psi(t,x,\tilde r) \dd\tilde r\\
&\overset{\eqref{kappa}}=\frac{2}{\alpha+1}\int_{r_0}^{\infty}r^{\alpha}\beta( r,\cdot)\psi(t,x,r)\dd r -\frac{2r_0^{\alpha+1}}{\alpha+1}\int_{r_0}^{\infty}\frac{\beta( r,\cdot)\psi(t,x, r)}{r}\dd r.
\end{aligned}
$$
Hence, we have that
\begin{equation*}
\begin{aligned}
&\frac{\dd}{\dd t}\int_{\Omega}\int_{r_0}^{\infty}r^{\alpha}\psi(t,x,r)\dd r\dd x+\frac{2r_0^{\alpha+1}}{\alpha+1}\int_{\Omega}\int_{r_0}^{\infty}\frac{\beta( r,\cdot)\psi(t,x, r)}{ r}\dd r\dd x\\
&\quad = \frac{1-\alpha}{\alpha+1}\int_{\Omega}\int_{r_0}^{\infty}r^{\alpha}\beta(r,\cdot)\psi(t,x,r)\dd r\dd x -\int_{\Omega}\int_{r_0}^{\infty}\tau(r)r^{\alpha} \phi(t,x)\partial_r\psi(t,x,r)\dd r \dd x.
\end{aligned}
\end{equation*}
Using the nonnegativity of $\psi$, the boundedness of $\phi$ and integration by parts in the last integral,
\begin{equation*}
\begin{aligned}
\frac{\dd}{\dd t}\int_{\Omega}\int_{r_0}^{\infty}r^{\alpha}\psi(t,x,r)\dd r\dd x&\le \frac{1-\alpha}{\alpha+1}\int_{\Omega}\int_{r_0}^{\infty}r^{\alpha}\beta(r,\cdot)\psi(t,x,r)\dd r\dd x \\
&\qquad +\int_{\Omega}\int_{r_0}^{\infty}\partial_r(\tau(r)r^{\alpha}) \phi(t,x)\psi(t,x,r)\dd r  \dd x\\
&\le C(\alpha,K)(1+\|\phi(t)\|_{\infty})\int_{\Omega}\int_{r_0}^{\infty}r^{\alpha} \psi(t,x,r)\dd r  \dd x,
\end{aligned}
\end{equation*}
where we have used the assumptions on $\tau$ and $\beta$. Thus, \eqref{moment:E1} follows by using Gronwall's lemma.
\end{proof}
Now we can prove the desired bounds on $\psi$.
\begin{Lemma}[Parabolic-hyperbolic estimates for $\psi$]\label{key-lemma}
Let $\psi$ solve \eqref{eq:psi}. Then, for all $p\in [3,\infty)$, the following inequality holds:
\begin{equation}\label{Key-Est}
\begin{split}
&\mbox{{\rm ess.sup}}_{t\in (0,T)}\int_\Omega \int_{r_0}^\infty r^p\psi^2(t,x,r)\dd r \dd x +\int_Q \int_{r_0}^\infty r^pA(r)|\nabla_x\psi(t,x,r)|^2\dd r\dd x\dd t \\
&\qquad \le C(p,K,\|\phi_0\|_{\infty})\int_\Omega \int_{r_0}^\infty r^p\psi_0^2(x,r)\dd r \dd x.
\end{split}
\end{equation}
\end{Lemma}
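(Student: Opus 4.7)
This is a weighted $L^2$ energy estimate obtained by testing equation \eqref{eq:psi} against $r^p \psi$ and integrating over $\Omega\times\R_0$. Writing $Y(t):=\int_\Omega \int_{r_0}^\infty r^p \psi^2(t,x,r) \dd r \dd x$, the time derivative produces $\tfrac12 Y'(t)$, while the convective term $\bv\cdot \nabla_x\psi$ vanishes after integration by parts in $x$, using $\diver_x\bv=0$ together with $\bv\cdot\bn=0$. The diffusion term, combined with the Neumann condition \eqref{BC-psi}, puts the dissipative quantity $\int_\Omega\int_{r_0}^\infty A(r)r^p|\nabla_x\psi|^2\dd r\dd x$ on the left-hand side. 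The polymerization term is handled by integration by parts in $r$: the boundary contribution at $r=r_0$ vanishes since $\tau(r_0)=0$ and the one at infinity vanishes by \eqref{BCr-psi} (at the formal level). What remains is $-\tfrac12\int\partial_r(\tau r^p)\phi\psi^2$, which by $\partial_r(\tau r^p) = \tau'(r)r^p + p\,\tau(r) r^{p-1} \le K(p+1) r^p$ (from (A2)) and the $L^\infty$ bound on $\phi$ from Lemma~\ref{L-Max}, is controlled by $C(p,K,\|\phi_0\|_\infty)\,Y(t)$. The loss term $-\int \beta r^p\psi^2$ is nonpositive and is discarded.

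\textbf{Main obstacle: the fragmentation gain term}
$$\mathcal{N}(t) := 2\int_\Omega \int_{r_0}^\infty r^p \psi(r)\int_r^\infty \beta(\tilde r,\cdot)\kappa(r,\tilde r)\psi(\tilde r)\dd\tilde r\dd r\dd x.$$
Substituting $\kappa(r,\tilde r)=1/\tilde r$ and applying Young's inequality $2ab\le a^2+b^2$ gives
$$\mathcal{N}(t) \le Y(t) + \int_\Omega\int_{r_0}^\infty r^p \bigg(\int_r^\infty \frac{\beta(\tilde r)}{\tilde r}\,\psi(\tilde r)\dd\tilde r\bigg)^2 \dd r \dd x.$$
The key step is bounding the second term by a multiple of $Y(t)$. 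I would apply weighted Cauchy--Schwarz in $\tilde r$ by splitting $\beta\psi/\tilde r = (\beta/\tilde r^{b})(\tilde r^{b-1}\psi)$ with $b=(p+4)/4$. Using $\beta\le K$ yields
$$\bigg(\int_r^\infty \frac{\beta}{\tilde r}\psi\dd\tilde r\bigg)^2 \le \frac{K^2}{(p/2+1)\,r^{p/2+1}}\int_r^\infty \tilde r^{p/2}\psi^2\dd\tilde r.$$
Multiplying by $r^p$ and exchanging the order of integration via Fubini (the inner integral $\int_{r_0}^{\tilde r} r^{p/2-1}\dd r$ is bounded by $\tfrac{2}{p}\tilde r^{p/2}$) produces the bound $\tfrac{4K^2}{p(p+2)}\,Y(t)$. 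The critical point is the choice of the weight exponent $s := 2b-2 \in (0,p)$: the borderline value $s=p$ would generate a spurious $\ln(\tilde r/r_0)$ factor that could not be absorbed into $Y$.

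Collecting all contributions yields the differential inequality
$$\tfrac12 Y'(t) + \int_\Omega\int_{r_0}^\infty A(r) r^p|\nabla_x\psi|^2 \dd r\dd x \le C(p,K,\|\phi_0\|_\infty)\,Y(t),$$
and Gronwall's lemma, followed by integration in $t\in(0,T)$, delivers \eqref{Key-Est}. Only $Y(0) = \int_\Omega\int_{r_0}^\infty r^p \psi_0^2 \dd r\dd x$ enters on the right-hand side, because the polymerization contribution is controlled purely through the $L^\infty$ bound on $\phi$ (Lemma~\ref{L-Max}) and not through the moment bounds of Lemma~\ref{L-psi-moment}. The restriction $p\ge 3$ is not essential for the preceding manipulations (any $p>0$ suffices), but it matches the weighted spaces used in Theorem~\ref{T:theorem}.
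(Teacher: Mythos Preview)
Your argument is correct and is genuinely simpler than the paper's. The paper does not discard the loss term and bound the gain term separately; instead it combines them into a single expression
\[
Y(t,x)=-\int_{r_0}^\infty\alpha(r)\psi\Big(\beta\psi-2\int_r^\infty\frac{\beta(\tilde r)\psi(\tilde r)}{\tilde r}\,\dd\tilde r\Big)\dd r,
\]
introduces the auxiliary function $I(r)=r^2\int_r^\infty\beta(\tilde r)\psi(\tilde r)/\tilde r\,\dd\tilde r$, and rewrites $Y$ as a quadratic form in $I$ and $\partial_r I$. With the specific weight $\alpha(r)=r^3\gamma(r)\exp\big(\int_{r_0}^r\eta\big)$, where $\eta$ is the relative $r$-derivative of $\beta$ from assumption (A3), this quadratic form is shown to be \emph{nonpositive}, so the fragmentation terms contribute nothing to Gronwall. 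Your route---Young's inequality followed by weighted Cauchy--Schwarz in $\tilde r$ and Fubini---bypasses this machinery entirely, uses only the crude bound $\beta\le K$, never invokes the structural hypothesis \eqref{eta-as} on $\eta$, and in fact works for any weight exponent $p>0$ rather than $p\ge 3$. The price is that the gain term is merely bounded by $C\,Y(t)$ instead of absorbed; but since the polymerization term already forces a Gronwall step, nothing is lost for the stated lemma. The paper's approach would become relevant if $\beta$ were allowed to be unbounded, since then the loss term is needed to cancel the gain term; under (A3) as stated, your argument is both shorter and sharper in its hypotheses.
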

\begin{proof}
Let $\alpha(r)$ be an arbitrary nonnegative function. We multiply \eqref{eq:psi} by $\alpha(r)\psi(t,x,r)$ and integrate over $\Omega\times (r_0,\infty)$ to deduce that (note that the term containing $\bv$ again vanishes)
\begin{equation}\label{a}
\begin{split}
&\frac12 \frac{\dd}{\dd t}\int_\Omega \int_{r_0}^\infty\alpha(r)\psi^2(t,x,r)\dd r \dd x +\int_\Omega \int_{r_0}^\infty \alpha(r)A(r)|\nabla_x\psi(t,x,r)|^2\dd r\dd x
\\
&= -\int_\Omega \int_{r_0}^\infty \alpha(r)\beta(r, \cdot)\psi^2(t,x,r)  \dd r \dd x\\
&\quad+2\int_\Omega\int_{r_0}^\infty\int_r^{\infty}\beta(\tilde r,\cdot)\kappa(r,\tilde r)\psi(t,x,\tilde r)\dd\tilde r\,
\alpha(r)\psi(t,x,r)\dd r\dd x\\
&\quad- \frac12\int_\Omega \int_{r_0}^\infty \phi(t,x)\tau(r)\alpha(r)\partial_r\psi^2(t,x,r)\dd r \dd x.
\end{split}
\end{equation}
We begin by focusing on the evaluation of the first two terms on the right-hand side. By simple manipulations we deduce that
\begin{equation}\label{compute}
\begin{aligned}
Y(t,x)&:=-\int_{r_0}^\infty \left(\alpha(r)\beta(r, \cdot)\psi^2(t,x,r)  - 2\alpha(r)\psi(t,x,r)\int_r^{\infty}\beta(\tilde r,\cdot)\kappa(r,\tilde r)\psi(t,x,\tilde r)\dd\tilde r\right)
\dd r\\
&=-\int_{r_0}^\infty \alpha(r)\psi(t,x,r) \left(\beta(r, \cdot)\psi(t,x,r) -2\int_r^{\infty}\beta(\tilde r,\cdot)\kappa(r,\tilde r)\psi(t,x,\tilde r)\dd\tilde r\right)\dd r\\
&=-\int_{r_0}^\infty \alpha(r)\psi(t,x,r) \left(\beta(r,\cdot)\psi(t,x,r) -2\int_r^{\infty}\frac{\beta(\tilde r,\cdot)\psi(t,x,\tilde r)}{\tilde r}\dd\tilde r\right)\dd r\\
&=\int_{r_0}^\infty \frac{\alpha(r)\psi(t,x,r)}{r} \partial_r\left( r^{2}\int_r^{\infty}\frac{\beta(\tilde r,\cdot)\psi(t,x,\tilde r)}{\tilde r}\dd\tilde r\right)\dd r\\
&=:\int_{r_0}^\infty \frac{\alpha(r)\psi(t,x,r)}{r} \partial_rI(t,x,r)\dd r.
\end{aligned}
\end{equation}
Next, we rewrite $\psi$ in terms of $I(r)$ as follows:
$$
r\beta(r,\cdot)\psi(t,x,r)=-\partial_r I(t,x,r)+ 2r\int_r^{\infty}\frac{\beta(\tilde r,\cdot)\psi(t,x,\tilde r)}{\tilde r}\dd\tilde r=-\partial_r I(t,x,r)+ \frac{2I(t,x,r)}{r}.
$$
Hence, substituting this identity into \eqref{compute}, we deduce that
\begin{equation}\label{compute2}
\begin{aligned}
Y(t,x)&=\int_{r_0}^\infty \frac{\alpha(r)}{r^2\beta(r,\cdot)} r\beta(r,\cdot)\psi(t,x,r)\partial_rI(t,x,r)\dd r\\
&=\int_{r_0}^\infty \frac{\alpha(r)}{r^2\beta(r,\cdot)}\partial_rI(t,x,r)\left( -\partial_r I(t,x,r)+ \frac{2I(t,x,r)}{r}\right)\dd r\\
&=-\int_{r_0}^\infty \frac{\alpha(r)}{r^2\beta(r,\cdot)}|\partial_rI(t,x,r)|^2\dd r +\int_{r_0}^\infty \frac{\alpha(r)}{r^3\beta(r,\cdot)}\partial_rI^2(t,x,r)\dd r\\
&=-\int_{r_0}^\infty \frac{\alpha(r)}{r^2\beta(r,\cdot)}|\partial_rI(t,x,r)|^2\dd r -\int_{r_0}^\infty \partial_r\left(\frac{\alpha(r)}{r^3\beta(r,\cdot)}\right)I^2(t,x,r)\dd r\\
&\qquad - \frac{\alpha(r_0)}{r_0^3\beta(r_0,\cdot)}I^2(t,x,r_0).
\end{aligned}
\end{equation}
Thus, returning to \eqref{a}, we substitute $Y(t,x)$ into the first two terms on the right-hand side, and integrate by parts in the last term in \eqref{a} recalling that $\tau(r_0)=0$; this yields
\begin{equation}\label{b}
\begin{split}
&\frac{\dd}{\dd t}\int_\Omega \int_{r_0}^\infty\alpha(r)\psi^2(t,x,r)\dd r \dd x +2\int_\Omega \int_{r_0}^\infty \alpha(r)A(r)|\nabla_x\psi(t,x,r)|^2\dd r\dd x
\\
&\quad +2\int_{\Omega}\int_{r_0}^\infty \frac{\alpha(r)}{r^2\beta(r,\cdot)}|\partial_rI(t,x,r)|^2\dd r\dd x+2\int_{\Omega}\frac{\alpha(r_0)}{r_0^3\beta(r_0,\cdot)}I^2(t,x,r_0)\dd x\\
&= -2\int_{\Omega}\int_{r_0}^\infty \partial_r\left(\frac{\alpha(r)}{r^3\beta(r,\cdot)}\right)I^2(t,x,r)\dd r \dd x\\
&\quad +\int_\Omega \int_{r_0}^\infty \phi(t,x)\partial_r\left(\tau(r)\alpha(r)\right)\psi^2(t,x,r)\dd r \dd x.
\end{split}
\end{equation}
 By setting $\alpha(r):=r^3 \tilde{\beta}(r)$ with nonnegative $\tilde \beta$, using H\"{o}lder's inequality for the last term, we get
\begin{equation}\label{c}
\begin{split}
&\frac{\dd}{\dd t}\int_\Omega \int_{r_0}^\infty r^3 \tilde{\beta}(r)\psi^2(t,x,r)\dd r \dd x +2\int_\Omega \int_{r_0}^\infty r^3 \tilde{\beta}(r)A(r)|\nabla_x\psi(t,x,r)|^2\dd r\dd x
\\
&\quad +2\int_{\Omega}\int_{r_0}^\infty \frac{r \tilde{\beta}(r)}{\beta(r,\cdot)}|\partial_rI(t,x,r)|^2\dd r\dd x+2\int_{\Omega}\frac{ \tilde{\beta}(r_0)}{\beta(r_0,\cdot)}I^2(t,x,r_0)\dd x\\
&\le -2\int_{\Omega}\int_{r_0}^\infty \partial_r\left(\frac{ \tilde{\beta}(r)}{\beta(r,\cdot)}\right)I^2(t,x,r)\dd r \dd x\\
&\quad +\|\phi(t)\|_{\infty}\int_\Omega \int_{r_0}^\infty \left|\partial_r \tau(r)+\frac{\tau(r)\partial_r \tilde{\beta}(r)}{\tilde{\beta}(r)}+\frac{3\tau(r)}{r}\right|r^3\tilde{\beta}(r)\psi^2(t,x,r)\dd r \dd x.
\end{split}
\end{equation}
Consequently, if we choose $\tilde \beta$ such that for all $(r,\bu,\bD) \in (r_0,\infty)\times \mathbb{R}^d\times \mathbb{R}^{d\times d}$ there holds
\begin{align}
\partial_r\left(\frac{ \tilde{\beta}(r)}{\beta(r,\bu,\bD)}\right)\ge 0, \qquad \left|\frac{\tau(r)\partial_r \tilde{\beta}(r)}{\tilde{\beta}(r)}\right|\le C(\tilde{\beta}),
\label{findb}
\end{align}
then it follows from our assumptions on $\tau$ (cf. \eqref{tau-a}) that
\begin{equation*}
\begin{split}
&\frac{\dd}{\dd t}\int_\Omega \int_{r_0}^\infty r^3 \tilde{\beta}(r)\psi^2(t,x,r)\dd r \dd x +2\int_\Omega \int_{r_0}^\infty r^3 \tilde{\beta}(r)A(r)|\nabla_x\psi(t,x,r)|^2\dd r\dd x\\
&\le C(K,\|\phi_0\|_{\infty})\int_\Omega \int_{r_0}^\infty r^3\tilde{\beta}(r)\psi^2(t,x,r)\dd r \dd x
\end{split}
\end{equation*}
and by Gronwall's lemma we get
\begin{equation}\label{d}
\begin{split}
&\mbox{{\rm ess.sup}}_{t\in (0,T)}\int_\Omega \int_{r_0}^\infty r^3 \tilde{\beta}(r)\psi^2(t,x,r)\dd r \dd x +\int_Q \int_{r_0}^\infty r^3 \tilde{\beta}(r)A(r)|\nabla_x\psi(t,x,r)|^2\dd r\dd x \dd t \\
&\qquad \le C(K,\|\phi_0\|_{\infty})\int_\Omega \int_{r_0}^\infty r^3\tilde{\beta}(r)\psi_0^2(x,r)\dd r \dd x.
\end{split}
\end{equation}
In what follows, we focus on finding $\tilde{\beta}$ that satisfies \eqref{findb}.
For any nondecreasing $\gamma(r)\ge 1$ we define
$$
\tilde{\beta}(r):= \gamma(r)\, {\rm e}^{\int_{r_0}^r \eta(\tau)\dd \tau},
$$
where $\eta$ is introduced in \eqref{dfeta}
and we check \eqref{findb}. For the second inequality in \eqref{findb}, we observe that
$$
\begin{aligned}
\frac{\partial_r \tilde{\beta}(r)}{\tilde{\beta}(r)}=\frac{\partial_r\gamma(r)}{\gamma(r)} + \eta(r);
\end{aligned}
$$
consequently, since $\tau(r)\le Cr$ and $\eta$ satisfies \eqref{eta-as}, it suffices to choose $\gamma$ such that
\begin{equation}
\label{gamma-choice1}
\frac{r\partial_r \gamma(r)}{\gamma(r)}\le C \qquad \textrm{ for all } r\in [r_0,\infty]
\end{equation}
to ensure the validity of the second inequality in \eqref{findb}. To check also the first inequality in \eqref{findb}, we deduce with the aid of  \eqref{dfeta} and the fact that $\gamma$ is nondecreasing that
$$
\begin{aligned}
\partial_r\left(\frac{ \tilde{\beta}(r)}{\beta(r,\bu,\bD)}\right)&=\frac{ \partial_r\tilde{\beta}(r)\beta(r,\bu,\bD)-\tilde{\beta}(r)\partial_r\beta(r,\bu,\bD)}{\beta^2(r,\bu,\bD)}=\frac{\tilde{\beta}(r)}{\beta(r,\bu,\bD)}
\left(\frac{ \partial_r\tilde{\beta}(r)}{\tilde{\beta}(r)}-\frac{\partial_r\beta(r,\bu,\bD)}{\beta(r,\bu,\bD)}\right)\\
&\ge \frac{\tilde{\beta}(r)}{\beta(r,\bu,\bD)}
\left(\frac{ \partial_r\tilde{\beta}(r)}{\tilde{\beta}(r)}-\eta(r)\right)=\frac{\tilde{\beta}(r)}{\beta(r,\bu,\bD)}
\frac{ \partial_r\gamma(r)}{\gamma(r)}\ge 0.
\end{aligned}
$$
Consequently, the first inequality in \eqref{findb} is satisfied. In addition, setting $\gamma(r):=(1+r)^{\alpha}$ with arbitrary $\alpha \ge 0$, we see that $\gamma$ is increasing and also that \eqref{gamma-choice1} holds. Moreover, with such a choice, it is not difficult to verify using the definition of $\tilde{\beta}$ that
$$
\gamma(r)\le \tilde{\beta}(r)\le C\gamma(r).
$$
By substituting this relation into \eqref{d} we obtain \eqref{Key-Est}.
\end{proof}

\section{Proof of the main Theorem} \label{proof}
This section is devoted to the rigorous proof of our main result. To this end, we introduce several levels of approximation. First, since the argument is based on the energy method, we add a regularizing term to \eqref{NS} as follows:
\begin{equation}\label{NSeps}
\begin{split}
\partial_t\bv(t,x)+ \div_x (\bv(t,x)\otimes \bv(t,x)) +\nabla_x q(t,x)- {\rm div}_x \bS(\tilde\psi(t,x),\bD_x \bv(t,x))+\varepsilon|\bv|^{2p'-2}\bv&= \bff,\\
{\rm div}_x \bv(t,x) &= 0.
\end{split}
\end{equation}
Next, we also regularize the equation  \eqref{eq:psi} for $\psi$ by adding the term $-\delta\partial_{rr} \psi$ to its left-hand side
and to improve the integrability we add the terms $\delta|\psi|^{2p-2}\psi$ and $\delta|\phi|^{2p-2}\phi$ to  \eqref{eq:psi}  and  \eqref{eq:phi} respectively. Moreover, in order to prove the minimum principle for $\psi$ we replace $\psi$ by its positive part $\psi_+$ in several places. Also, in order to justify the a priori estimates for the approximating problem, we replace $\phi$ by $T_k (\phi)$, where the truncation function $T_k(s)$ is defined as follows:
$$
T_k(s):=\min (|s|,k)\,\textrm{sign }s,\qquad s \in \mathbb{R}.
$$
Finally,  we introduce $r_{\infty}>r_0$ and consider the following equation in $Q\times (r_0, r_{\infty})$, supplemented by homogeneous Neumann boundary conditions on $(0,T)\times \partial \Omega \times (r_0,r_\infty)$
and $(0,T)\times \Omega \times \{r_0,r_\infty\}$:
\begin{equation}\label{eq:psieta}
\begin{split}
\partial_t \psi&(t,x,r) + \bv(t,x) \cdot \nabla_x\psi(t,x,r) +\tau(r) T_k(\phi(t,x))\partial_r\psi_+(t,x,r)-A(r)\Delta_x\psi(t,x,r) - \delta\partial_{rr}\psi\\
&+\delta|\psi(t,x,r)|^{2p-2}\psi(t,x,r)= -\beta(r, \bv,\bD_x\bv)\psi(t,x,r) +2\int_r^{r_\infty}\beta(\tilde r,\bv, \bD_x\bv)\kappa(r,\tilde r)\psi_+(t,x,\tilde r)\dd\tilde r.
\end{split}
\end{equation}
Similarly, we replace the integral over the semi-infinite interval $(r_0,\infty)$ by one over $(r_0,r_\infty)$ and change the corresponding terms in \eqref{eq:phi} as follows:
\begin{equation}\label{eq:phieta}
\begin{split}
&\partial_t \phi(t,x)+\bv(t,x) \cdot \nabla_x\phi(t,x)
-A_0\Delta_x\phi(t,x)+\delta |\phi(t,x)|^{2p-2}\phi(t,x)\\
& =-T_k(\phi(t,x))\int_{r_0}^{r_\infty}\partial_r (r\tau(r))\psi_+(t,x,r)\dd r + 2 \int_0^{r_0} r \int_{r_0}^{r_\infty}\beta(\tilde r,\bv, \bD_x\bv)\kappa(r,\tilde r)\psi_+(t,x,\tilde r)\dd\tilde r \dd r,
\end{split}
\end{equation}
subject to a homogeneous Neumann boundary condition on $(0,T) \times \partial\Omega$.
Having introduced these three levels of approximation, we then first let $\delta \to 0$ and $r_{\infty} \to \infty$, and, finally, we let $\varepsilon \to 0$.

\subsection{Existence of a solution to the $\varepsilon$-, $\delta$-, $r_{\infty}$-approximating problem}
The existence of a solution to the approximating problem follows from the following result, which we state without proof, as the argument, based on monotone operator theory and the Aubin--Lions lemma, is completely straightforward.
\begin{Lemma}\label{Ex1}
Let $\Omega \subset \mathbb{R}^d$, $d \in \{2,3\}$, be a Lipschitz domain, $r_{\infty}\in (r_0,\infty)$ and $T>0$. Assume that (A1)--(A5) are satisfied. Moreover, let $\bv_0 \in L^2_{\bn,\diver}(\Omega)$, $\phi_0\in L^2(\Omega)$, $\psi_0\in L^2(\Omega \times (r_0,r_{\infty}))$ and $\bef \in L^{p'}(0,T; W^{-1,p'}_{\bn ,\diver})$. Then, there exists a triple $(\bv,\psi,\phi)$ such that
\begin{align}
\bv&\in \mathcal{C}(0,T; L^2(\Omega)^d) \cap L^p(0,T; W^{1,p}_{\bn,\diver}) \cap L^{2p'}(0,T; L^{2p'}(\Omega)^d),\label{SP1}\\
\partial_t \bv &\in \left(L^p(0,T; W^{1,p}_{\bn,\diver}) \cap L^{2p'}(0,T; L^{2p'}(\Omega)^d)\right)^*,\label{SP2}\\
\psi &\in \mathcal{C}(0,T; L^2(\Omega \times (r_0,r_{\infty})))\cap L^2(0,T; W^{1,2}(\Omega \times (r_0,r_{\infty})))\cap L^{2p}(Q\times (r_0,r_{\infty})),\label{SP3}\\
\partial_t\psi &\in \left( L^2(0,T; W^{1,2}(\Omega \times (r_0,r_{\infty})))\cap L^{2p}(Q\times (r_0,r_{\infty}))\right)^*,\label{SP4}\\
\phi &\in \mathcal{C}(0,T; L^2(\Omega ))\cap L^2(0,T; W^{1,2}(\Omega ))\cap L^{2p}(Q),\label{SP5}\\
\partial_t \phi &\in \left(L^2(0,T; W^{1,2}(\Omega ))\cap L^{2p}(Q)\right)^*,\label{SP6}
\end{align}
satisfying, for all $\bw \in L^p(0,T; W^{1,p}_{\bn,\diver}) \cap L^{p'}(Q)^d$,
\begin{equation}\label{NSeps-wf}
\begin{split}
&\int_0^T\langle\partial_t\bv,\bw \rangle -\int_Q \bv\otimes \bv \cdot \nabla_x \bw +\bS(\tilde\psi(t,x),\bD_x \bv(t,x))\cdot \nabla_x \bw+\varepsilon|\bv|^{2p'-2}\bv\cdot \bw \dd x\dd t\\
&= -\alpha^*\int_0^T \int_{\partial \Omega}\bv \cdot \bw\dd S +\int_0^T\langle \bff,\bw\rangle \dd t;
\end{split}
\end{equation}
for all $\omega \in L^2(W^{1,2}(\Omega\times (r_0,r_{\infty})))\cap L^{2p}(Q\times (r_0,r_{\infty}))$,
\begin{equation}\label{eq:psietawf}
\begin{split}
&\int_0^T \langle \partial_t \psi, \omega\rangle\dd t  + \int_{Q\times (r_0,r_{\infty})} \!\!\!\!\!\!\!\!\!\!\!\!\!\!\!\!-\psi \bv \cdot\nabla_x \omega  +T_k(\phi) \tau \omega \partial_r \psi_+ +A\nabla_x \psi \cdot \nabla_x \omega +\delta\partial_{r}\psi \partial_r \omega\dd r\dd x\dd t\\
&= \int_{Q\times (r_0,r_{\infty})} \!\!\!\!\!\!\!\!\!\!\!\!\!\!\!\!-\delta|\psi|^{2p-2}\psi\omega-\beta(\cdot, \bv, \bD_x \bv)\psi\omega +2\omega\int_r^{r_\infty}\beta(\tilde r,\bv, \bD_x\bv )\kappa(r,\tilde r)\psi_+(\tilde r,\cdot)\dd\tilde r\dd r\dd x\dd t;
\end{split}
\end{equation}
for all $\theta \in L^2(0,T; W^{1,2}(\Omega)) \cap L^{2p}(Q)$,
\begin{equation}\label{eq:phietawf}
\begin{split}
&\int_0^T\langle\partial_t \phi, \theta \rangle\dd t +\int_Q -\phi\bv \cdot \nabla_x\theta
+A_0\nabla_x \phi\cdot \nabla_x \theta+\delta |\phi|^{2p-2}\phi\, \theta\dd x\dd t\\
& =-\int_QT_k(\phi)\theta\int_{r_0}^{r_\infty}\partial_r (r\tau(r))\psi_+(r,\cdot)\dd r \dd x\dd t \\
&\quad + 2 \int_{Q}\int_0^{r_0} \theta r \int_{r_0}^{r_\infty}\beta(\tilde r,\bv, \bD_x\bv)\kappa(r,\tilde r)\psi_+(\tilde r,\cdot)\dd\tilde r \dd r\dd x\dd t;
\end{split}
\end{equation}
and fulfilling, in addition,
$$
\bv(0)=\bv_0, \qquad \phi(0)=\phi_0,\qquad \psi(0)=\psi_0
$$
and
\begin{equation} \label{T-app}
{\bS(\tilde{\psi}(t,x),\bD_x \bv(t,x))}:= \nu(\tilde\psi(t,x), |\bD_x\bv(t,x)|)\,\bD_x\bv(t,x)
\end{equation}
with
\begin{equation}\label{psi-aver-app}
\tilde{\psi}(t,x):=\int_{r_0}^{r_{\infty}} \gamma(r)\psi(t,x,r)\dd r.
\end{equation}
\end{Lemma}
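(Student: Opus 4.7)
The plan follows the approach the authors indicate: a Galerkin discretization, a priori estimates produced by coercivity of the main operators, the Aubin--Lions lemma for strong compactness in the nonlinear lower-order terms, and the Browder--Minty monotonicity trick for the Cauchy stress. The whole scheme is made routine by the regularizations in the approximate system: the $\varepsilon|\bv|^{2p'-2}\bv$ term provides the extra integrability of $\bv$ needed to pass to the limit in $\bv\otimes\bv$; the $\delta|\psi|^{2p-2}\psi$, $\delta|\phi|^{2p-2}\phi$, and $-\delta\partial_{rr}\psi$ terms supply additional integrability/regularity of $\psi$ and $\phi$; the truncation $T_k(\phi)$ and the use of $\psi_+$ make the couplings globally Lipschitz; and the restriction to the bounded $r$-interval $(r_0,r_\infty)$ makes $\kappa(r,\tilde r)\le 1/r_0$ a bounded kernel.

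Concretely, I would pick smooth Galerkin bases $\{\bw_j\}\subset\mathcal{V}_{\diver}$ for $\bv$, $\{\zeta_j\}$ of $W^{1,2}(\Omega)$ with Neumann data for $\phi$, and $\{\xi_j\}$ of $W^{1,2}(\Omega\times(r_0,r_\infty))$ with homogeneous Neumann data on the whole boundary for $\psi$. Projecting \eqref{NSeps-wf}--\eqref{eq:phietawf} onto the first $N$ elements gives a coupled Carath\'eodory ODE system for the time-dependent coefficients whose right-hand side is locally Lipschitz by the smoothness assumed in (A1)--(A6) together with the Lipschitz continuity of $T_k$ and $(\cdot)_+$. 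Testing the three discrete equations with $\bv^N$, $\psi^N$, $\phi^N$ themselves, and using (A6) for the coercivity of $\bS\cdot\bD_x\bv$, the fact that $\diver_x\bv^N=0$ and $\bv^N\cdot\bn=0$ (so the convective terms drop out), together with the bounds $|T_k(\phi^N)|\le k$ and $\kappa\le 1/r_0$, yields via Gronwall uniform-in-$N$ bounds of exactly the form \eqref{SP1}, \eqref{SP3}, \eqref{SP5}. Testing the equations against functions in the Galerkin subspace and using duality gives the matching dual bounds \eqref{SP2}, \eqref{SP4}, \eqref{SP6}.

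In the limit $N\to\infty$, the Aubin--Lions lemma produces strong convergence of $\bv^N$ in $L^p(Q)^d$, of $\psi^N$ in $L^2(Q\times(r_0,r_\infty))$, and of $\phi^N$ in $L^2(Q)$, together with pointwise a.e.\ convergence along a subsequence. This suffices to pass to the limit in all terms that are continuous in these topologies: $\bv^N\otimes\bv^N$ (where the $\varepsilon$-bound is crucial), $\psi^N_+$, $T_k(\phi^N)$, $\tilde\psi^N=\int_{r_0}^{r_\infty}\gamma(r)\psi^N\dd r$, and, by Vitali, $\varepsilon|\bv^N|^{2p'-2}\bv^N$, $\delta|\psi^N|^{2p-2}\psi^N$ and $\delta|\phi^N|^{2p-2}\phi^N$. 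For the Cauchy stress $\bS(\tilde\psi^N,\bD_x\bv^N)$, which converges weakly in $L^{p'}(Q)^{d\times d}$ to some $\chi$, I would apply the Browder--Minty argument: compare the $\limsup$ of the discrete energy identity with the limit equation tested against $\bv$, exploit the strict monotonicity in (A6) and the strong convergence of $\tilde\psi^N$ combined with the continuity of $\nu$, and thereby identify $\chi=\bS(\tilde\psi,\bD_x\bv)$.

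The one subtlety is the passage to the limit in $\beta(\tilde r,\bv^N,\bD_x\bv^N)$, which appears nonlinearly under the integrals on the right-hand sides of \eqref{eq:psietawf} and \eqref{eq:phietawf} while only weak $L^p$ convergence of $\bD_x\bv^N$ is available. Here I would exploit the fact that the Browder--Minty argument applied to the strictly monotone $\bS$ actually forces, along a further subsequence, a.e.\ convergence $\bD_x\bv^N\to\bD_x\bv$ in $Q$; combined with the smoothness and uniform boundedness of $\beta$ from (A3) and the dominated convergence theorem, this closes the limit passage in every integral containing $\beta$. Attainment of the initial data in the $L^2$ sense follows from the time-derivative bounds \eqref{SP2}, \eqref{SP4}, \eqref{SP6}, which place $\bv,\psi,\phi$ into suitable spaces of continuous $L^2$-valued functions after identification with their continuous representatives.
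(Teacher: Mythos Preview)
Your proposal is correct and follows precisely the route the paper indicates: the authors state this lemma without proof, noting only that ``the argument, based on monotone operator theory and the Aubin--Lions lemma, is completely straightforward,'' and your Galerkin scheme with Browder--Minty identification of $\bS$ and a.e.\ convergence of $\bD_x\bv^N$ to handle the $\beta$-terms is exactly that argument. One minor quibble: the Galerkin ODE right-hand side is only continuous (not locally Lipschitz) in the coefficients because of the map $c\mapsto\partial_r\psi^N_+$, but Peano's theorem together with your a~priori bounds still yields global existence, so this does not affect the outcome.
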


\subsection{A~priori estimates} \label{aprirori}
In this subsection, we derive bounds on the solution whose existence has been stated in Lemma~\ref{Ex1}. Some of the bounds will be independent of the order of approximation; if, on the other hand, something depends on one of the regularization parameters, this will be clearly indicated. Moreover, since we shall be following, step-by-step, the formal bounds developed in Section~\ref{apriori}, some of the details will be omitted for brevity. Also, to abbreviate the notation, we introduce $\Omega_{r_{\infty}}:=\Omega \times (r_0,r_{\infty})$. We formulate the result in the following lemma.
\begin{Lemma}\label{APestgood}
Let $(\bv,\psi,\phi)$ be a solution to \eqref{SP1}--\eqref{psi-aver-app} constructed in Lemma~\ref{Ex1}.  Then, the following energy identity holds for all $t\in (0,T)$:
\begin{align}
\frac12 \|\bv(t)\|_2^2 + \int_0^t \int_{\Omega}\varepsilon |\bv|^{2p'} + \bS \cdot \bD_x \bv\dd x\dd \tau + \alpha^*\int_0^t \int_{\partial \Omega}|\bv|^2 \dd S\dd \tau  &=\int_0^t \langle \bef, \bv \rangle \dd \tau + \frac12\|\bv_0\|_2^2. \label{1aprgood}
\end{align}
Consequently, we have the following uniform a~priori bound:
\begin{align}
&\mbox{{\rm ess.sup}}_{t\in (0,T)} \|\bv(t)\|_2 + \int_0^T \!\!\!\|\bv\|_{1,p}^p + \|\bS\|_{p'}^{p'} + \|\bv\|_{L^2(\partial \Omega)}^2 +\varepsilon\|\bv\|_{2p'}^{2p'} \dd t\! \le C \left(\|\bv_0\|_2^2 + \int_0^T\!\!\! \|\bef\|^{p'}_{-1,p'}\dd t\right)\!.\label{5aprgood}
\end{align}
Moreover, if $\psi_0$ and $\phi_0$ are nonnegative almost everywhere, then
\begin{align}
&\psi\ge 0 \textrm{ in } Q\times (r_0,r_{\infty}), \qquad \phi\ge 0 \textrm{ in } Q. \label{mp2}
\end{align}
In addition, if $\phi_0 \in L^{\infty}(\Omega)$, then the following uniform estimate holds:
\begin{align}
&\|\phi\|_{L^{\infty}(Q)}\le \max(K^2,\|\phi_0\|_{\infty}),\label{3aprgood}
\end{align}
and, for all $q\ge 3$, we have the following $\delta$-dependent bounds:
{\small
\begin{align}
&\int_0^T\|\phi\|_{1,2}\dd t \le  C(\|\phi_0\|_{\infty},K)\,\delta^{\frac12} r_{\infty}^{-3} + C(\|\phi_0\|_{\infty},K)  \int_{\Omega_{r_{\infty}}}r\psi_0\dd r\dd x  ~+~ C(\|\phi_0\|_{\infty},K)\,\delta^{\frac12}\int_{\Omega_{r_\infty}}r^3 \psi_0^2 \dd r \dd x ,\label{4aprgood}\\
&\mbox{{\small \rm ess.sup}}_{t\in (0,T)}\int_{\Omega_{r_{\infty}}}r^q\psi^2(t)\dd r \dd x\nonumber\\
&\qquad +\int_0^T \int_{\Omega_{r_{\infty}}} \!\!\!\!\!\!\!\! A(r)r^q|\nabla_x \psi|^2 + \delta r^q|\psi|^{2p} +\delta r^q|\partial_{r}\psi|^2 \dd r \dd x \dd t \le C(q){\rm e}^{C\delta T}\int_{\Omega_{r_{\infty}}}r^q\psi^2_0 \dd r \dd x,
 \label{6aprgood}\\
&\mbox{{\rm ess.sup}}_{t\in (0,T)} \int_{\Omega_{r_{\infty}}}r^{q-2}\psi(t)\dd r\dd x \le  C(\|\phi_0\|_{\infty},q,K)\left(\delta^{\frac12}  r_{\infty}^{q-6}+  \int_{\Omega_{r_{\infty}}}r^{q-2}\psi_0+\delta^{\frac12} r^q\psi_0^2\dd r\dd x\right).
\label{7aprgood}
\end{align}
}
\end{Lemma}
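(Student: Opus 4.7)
The plan is to replicate, in the regularized weak setting, the formal computations already carried out in Section~\ref{apriori}, exploiting the additional regularity \eqref{SP1}--\eqref{SP6} so that every test function used below is admissible. The extra regularizing terms $\varepsilon|\bv|^{2p'-2}\bv$, $\delta|\psi|^{2p-2}\psi$, $\delta|\phi|^{2p-2}\phi$ and $-\delta\partial_{rr}\psi$ all have favourable signs when tested against the natural quantities, so they either reinforce the coercive side of the identity or contribute lower-order terms that are absorbed by Young's inequality.

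First I would test \eqref{NSeps-wf} with $\bw=\bv$: the convective term vanishes by $\diver_x\bv=0$ and $\bv\cdot\bn=0$, the boundary contribution produces $\alpha^*\|\bv\|_{L^2(\partial\Omega)}^2$, and integration in time yields \eqref{1aprgood}. Then \eqref{5aprgood} follows from the coercivity and growth in \eqref{S}, Korn's inequality adapted to functions with zero normal trace, and Young's inequality on $\langle\bff,\bv\rangle$. For the minimum principle \eqref{mp2}, test \eqref{eq:psietawf} with $\omega=\psi_-$ and \eqref{eq:phietawf} with $\theta=\phi_-$: since $\psi_+\psi_-=0$ and $\partial_r\psi_+\cdot\psi_-=0$, every fragmentation/polymerization term is nonpositive after pairing, the $\delta|\psi|^{2p-2}\psi\,\psi_-$ term equals $\delta|\psi_-|^{2p}\ge 0$, and the $-\delta\partial_{rr}\psi\,\psi_-$ term contributes $\delta\int|\partial_r\psi_-|^2$ after integration by parts with Neumann data at $r_0,r_\infty$. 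Gronwall concludes. For \eqref{3aprgood}, choose the truncation parameter $k\ge M:=\max(K^2,\|\phi_0\|_\infty)$; on the set $\{\phi\ge M\}$ one has $T_k(\phi)=\phi\ge K^2$, so the pointwise inequality of Lemma~\ref{L-Max} survives verbatim, and testing \eqref{eq:phietawf} with $(\phi-M)_+$ (whose product with $\delta|\phi|^{2p-2}\phi$ is nonnegative) yields the stated essential bound.

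For the weighted $L^2$-estimate \eqref{6aprgood}, I would test \eqref{eq:psietawf} with $\omega=r^q\tilde\beta(r)\psi$ and repeat the algebra \eqref{compute}--\eqref{b} leading to the bound on $Y(t,x)$. The convective term vanishes, the polymerization term with $T_k(\phi)$ is handled exactly as in \eqref{c} with $\|T_k(\phi)\|_\infty\le\|\phi\|_\infty$, the nonlinear damping gives $\delta\int r^q\tilde\beta|\psi|^{2p}$ on the left, and the regularization $-\delta\partial_{rr}\psi$, tested against $r^q\tilde\beta\psi$, produces after integration by parts the term $\delta\int r^q\tilde\beta|\partial_r\psi|^2$ plus a mixed term $\delta\int\partial_r(r^q\tilde\beta)\psi\partial_r\psi$ which is absorbed by Young; the boundary contributions vanish by the Neumann conditions at $r_0,r_\infty$. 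Gronwall then yields \eqref{6aprgood}. For the moment bound \eqref{7aprgood}, test \eqref{eq:psietawf} with $\omega=r^{q-2}$: the computation of Lemma~\ref{L-psi-moment} applies, but now the $-\delta\partial_{rr}\psi$ term leaves after integration by parts a boundary flux at $r=r_\infty$ (the one at $r_0$ is killed by Neumann), estimated via Cauchy--Schwarz against \eqref{6aprgood} to produce the $\delta^{1/2}r_\infty^{q-6}$ factor. Finally, \eqref{4aprgood} is obtained by testing \eqref{eq:phietawf} with $\phi$, using the pointwise bound \eqref{oneside}, the essential supremum estimate \eqref{3aprgood}, and controlling $\int r\psi$ by the moment bound \eqref{7aprgood}.

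The main obstacle is the weighted $L^2$-estimate for $\psi$ in the presence of the $r$-regularization: the delicate integration-by-parts identity leading to \eqref{compute2}, which rewrites the quadratic fragmentation gain in terms of $|\partial_r I|^2$ and an initial flux at $r_0$, must remain intact after the addition of $-\delta\partial_{rr}\psi$, with all boundary flux contributions at $r=r_\infty$ either vanishing by Neumann data or being absorbable in a way compatible with the explicit $\delta^{1/2}r_\infty^{q-6}$ factor appearing in \eqref{7aprgood}. A secondary but important point is to verify that the truncation $T_k(\phi)$ does not destroy the maximum principle, which requires $k$ to be taken at least $\max(K^2,\|\phi_0\|_\infty)$; this is harmless since this threshold is independent of $\delta$ and $r_\infty$, so the subsequent limit passages are unaffected.
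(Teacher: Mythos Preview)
Your proposal is correct and follows essentially the same route as the paper: testing \eqref{NSeps-wf} with $\bv$, \eqref{eq:phietawf} with $\phi_-$, $(\phi-M)_+$, and $\phi$, and \eqref{eq:psietawf} with $\psi_-$, $\alpha(r)\psi$, and $r^{q-2}$, then invoking the algebra of Lemmas~\ref{L-MP}--\ref{key-lemma} together with Young's inequality and Gronwall.

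One small correction in your account of \eqref{7aprgood}: when you test with $\omega=r^{q-2}$, the $\delta$-regularization appears in the weak formulation as $\delta\int\partial_r\psi\,\partial_r\omega=\delta(q-2)\int r^{q-3}\partial_r\psi$, and there is \emph{no} boundary flux at $r=r_\infty$ --- the homogeneous Neumann condition is imposed at both endpoints $r_0$ and $r_\infty$, so both boundary contributions vanish. The $\delta^{1/2}r_\infty^{q-6}$ term does not come from a boundary flux; it arises instead from the bulk: after Young's inequality on the mixed term $\delta(q-2)r^{q-3}\partial_r\psi$ one obtains a piece $C\delta^{1/2}\,\delta\, r^{q}|\partial_r\psi|^2$ (absorbed by \eqref{6aprgood}) and a leftover $C\delta^{1/2}r^{q-6}$ whose integral over $(r_0,r_\infty)$ supplies the stated power of $r_\infty$. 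This is precisely the mechanism the paper uses in passing from \eqref{lest} to \eqref{lest2}. With this adjustment your sketch matches the paper's proof.
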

\begin{proof}
First, the energy identity \eqref{1aprgood} directly follows by setting $\bw:=\bv \chi_{[0,t]}$ in \eqref{NSeps-wf}. Then, the uniform estimate \eqref{5aprgood} is a consequence of the assumption \eqref{S} on $\bS$. To obtain the minimum principle for $\phi$, we mimic the proof of Lemma~\ref{L-MP}. Thus, we set $\theta:=\phi_{-}$ in \eqref{eq:phietawf}, where $\phi_{-}:=\min (0,\phi)$. Hence, using the divergence-free constraint on the velocity $\bv$, the fact that $\psi_{+}$, $\beta$ and $\tau$ are nonnegative and that $\tau$ is nondecreasing, we deduce that
$$
\frac{\dd}{\dd t} \|\phi_{-}\|_2^2 \le 0.
$$
Therefore, as $\phi_0$ is assumed to be nonnegative, we find that $\phi_{-}\equiv 0$ and $\phi\ge 0$ almost everywhere in $Q$. The proof of the inequality $\psi\ge 0$ is similar and is therefore omitted. Consequently, we can replace $\psi_{+}$ by $\psi$ in \eqref{eq:psietawf} and \eqref{eq:phietawf}.
In order to prove \eqref{3aprgood}, we mimic the proof of Lemma~\ref{L-Max}. To this end, we begin by noting that the right-hand side of \eqref{eq:phietawf} can be, for nonnegative $\theta$, bounded as follows:
$$
\begin{aligned}
&-\int_QT_k(\phi)\theta\int_{r_0}^{r_\infty}\partial_r (r\tau(r))\psi_+(t,x,r)\dd r \dd x\dd t \\
&\quad + 2 \int_{Q}\int_0^{r_0} \theta r \int_{r_0}^{r_\infty}\beta(\tilde r,\bv, \bD_x\bv)\kappa(r,\tilde r)\psi_+(t,x,\tilde r)\dd\tilde r \dd r\dd x\dd t\\
&\qquad \le -K^{-1}r_0\int_Q \theta \left(T_k(\phi) - M\right)\left(\int_{r_0}^{r_{\infty}} \psi(r)\dd r\right) \dd x\dd t,
\end{aligned}
$$
with $M:=\max(\|\phi_0\|_{\infty},K^2)$. Therefore, by setting $\theta:=(\phi-M)_+$ in \eqref{eq:phietawf}, using the fact that $\diver_x \bv=0$, the nonnegativity of $\phi$ and the above estimate, we arrive at
$$
\frac{\dd}{\dd t}\|(\phi-M)_+\|_2^2 \le 0.
$$
Since $\phi_0\le M$ almost everywhere in $\Omega$ the estimate \eqref{3aprgood} directly follows. Thus, in what follows, we assume that $k\ge M$ and therefore we can replace $T_k(\phi)$ by $\phi$ in \eqref{eq:psietawf} and \eqref{eq:phietawf}.

We see that all of the above estimates are independent of the order of the approximation. In what follows, we shall establish several estimates that depend on some of the regularization parameters, but the estimates will become uniform by applying the relevant limiting procedure. Note here that while in the proofs of Lemmas~\ref{L-mass}--\ref{key-lemma} we have formally used integration by parts with respect to $r$, assuming that $\psi$ vanishes sufficiently quickly at infinity, at the level of approximation, in a rigorous argument, such a procedure is not allowed. Fortunately, using the minimum principle for $\psi$ and the fact that $\tau(r_0)=0$, we can still integrate by parts in all of the desired terms at the cost of finally changing the equality sign to an inequality sign.

We start with the bounds that are similar to those in Lemma~\ref{key-lemma}. Hence, setting $\omega:=\alpha(r) \psi \chi_{[0,s]}$ in \eqref{eq:psietawf} with a suitable positive $\alpha(r)$, $r \in [r_0,\infty)$, to be selected below, we get (by recalling that $\Omega_{r_{\infty}}:=\Omega \times (r_0,r_{\infty})$) the following equality:
\begin{equation*}
\begin{split}
&\int_{\Omega_{r_{\infty}}}\frac{\alpha(r)}{2}(\psi^2(s)-\psi^2_0)\dd r\dd x +\int_0^s \int_{\Omega_{r_{\infty}}} \alpha(r)A(r)|\nabla_x \psi|^2 + \delta \alpha(r)|\psi|^{2p} +\delta \alpha |\partial_{r}\psi|^2\dd r\dd x\dd t \\
&= \int_0^s \int_{\Omega_{r_{\infty}}}-\beta(\cdot, \bv,\bD_x \bv) \alpha \psi^2 +2\alpha \psi\left(\int_r^{r_\infty}\beta(\tilde r,\bv,\bD_x\bv)\kappa(r,\tilde r)\psi(\tilde r)\dd\tilde r\right) \dd r\dd x\dd t \\
&\quad-\frac12\int_0^s\int_{\Omega_{r_{\infty}}}\phi \tau \alpha \partial_r \psi^2 \dd r\dd x\dd t
 -\delta\int_0^s\int_{\Omega_{r_{\infty}}}\alpha' \psi \partial_{r}\psi \dd r\dd x\dd t.
\end{split}
\end{equation*}
First, we use Young's inequality in order to absorb part of the last term into the left-hand side, to deduce the following inequality:
\begin{equation*}
\begin{split}
&\int_{\Omega_{r_{\infty}}}\frac{\alpha(r)}{2}(\psi^2(s)-\psi^2_0)\dd r\dd x +\int_0^s \int_{\Omega_{r_{\infty}}} \alpha(r)A(r)|\nabla_x \psi|^2 + \delta \alpha(r)|\psi|^{2p} +\frac{\delta}{2} \alpha |\partial_{r}\psi|^2\dd r\dd x\dd t \\
&\le \int_0^s \int_{\Omega_{r_{\infty}}}-\beta(\cdot, \bv, \bD_x \bv) \alpha \psi^2 +2\alpha \psi\left(\int_r^{r_\infty}\beta(\tilde r,\bv,\bD_x\bv)\kappa(r,\tilde r)\psi(\tilde r,\cdot)\dd\tilde r\right) \dd r\dd x\dd t \\
&\quad-\frac12\int_0^s\int_{\Omega_{r_{\infty}}}\phi \tau \alpha \partial_r \psi^2 \dd r\dd x\dd t
 +\delta\int_0^s\int_{\Omega_{r_{\infty}}}\frac{|\alpha'|^2}{2\alpha} \psi^2 \dd r\dd x\dd t.
\end{split}
\end{equation*}
Next, we integrate by parts in the third term on the right-hand side to deduce that
\begin{equation}\label{prI1}
\begin{split}
&\int_{\Omega_{r_{\infty}}}\frac{\alpha(r)}{2}(\psi^2(s)-\psi^2_0)\dd r\dd x +\int_0^s \int_{\Omega_{r_{\infty}}} \alpha(r)A(r)|\nabla_x \psi|^2 + \delta \alpha(r)|\psi|^{2p} +\frac{\delta}{2} \alpha |\partial_{r}\psi|^2\dd r\dd x\dd t \\
&\le \int_0^s \int_{\Omega_{r_{\infty}}}\!\!\!\!\!\!\! -\beta(\cdot,\bv,\bD_x \bv) \alpha \psi^2 +2\alpha \psi\left(\int_r^{r_\infty}\beta(\tilde r, \bv,\bD_x \bv)\kappa(r,\tilde r)\psi(\tilde r,\cdot)\dd\tilde r\right) \dd r\dd x\dd t\\
&\quad-\frac12\int_0^s\int_{\Omega}\phi(t,x) \left(\tau(r_{\infty}) \alpha(r_{\infty}) \psi^2(t,x,r_{\infty})-\tau(r_{0}) \alpha(r_{0}) \psi^2(t,x,r_{0})\right) \dd x\dd t\\
&\quad  +\delta\int_0^s\int_{\Omega_{r_{\infty}}}\frac{|\alpha'|^2}{2\alpha} \psi^2 \dd r\dd x\dd t+\frac12\int_0^s\int_{\Omega_{r_{\infty}}}\phi \partial_r (\tau \alpha) \psi^2 \dd r\dd x\dd t\\
&\le \int_0^s \int_{\Omega_{r_{\infty}}}\!\!\!\!\!\!\! -\beta(\cdot,\bv,\bD_x \bv) \alpha \psi^2 +2\alpha \psi\left(\int_r^{r_\infty}\beta(\tilde r,\bv,\bD_x \bv)\kappa(r,\tilde r)\psi(\tilde r,\cdot)\dd\tilde r\right) \dd r\dd x\dd t\\
&\quad  +\delta\int_0^s\int_{\Omega_{r_{\infty}}}\frac{|\alpha'|^2}{2\alpha} \psi^2 \dd r\dd x\dd t+\frac12\int_0^s\int_{\Omega_{r_{\infty}}}\phi \partial_r (\tau \alpha) \psi^2 \dd r\dd x\dd t,
\end{split}
\end{equation}
where for the second inequality we have used the fact that $\tau(r_0)=0$ and that $\phi$, $\tau$ and $\alpha$ are nonnegative. We then follow, step-by-step, the proof of Lemma~\ref{key-lemma}.  In particular, by setting $\alpha(r):=r^3 (1+r)^\gamma {\rm e}^{\int_{r_0}^r \eta(s)\dd s}$, with $\eta$ given by \eqref{dfeta} and nonnegative $\gamma$, we see that the first term on the right-hand side is nonpositive and therefore
\begin{equation}\label{prI2}
\begin{split}
&\int_{\Omega_{r_{\infty}}}\frac{\alpha(r)}{2}(\psi^2(s)-\psi^2_0)\dd r\dd x +\int_0^s \int_{\Omega_{r_{\infty}}} \alpha(r)A(r)|\nabla_x \psi|^2 + \delta \alpha(r)|\psi|^{2p} +\frac{\delta}{2} \alpha |\partial_{r}\psi|^2\dd r\dd x\dd t \\
&\le \delta\int_0^s\int_{\Omega_{r_{\infty}}}\frac{|\alpha'|^2}{2\alpha} \psi^2 \dd r\dd x\dd t+\frac12\int_0^s\int_{\Omega_{r_{\infty}}}\phi \partial_r (\tau \alpha) \psi^2 \dd r\dd x\dd t.
\end{split}
\end{equation}
Hence, by the assumption \eqref{tau-a}, the definition of $\alpha$ and Gronwall's lemma, we see that for all $q\ge 3$ (noting that we set $\gamma:=q-3$ in the definition of $\alpha$) we have
\begin{equation}\label{proI2}
\begin{split}
&\int_{\Omega_{r_{\infty}}}r^{q}\psi^2(t)\dd r\dd x +\int_0^T \int_{\Omega_{r_{\infty}}} r^{q}A(r)|\nabla_x \psi|^2 + \delta r^q|\psi|^{2p} +\frac{\delta}{2} r^q |\partial_{r}\psi|^2\dd r\dd x\dd t\\
&\le C(q){\rm e}^{C\delta T}\int_{\Omega_{r_{\infty}}}r^q\psi^2_0\dd r\dd x,
\end{split}
\end{equation}
which is nothing else than  \eqref{6aprgood}. Next, in order to estimate the high-order moments of $\psi$, for  any $q\ge 1$ we set $\omega:=r^q \chi_{[0,s]}$ in \eqref{eq:psietawf}  to obtain
\begin{equation}\label{lest}
\begin{split}
& \int_{\Omega_{r_{\infty}}}(r^q\psi(s)-r^q\psi_0)\dd r \dd x  + \int_0^s\int_{\Omega_{r_\infty}}\phi \tau r^q \partial_r \psi +\delta q\partial_{r}\psi r^{q-1}\dd r\dd x\dd t\\
&= -\int_0^{s}\int_{\Omega_{r_{\infty}}}\delta|\psi|^{2p-2}\psi r^q+\beta(\cdot, \bv,\bD_x \bv)\psi r^q -2r^q\left(\int_r^{r_\infty}\beta(\tilde r, \bv, \bD_x \bv)\kappa(r,\tilde r)\psi(\tilde r,\cdot )\dd\tilde r\right)\dd r\dd x\dd t.
\end{split}
\end{equation}
The first term on the right-hand side is nonpositive and can be therefore discarded. Next, we integrate by parts in the second term on the left-hand side and use the fact that all functions involved are nonnegative (and thus we can neglect their values at $r_{\infty}$); and, finally, we use Young's inequality in the last term on the left-hand side to deduce that
\begin{equation}\label{lest2}
\begin{split}
& \int_{\Omega_{r_{\infty}}}(r^q\psi(s)-r^q\psi_0) \dd r \dd x  \le \int_0^s\int_{\Omega_{r_\infty}} C(q)\delta^{\frac12}(\delta r^{q+2}|\partial_{r}\psi|^2+ r^{q-4})+\phi \partial_r(\tau r^q)  \psi \dd r \dd x \dd t\\
 &\quad +\int_0^s\int_{\Omega_{r_\infty}}-\beta\psi r^q +2r^q\left(\int_r^{r_\infty}\beta(\tilde r,\bv,\bD_x \bv)\kappa(r,\tilde r)\psi(\tilde r,\cdot)\dd\tilde r\right) \dd r \dd x \dd t.
\end{split}
\end{equation}
For the first term on the right-hand side, we use the estimate \eqref{6aprgood} and for the remaining parts we can follow, verbatim, the proof of Lemma~\ref{L-psi-moment}  (by using the assumption \eqref{tau-a} on $\tau$). Finally, we replace $q$ by $q-2$ to complete the proof of \eqref{7aprgood}.

Finally, we focus on \eqref{4aprgood}. Setting $\theta:=\phi$ in \eqref{eq:phietawf}, using the bound \eqref{3aprgood}, and the assumptions (A3) and (A4), we deduce that
\begin{equation}\label{conphi}
\begin{split}
\int_Q|\nabla_x \phi|^2 +\delta |\phi|^{2p} \dd x \dd t \le  C(A_0,K,\|\phi_0\|_{\infty})\int_{Q}\int_{r_0}^{r_\infty}r\psi \dd r  \dd x \dd t
\end{split}
\end{equation}
and \eqref{4aprgood} then follows by using \eqref{7aprgood} with $q=3$.
\end{proof}

\subsection{The limits $\delta\to 0_+$ and $r_{\infty} \to \infty$}
This subsection is devoted to passage to the limits $\delta\to 0_+$ and $r_{\infty} \to \infty$; i.e., we eliminate the presence of the elliptic regularization of $\psi$ and we pass from the bounded interval $(r_0,r_{\infty})$ to $(r_0,\infty)$. We shall use the notations $\Omega_{\infty}:=\Omega \times (r_0,\infty)$ and $Q_{\infty}:=(0,T)\times \Omega_{\infty}$. The associated existence result is formulated in the following lemma.
\begin{Lemma}\label{Ex2}
Let $\Omega \subset \mathbb{R}^d$, $d \in \{2,3\}$, be a Lipschitz domain and $T>0$. Assume that (A1)--(A5) are satisfied. Moreover, let $\bv_0 \in L^2_{\bn,\diver}$, $\bef \in L^{p'}(0,T; W^{-1,p'}_{\bn ,\diver})$, and let $\phi_0\in L^{\infty}(\Omega)$ and $\psi_0\in \mathcal{D}(\Omega_{\infty})$ be nonnegative; then, there exists a triple $(\bv,\psi,\phi)$ such that
\begin{align}
\bv&\in \mathcal{C}(0,T; L^2(\Omega)^d) \cap L^p(0,T; W^{1,p}_{\bn,\diver}) \cap L^{2p'}(0,T; L^{2p'}(\Omega)^d),\label{SP1_2}\\
\partial_t \bv &\in \left(L^p(0,T; W^{1,p}_{\bn,\diver}) \cap L^{2p'}(0,T; L^{2p'}(\Omega)^d)\right)^*,\label{SP2_2}\\
\psi &\in \mathcal{C}_{w}(0,T; L^2(\Omega_{\infty}))\cap L^2((0,T)\times (r_0,r_{\infty}); W^{1,2}(\Omega)),\label{SP3_2}\\
\partial_t\psi &\in \left( L^{2p'}(0,T; W^{1,2p'}(\Omega_{\infty} )\,\cap\, W^{1,2}(\Omega_{\infty} ))\right)^*,\label{SP4_2}\\
\phi &\in \mathcal{C}(0,T; L^2(\Omega ))\cap L^2(0,T; W^{1,2}(\Omega ))\cap L^{\infty}(Q),\label{SP5_2}\\
\partial_t \phi &\in \left(L^2(0,T; W^{1,2}(\Omega ))\right)^*,\label{SP6_2}
\end{align}
which satisfies, for all $\bw \in L^p(0,T; W^{1,p}_{\bn,\diver}) \cap L^{p'}(Q)^d$:
\begin{equation}\label{NSeps-wf_2}
\begin{split}
&\int_0^T\langle\partial_t\bv,\bw \rangle \dd t-\int_Q \bv\otimes \bv \cdot \nabla_x \bw +\bS(\tilde\psi(t,x),\bD_x \bv(t,x))\cdot \nabla_x \bw+\varepsilon |\bv|^{2p'-2}\bv\cdot \bw \dd x\dd t\\
&= -\alpha^*\int_0^T \int_{\partial \Omega}\bv \cdot \bw\dd S +\int_0^T\langle \bff,\bw\rangle \dd t;
\end{split}
\end{equation}
furthermore, for all $\omega \in L^{\infty}(0,T; W^{1,2}(\Omega_{\infty})\cap W^{1,\infty}(\Omega_{\infty}))\cap L^{2p}(Q_{\infty})$ fulfilling $w(\infty,\cdot)=0$ one has:
\begin{equation}\label{eq:psietawf_2}
\begin{split}
&\int_0^T \langle \partial_t \psi, \omega\rangle\dd t  + \int_{Q_{\infty}}-\psi \bv \cdot\nabla_x \omega  -\phi \partial_r(\tau \omega) \psi +A\nabla_x \psi \cdot \nabla_x \omega \dd r\dd x\dd t\\
&= \int_{Q_{\infty}} -\beta(\cdot, \bv, \bD_x \bv)\psi\omega +2\omega\int_r^{\infty}\beta(\tilde r,\bv, \bD_x\bv )\kappa(r,\tilde r)\psi(\tilde r,\cdot)\dd\tilde r\dd r\dd x\dd t;
\end{split}
\end{equation}
and, for all $\theta \in L^2(0,T; W^{1,2}(\Omega))$ one has:
\begin{equation}\label{eq:phietawf_2}
\begin{split}
&\int_0^T\langle\partial_t \phi, \theta \rangle\dd t +\int_Q -\phi\bv \cdot \nabla_x\theta
+A_0\nabla_x \phi\cdot \nabla_x \theta\dd x\dd t\\
& =-\int_Q \phi\theta\int_{r_0}^{\infty}\partial_r (r\tau(r))\psi(r,\cdot)\dd r \dd x\dd t \\
&\quad + 2 \int_{Q}\int_0^{r_0} \theta r \int_{r_0}^{\infty}\beta(\tilde r,\bv, \bD_x\bv)\kappa(r,\tilde r)\psi(\tilde r,\cdot)\dd\tilde r \dd r\dd x\dd t;
\end{split}
\end{equation}
with $\bv$, $\phi$ and $\psi$ attaining the initial data $\bv_0$, $\phi_0$ and $\psi_0$, respectively, in the sense that
\begin{equation}
\limsup_{t\to 0_+} \|\bv(t)-\bv_0\|_2^2 + \|\phi(t)-\phi_0\|_2^2 + \|\psi(t)-\psi_0\|_2^2=0,\label{nabyvat}
\end{equation}
and fulfilling \eqref{T-app}. Moreover, the solution satisfies \eqref{1aprgood}, \eqref{5aprgood}, \eqref{mp2} and \eqref{3aprgood}. In addition, for all $q\ge 3$ we have the following uniform estimates:
\begin{align}
&\int_0^T\|\phi\|_{1,2}\dd t \le  C(\|\phi_0\|_{\infty},K)  \int_{\Omega_{{\infty}}}r\psi_0\dd r\dd x
\dd t,\label{4aprgood_2}\\
&\mbox{{\rm ess.sup}}_{t\in (0,T)}\int_{\Omega_{{\infty}}}r^q\psi^2(t)\dd r\dd x+\int_0^T \int_{\Omega_{{\infty}}} A(r)r^q|\nabla_x \psi|^2\dd r\dd x\dd t \le C(q)\int_{\Omega_{{\infty}}}r^q\psi^2_0\dd r\dd x,
 \label{6aprgood_2}\\
&\mbox{{\rm ess.sup}}_{t\in (0,T)} \int_{\Omega_{{\infty}}}r^{q-2}\psi(t)\dd r\dd x \le C(\|\phi_0\|_{\infty},q,K)  \int_{\Omega_{{\infty}}}r^{q-2}\psi_0\dd r\dd x.
\label{7aprgood_2}
\end{align}
\end{Lemma}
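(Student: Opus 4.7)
The plan is to pass to the limit along a diagonal subsequence of the approximate solutions $(\bv_n,\psi_n,\phi_n)$ produced by Lemma~\ref{Ex1}, with $\delta_n\to 0_+$ and $r_\infty^n\to\infty$ simultaneously. Since $\psi_0\in \mathcal{D}(\Omega_\infty)$ is supported in $\Omega\times[r_0,R_0]$ for some finite $R_0$, I would take $r_\infty^n>R_0$, so that the initial datum is preserved and every weighted moment $\int_{\Omega_\infty}r^q\psi_0^2\dd r\dd x$ is finite and independent of $r_\infty^n$. The estimates of Lemma~\ref{APestgood} should then become uniform in both regularization parameters (with the prefactor ${\rm e}^{C\delta T}\to 1$), delivering, via reflexive weak compactness, limits satisfying the regularity \eqref{SP1_2}, \eqref{SP3_2}, \eqref{SP5_2} together with the quantitative bounds \eqref{5aprgood}, \eqref{3aprgood}, \eqref{4aprgood_2}, \eqref{6aprgood_2}, \eqref{7aprgood_2}.

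Next, I would derive the time-derivative bounds \eqref{SP2_2}, \eqref{SP4_2}, \eqref{SP6_2} directly from the weak formulations of the approximate problem; the regularizing contributions $-\delta_n\partial_{rr}\psi_n$ and $\delta_n|\psi_n|^{2p-2}\psi_n$, $\delta_n|\phi_n|^{2p-2}\phi_n$ are of order $O(\delta_n^{1/2})$ after testing, by \eqref{6aprgood}, and so disappear in the limit. The Aubin--Lions lemma then yields strong convergence $\bv_n\to\bv$ in $L^2(Q)^d$ (and, by interpolation with the $L^{2p'}$-bound, in $L^s(Q)^d$ for some $s>2$), $\phi_n\to\phi$ in $L^2(Q)$, and $\psi_n\to\psi$ in $L^2((0,T)\times(r_0,R);L^2(\Omega))$ for every finite $R$. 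The nonlocal integrals $\int_r^\infty\beta\kappa\psi_n\dd\tilde r$ would then be passed to the limit by splitting at a large cutoff $R$: on $(r_0,R)$ I would use local strong convergence, while the tail is controlled uniformly using $\beta\le K$, $\kappa(r,\tilde r)=1/\tilde r$ and the weighted moment bound \eqref{7aprgood_2}. Before passing to the limit, I would integrate the $\partial_r\psi_n$-term in \eqref{eq:psietawf} by parts onto the test function, the boundary contributions vanishing thanks to $\tau(r_0)=0$ and the $r$-compact support of $\omega$; this produces exactly the form $-\int\phi\,\partial_r(\tau\omega)\psi$ required in \eqref{eq:psietawf_2} and needs only weak convergence of $\psi_n$. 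The truncation $T_k(\phi_n)$ coincides with $\phi_n$ for $k\ge\max(K^2,\|\phi_0\|_\infty)$ by \eqref{3aprgood}, and $\psi_{n,+}$ equals $\psi_n$ by \eqref{mp2}.

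The main obstacle will be identifying the weak limits of the nonlinear terms $\bS(\tilde\psi_n,\bD_x\bv_n)$ and $\varepsilon|\bv_n|^{2p'-2}\bv_n$ in \eqref{NSeps-wf_2}. Because $\gamma(r)\le K(1+r)^\theta$ by (A5), I expect the weighted moment bounds \eqref{6aprgood_2}--\eqref{7aprgood_2}, together with local strong convergence of $\psi_n$, to yield strong convergence of $\tilde\psi_n=\int\gamma(r)\psi_n\dd r$ to $\tilde\psi$ in $L^{2}(Q)$, hence a.e.\ in $Q$ along a subsequence. The $\varepsilon$-penalty term should then pass via Vitali, using strong $L^2$-convergence of $\bv_n$ and its $L^{2p'}$-bound. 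For $\bS$ I would apply Minty's monotonicity trick: from the energy identity \eqref{1aprgood} for the approximate sequence, strong $L^2$-convergence of $\bv_n$, and lower semicontinuity of the surface and $\varepsilon$-penalty terms one should obtain
\begin{equation*}
\limsup_{n\to\infty}\int_Q \bS(\tilde\psi_n,\bD_x\bv_n)\cdot\bD_x\bv_n\dd x\dd t\le \int_Q \bS^*\cdot\bD_x\bv\dd x\dd t,
\end{equation*}
where $\bS^*$ denotes the weak $L^{p'}$-limit of $\bS(\tilde\psi_n,\bD_x\bv_n)$; strict monotonicity in (A6) together with continuity of $\nu$ in its first argument should then force $\bS^*=\bS(\tilde\psi,\bD_x\bv)$. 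The hard part is precisely this Minty step, since the dependence of $\nu$ on the polymer quantity $\tilde\psi_n$ demands genuine a.e.\ convergence of $\tilde\psi_n$ to apply the monotonicity pointwise in $t$; all the foregoing moment and cutoff analysis is tailored to deliver it. Finally, strong attainment of the initial data \eqref{nabyvat} should follow from weak-in-time continuity of $\bv$, $\phi$, $\psi$ (provided by the time-derivative bounds) combined with lower semicontinuity in the respective energy identities.
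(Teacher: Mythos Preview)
Your overall strategy matches the paper's closely, but there is one genuine gap. You claim that Aubin--Lions yields strong convergence $\psi_n\to\psi$ in $L^2((0,T)\times(r_0,R);L^2(\Omega))$ for every finite $R$. This fails: the only uniform spatial regularity for $\psi_n$ is in $L^2(0,T;L^2((r_0,R);W^{1,2}(\Omega)))$ (from the bound on $A(r)r^q|\nabla_x\psi_n|^2$ in \eqref{6aprgood}), and the embedding $L^2((r_0,R);W^{1,2}(\Omega))\hookrightarrow L^2((r_0,R)\times\Omega)$ is \emph{not} compact. The $r$-regularity coming from the term $\delta_n\partial_{rr}\psi_n$ carries only a bound of order $\delta_n^{-1/2}$ on $\|\partial_r\psi_n\|_{L^2}$ (again from \eqref{6aprgood}), which blows up as $\delta_n\to 0$. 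So there is no compactness in the $r$-direction, and Aubin--Lions cannot deliver pointwise or strong convergence of $\psi_n$ itself.

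The paper addresses exactly this point. Rather than seeking strong convergence of $\psi_n$, it fixes $z\in\mathcal{D}(r_0,\infty)$, sets $\tilde\psi^n_z(t,x):=\int z(r)\psi_n(t,x,r)\dd r$, and tests \eqref{eq:psietawf} with $\omega=z(r)\varphi(t,x)$ to obtain a uniform time-derivative bound on $\tilde\psi^n_z$ \emph{as a function of $(t,x)$ only}. Since $\tilde\psi^n_z$ is uniformly bounded in $L^2(0,T;W^{1,2}(\Omega))$ (by \eqref{6aprgood}, using that $z$ has compact support so $A(r)\ge c(z)>0$ on $\mathrm{supp}\,z$), standard Aubin--Lions now gives $\tilde\psi^n_z\to\tilde\psi_z$ strongly in $L^2(Q)$. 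Approximating $\gamma$ by such compactly supported $z$ and controlling the tail via the moment bound \eqref{6aprgood} then yields $\tilde\psi^n\to\tilde\psi$ strongly in $L^2(Q)$, which is all you need for the Minty step. Everything else in your outline (integration by parts in $r$, the energy-identity version of Minty, passage in the nonlocal terms once $\bD_x\bv^n\to\bD_x\bv$ a.e., attainment of initial data) is correct and follows the paper.
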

\begin{proof}
To prove Lemma~\ref{Ex2} we use the existence result obtained in Lemma~\ref{Ex1} in conjunction with the uniform estimates derived in Lemma~\ref{APestgood}. Thus we set $\delta:=n^{-1}$ and $r_{\infty}:=\ln n$ in Lemma~\ref{Ex1} and denote the corresponding solution by ($\bv^n, \phi^n, \psi^n$). Using the estimates
\eqref{5aprgood}--\eqref{7aprgood}, the fact that $\psi_0$ is a smooth compactly supported function, and defining all functions involved to be identically  zero outside $(r_0,r_{\infty})$, we deduce the existence of a subsequence that we do not relabel and the existence of $(\bv,\phi,\psi, \bS)$, where $\phi$ and $\psi$ are nonnegative functions, such that
\begin{alignat}{2}
\bv^n &\rightharpoonup \bv &&\qquad\textrm{weakly in } L^p(0,T; W^{1,p}_{\bn, \diver})\cap L^{2p}(Q)^d,\label{con1}\\
\bS^n &\rightharpoonup \overline{\bS} &&\qquad\textrm{weakly in } L^{p'}(Q)^{d\times d},\label{con2}\\
\phi^n &\rightharpoonup^* \phi &&\qquad\textrm{weakly$^*$ in } L^{\infty}(Q),\label{con3}\\
\phi^n &\rightharpoonup \phi &&\qquad\textrm{weakly in } L^2(0,T; W^{1,2}(\Omega)),\label{con4}\\
\psi^n &\rightharpoonup \psi &&\qquad\textrm{weakly in } L^2(Q_{\infty})\cap L^2(0,T; L^2_{loc}(r_0,\infty; W^{1,2}(\Omega))).\label{con5}
\end{alignat}
Moreover, using the weak lower semicontinuity of the norm function, and the special choices of $\delta$ and $r_{\infty}$ made above, we deduce that the following uniform estimates hold for $q \geq 3$:
\begin{align}
&\|\phi\|_{L^{\infty}}(Q)\le C(K,\|\phi_0\|_{\infty}),\label{23aprgood_3}\\
&\mbox{{\rm ess.sup}}_{t\in (0,T)} \|\bv(t)\|_2 + \int_0^T \|\bv\|_{1,p}^p + \|\bS\|_{p'}^{p'} + \|\bv\|_{L^2(\partial \Omega)}^2 +\varepsilon\|\bv\|_{p'}^{p'} \dd t \le C(\bv_0,\bef),\label{5aprgood_3}\\
&\int_0^T\|\phi\|_{1,2}\dd t \le   C(\|\phi_0\|_{\infty},K)  \int_{\Omega_{{\infty}}}r\psi_0\dd r\dd x
\dd t,\label{4aprgood_3}\\
&\mbox{{\rm ess.sup}}_{t\in (0,T)}\int_{\Omega_{{\infty}}}r^q\psi^2(t)\dd r\dd x+\int_0^T \int_{\Omega_{{\infty}}} A(r)r^q|\nabla_x \psi|^2\dd r\dd x\dd t \le C(q)\int_{\Omega_{{\infty}}}r^q\psi^2_0 \dd r\dd x,
 \label{6aprgood_3}\\
&\mbox{{\rm ess.sup}}_{t\in (0,T)} \int_{\Omega_{\infty}}r^{q-2}\psi(t)\dd r\dd x \le  C(\|\phi_0\|_{\infty},q,K) \int_{\Omega_{\infty}}r^{q-2}\psi_0\dd r\dd x.
\label{7aprgood_3}
\end{align}

Our objective is to let $n\to \infty$ (and consequently $\delta \to 0_+$ and $r_{\infty} \to \infty)$) in \eqref{NSeps-wf}--\eqref{eq:phietawf}. To do so, we first observe that \eqref{NSeps-wf}--\eqref{eq:phietawf} imply the following weak convergence
results: 
\begin{alignat}{2}
\partial_t \bv^n &\rightharpoonup \partial_t \bv &&\qquad\textrm{weakly in } \left(L^p(0,T; W^{1,p}_{\bn, \diver})\cap L^{2p}(Q)^d\right)^*,\label{con6}\\
\partial_t \phi^n &\rightharpoonup \partial_t\phi &&\qquad\textrm{weakly in } (L^2(0,T; W^{1,2}(\Omega)))^*,\label{con7}\\
\partial_t \psi^n &\rightharpoonup \partial_t\psi &&\qquad\textrm{weakly in } (L^q(0,T; W^{1,2}(K)\cap W^{1,q}(K)))^*,\label{con8}
\end{alignat}
for sufficiently large $q$  and any $K:=\Omega \times (r_0, k)$ with arbitrary fixed $k\in \mathbb{R}$. In addition, red by noting the uniform (with respect to $r_{\infty}$) bounds \eqref{23aprgood_3}--\eqref{7aprgood_3}, we see that the weak limit $\partial_t \psi$ satisfies \eqref{SP4_2}.
Consequently, we can use the Aubin--Lions lemma to deduce strong convergence, and then extract subsequences still labelled by the index $n$ (i.e., without indicating the subsequences in our notation), such that
\begin{alignat}{2}
\bv^n &\to \bv &&\qquad\textrm{a.e. in  } Q,\label{con9}\\
\phi^n &\to  \phi &&\qquad\textrm{a.e. in  } Q.\label{con10}
\end{alignat}
However, we cannot claim the same convergence result for $\psi^n$ because of the lack of the compactness with respect to $r$. Nevertheless, we will show that, for arbitrary $z\in \mathcal{D}(r_0,\infty)$, we have
\begin{align}
\int_{r_0}^{r_\infty^n}z(r)\psi^n(t,x,r)\dd r &\to \int_{r_0}^{\infty}z(r)\psi(t,x,r)\dd r \qquad\textrm{strongly in  } L^2(Q).\label{con11}
\end{align}
First of all, it follows from \eqref{con5} that
\begin{align}
\int_{r_0}^{r_\infty^n}z(r)\psi^n(t,x,r)\dd r &\rightharpoonup \int_{r_0}^{\infty}z(r)\psi(t,x,r)\dd r \qquad\textrm{weakly in  } L^2(Q).\label{con12}
\end{align}
Hence the limit is defined uniquely. Next, denoting
\begin{equation}\label{deftilde}
\tilde{\psi}_{z}^n(t,x):=\int_{r_0}^{r_{\infty}} z(r)\psi^n(t,x,r)\dd r,
\end{equation}
we can set in \eqref{eq:psietawf} $\omega:=z(r)\varphi(t,x)$, where $\varphi \in \mathcal{D}(\Omega)$ is arbitrary, and for sufficiently large $n$ such that $\ln n$ is not in the support of $z$ we obtain the identity
\begin{equation}\label{eq:psireg}
\begin{split}
&\int_0^T \left \langle \partial_t \tilde{\psi}^n_{z}, \varphi \right \rangle\dd t  + \int_{Q} -\tilde{\psi}^n_{z} \bv \cdot\nabla_x \varphi  +\nabla_x \tilde{\psi}^n_{Az} \cdot \nabla_x \varphi\dd x\dd t\\
&= \int_{Q}\varphi \left(\int_{r_0}^{{\infty}}-\delta |\psi^n|^{2p-2}\psi^n z -\beta(\cdot,\bv^n,\bD_x \bv^n)\psi^n
+2z \left(\int_r^{\infty}\beta(\tilde r,\bv^n,\bD_x \bv^n)\kappa(r,\tilde r)\psi^n(\tilde r,\cdot )\dd\tilde r\right)\dd r\right)\dd x\dd t \\
& - \int_{Q} \phi \varphi \left(\int_{r_0}^{{\infty}} -\partial_r(\tau z) \psi^n\dd r\right) +\delta \varphi \left(\int_{r_0}^{{\infty}}\partial_{r}\psi^n \partial_r z\dd r\right)\dd x\dd t.
\end{split}
\end{equation}

\noindent
Consequently, using the a~priori estimates \eqref{5aprgood}--\eqref{7aprgood}, we see that, for sufficiently large $q$,
\begin{align}
\partial_t \tilde{\psi}^n_z&\rightharpoonup \partial_t \tilde{\psi}_z \qquad\textrm{weakly in  } L^q(0,T; W^{1,q}_0(\Omega)).\label{con14}
\end{align}
On the other hand, using \eqref{6aprgood} we also have that
$$
\int_0^T \|\tilde{\psi}^n_z\|_{1,2}^2\dd t \le C(z)\int_{Q_{\infty}} |\psi^n|^2 + A(r)r^3|\nabla_x \psi^n|^2 \dd r\dd x\dd t \le C,
$$
where the first inequality follows from the fact that $z$ has a compact support.
Hence the Aubin--Lions lemma completes the proof of \eqref{con11}.

Next, we shall apply a similar convergence argument to $\tilde{\psi}^n$, which is defined in \eqref{psi-aver}. Since $\gamma$ is a continuous function, we can find a sequence of $\gamma_{\varepsilon} \in \mathcal{D}(r_0,\infty)$ such that $\gamma_{\varepsilon}\nearrow \gamma$ almost everywhere and also locally in $\mathcal{C}(r_0,\infty)$. For such an approximation, we can however use the convergence result \eqref{con11} and obtain
\begin{align}
\int_{r_0}^{r_\infty^n}\gamma_{\varepsilon}(r)\psi^n(t,x,r)\dd r &\to \int_{r_0}^{\infty}\gamma_{\varepsilon}(r)\psi(t,x,r)\dd r \qquad\textrm{strongly in  } L^2(Q).\label{con18}
\end{align}
Hence, for the original function $\tilde{\psi}^n$ we have that
\begin{equation}\label{original}
\begin{split}
\int_Q |\tilde{\psi}^n -\tilde{\psi}|^2\dd x\dd t&:= \int_Q \left|\int_{r_0}^{\infty}\gamma(r)(\psi^n(r,t,x)-\psi(r,t,x))\dd r \right|^2\dd x\dd t\\
&\le 2\int_Q \left|\int_{r_0}^{\infty}(\gamma(r)-\gamma_{\varepsilon}(r))(\psi^n(r,t,x)-\psi(r,t,x))\dd r \right|^2\dd x\dd t\\
&\quad +2\int_Q \left|\int_{r_0}^{\infty}\gamma_{\varepsilon}(r)(\psi^n(r,t,x)-\psi(r,t,x))\dd r \right|^2\dd x\dd t.
\end{split}
\end{equation}
Consequently, using \eqref{con18}, we deduce that
\begin{equation}\label{original2}
\begin{split}
\lim_{n\to \infty}\int_Q |\tilde{\psi}^n -\tilde{\psi}|^2\dd x\dd t&\le 2\lim_{n\to \infty}\int_Q \left|\int_{r_0}^{\infty}(\gamma(r)-\gamma_{\varepsilon}(r))(\psi^n(r,t,x)-\psi(r,t,x))\dd r \right|^2\dd x\dd t\\
&\le 4\lim_{n\to \infty}\int_Q \left|\int_{r_0}^{r^*}(\gamma(r)-\gamma_{\varepsilon}(r))(\psi^n(r,t,x)-\psi(r,t,x))\dd r \right|^2\dd x\dd t\\
&\quad  +4\lim_{n\to \infty}\int_Q \left|\int_{r^*}^{\infty}|\gamma(r)||\psi^n(r,t,x)-\psi(r,t,x)|\dd r \right|^2\dd x\dd t.
\end{split}
\end{equation}
Thanks to the uniform convergence of $\gamma_{\varepsilon}$ on compact sets, we can also easily let $\varepsilon \to 0_+$ (with the aid of the uniform bound \eqref{7aprgood}) to get
\begin{equation}\label{original3}
\begin{split}
&\lim_{n\to \infty}\int_Q |\tilde{\psi}^n -\tilde{\psi}|^2\dd x\dd t\le 4\lim_{n\to \infty}\int_Q \left|\int_{r^*}^{\infty}|\gamma(r)||\psi^n(r,t,x)-\psi(r,t,x)|\dd r \right|^2\dd x\dd t\\
&\le 4\int_{r^*}^{\infty}\frac{dr}{r^2}\dd r \lim_{n\to \infty}\int_Q \int_{r^*}^{\infty}|\gamma(r)|^2 r^2|\psi^n(r,t,x)-\psi(r,t,x)|^2\dd r \dd x\dd t\\
&\le K(r^*)^{-1}\lim_{n\to \infty}\int_Q \int_{r^*}^{\infty} r^{2\theta + 2}(|\psi^n(r,t,x)|^2+|\psi(r,t,x)|^2)\dd r \dd x\dd t\le C(K,\psi_0)(r^*)^{-1},
\end{split}
\end{equation}
where we have used \eqref{growthgamma}, the fact that $\psi_0$ is compactly supported and the estimate \eqref{6aprgood}. Consequently, letting $r^*\to \infty$ we deduce that
\begin{align}
\tilde{\psi}^n &\to \tilde{\psi}\qquad\textrm{strongly in  } L^2(Q).\label{con20}
\end{align}

We now have all ingredients in place to complete the proof. First, it is standard to let $n\to \infty$ in \eqref{NSeps-wf} to deduce, for all $\bw \in L^p(0,T;W^{1,p}_{\bn,\diver})\cap L^{2p}(Q)^d$, that
\begin{equation}\label{NSeps-wf_3}
\begin{split}
&\int_0^T\langle\partial_t\bv,\bw \rangle -\int_Q \bv\otimes \bv \cdot \nabla_x \bw +\overline{\bS}\cdot \nabla_x \bw+\varepsilon|\bv|^{2p'-2}\bv\cdot \bw \dd x\dd t\\
&= -\alpha^*\int_0^T \int_{\partial \Omega}\bv \cdot \bw\dd S +\int_0^T\langle \bff,\bw\rangle \dd t,
\end{split}
\end{equation}
as well as the first limit in \eqref{nabyvat}. Consequently, setting $\bw:=\bv$, we obtain the following identity:
\begin{align}
\frac12 \|\bv(T)\|_2^2 + \int_0^T \int_{\Omega}\varepsilon |\bv|^{2p'} + \overline{\bS} \cdot \bD_x \bv\dd x\dd \tau + \alpha^*\int_0^T \int_{\partial \Omega}|\bv|^2 \dd S\dd \tau  &=\int_0^T \langle \bef, \bv \rangle \dd \tau + \frac12\|\bv_0\|_2^2. \label{1aprgood_2}
\end{align}
Then, using the weak lower semicontinuity of the norm function, the energy identity \eqref{1aprgood} with $t=T$ and the weak convergence result \eqref{con1}, we deduce that
\begin{align}
\limsup_{n\to \infty} \int_Q\bS(\tilde{\psi}^n,\bD_x \bv^n) \cdot \bD_x \bv^n\dd x\dd t \le \int_Q \overline{\bS}\cdot \bD_x \bv\dd x\dd t. \label{1aprgood_4}
\end{align}
Moreover, using \eqref{con20}, Lebesgue's dominated convergence theorem and the assumption \eqref{S}, we also see that
\begin{align}
\bS(\tilde{\psi}^n,\bD_x \bv)&\to \bS(\tilde{\psi},\bD_x \bv)\qquad\textrm{strongly in  } L^{p'}(Q)^{d\times d},\label{con24}
\end{align}
which, when combined with \eqref{1aprgood_4} and \eqref{con1}, \eqref{con2}, leads to
\begin{align}
\limsup_{n\to \infty} \int_Q(\bS(\tilde{\psi}^n,\bD_x \bv^n)-\bS(\tilde{\psi}^n,\bD_x \bv)) \cdot (\bD_x \bv^n-\bD_x \bv)\dd x\dd t =0.
\end{align}
The strict monotonicity assumption \eqref{S} then implies that there is a subsequence such that
\begin{align}
\bD_x \bv^n\to \bD_x \bv \qquad \textrm{a.e. in  } Q,\label{con26}
\end{align}
and consequently we have that $\overline{\bS}=\bS(\tilde{\psi},\bD_x \bv)$. Finally, in view of the convergence
results and a~priori estimates obtained, one can let $n\to \infty$ in
\eqref{eq:psietawf}, \eqref{eq:phietawf}
to deduce \eqref{eq:psietawf_2}, \eqref{eq:phietawf_2}. We note in this respect that, thanks to the nonnegativity of both $\psi$ (and $\phi$) the $_+$ symbol, indicating the nonnegative part of a function, can be omitted from \eqref{eq:psietawf}. Furthermore \eqref{eq:psietawf} is strongly nonlinear because of the presence of the term $\delta |\psi|^{2p-2}\psi$, but we are considering the limit $\delta\rightarrow 0_+$, so this term vanishes in the limit of $\delta \rightarrow 0_+$ thanks to the a priori estimate \eqref{6aprgood}.

Finally, by using a standard parabolic regularity result,  one can show the attainment of the initial datum \eqref{nabyvat} for $\phi$, and also that $\psi \in \mathcal{C}_{w}(0,T; L^2(\Omega_{\infty}))$ fulfils, for $t\to 0_+$,
\begin{equation}
\psi(t)\rightharpoonup \psi_0 \qquad \textrm{weakly in } L^2(\Omega_{\infty}).\label{weakinit}
\end{equation}
To strengthen this convergence result, we recall \eqref{prI2}, which, thanks to Gronwall's lemma, leads to
\begin{equation}\label{prI2used}
\begin{split}
&\int_{\Omega_{r^n_{\infty}}}\alpha(r)(\psi^n(s))^2\dd r\dd x \le {\rm e}^{C(q,\phi_0,K)t}\int_{\Omega_{r^n_{\infty}}}\alpha(r)\psi_0^2\dd r\dd x.
\end{split}
\end{equation}
Hence, letting $n\to \infty$ and using the weak lower semicontinuity of the norm function, we get
\begin{equation*}
\begin{split}
&\int_{\Omega_{{\infty}}}\alpha(r)(\psi(s))^2\dd r\dd x \le {\rm e}^{C(q,\phi_0,K)t}\int_{\Omega_{{\infty}}}\alpha(r)\psi_0^2\dd r\dd x.
\end{split}
\end{equation*}
This directly leads to
\begin{equation*}
\begin{split}
&\limsup_{t\to 0_+} \int_{\Omega_{{\infty}}}\alpha(r)(\psi(s))^2\dd r\dd x \le \int_{\Omega_{{\infty}}}\alpha(r)\psi_0^2\dd r\dd x,
\end{split}
\end{equation*}
which, when combined with \eqref{weakinit}, yields \eqref{nabyvat} for $\psi$.
\end{proof}

\subsection{The limit $\varepsilon \to 0_+$}
In this final subsection, we complete the proof of the main theorem in the paper. For this purpose, we use the existence result from Lemma~\ref{Ex2}. Hence, for $\psi_0 \in L^1(\Omega; L^1_{\theta^*_1}(r_0,\infty))\cap L^2(\Omega; L^2_{\theta_2^*}(r_0,\infty))$ with $\theta_1^*> \theta\ge 1$ and $\theta_2^*\ge 3$ we find a sequence $\psi^{\varepsilon}_0 \in \mathcal{D}(\Omega_{\infty})$ that converges strongly to $\psi_0$ in the corresponding spaces. We then denote by $(\bv^{\varepsilon}, \phi^{\varepsilon},\psi^{\varepsilon})$ the solution constructed in Lemma~\ref{Ex2} with the initial data $\phi_0$ and $\psi_0^{\varepsilon}$. Our goal is now to let $\varepsilon \to 0_+$ and obtain a solution whose existence is claimed in Theorem~\ref{T:theorem}. Henceforth, we denote by $C$ a generic constant that may depend only on the data but not on $\varepsilon$. Recalling the estimates established in the previous section, we have that \eqref{mp2} is valid and
\begin{align}
&\mbox{{\rm ess.sup}}_{t\in (0,T)} \|\bv^{\varepsilon}(t)\|_2 + \int_0^T \|\bv^{\varepsilon}\|_{1,p}^p + \|\bS^{\varepsilon}\|_{p'}^{p'} + \|\bv^{\varepsilon}\|_{L^2(\partial \Omega)}^2 +\varepsilon\|\bv^{\varepsilon}\|_{p'}^{p'} \dd t \le C,\label{5aprgood_f}\\
&\|\phi^{\varepsilon}\|_{L^{\infty}(Q)}\le C,\label{3aprgood_f}\\
&\mbox{{\rm ess.sup}}_{t\in (0,T)}\int_{\Omega_{{\infty}}}r^{\theta_2^*}(\psi^{\varepsilon})^2(t)\dd r\dd x+\int_0^T \int_{\Omega_{{\infty}}} \!\!\!\!\!\!\!\! A(r)r^{\theta_2^*}|\nabla_x \psi^{\varepsilon}|^2\dd r\dd x\dd t  \le C,
 \label{6aprgood_f}\\
&\mbox{{\rm ess.sup}}_{t \in (0,T)} \int_{\Omega_{{\infty}}}r^{\theta_1^*}\psi^{\varepsilon}(t)\dd r\dd x \le  C.
\label{7aprgood_f}
\end{align}

\noindent
Therefore, using the same arguments as before, one can deduce that there exists a quadruple $(\bv, \overline{\bS}, \psi,\phi)$ such that, for sufficiently large $q>1$, one has
\begin{align}
\bv^{\varepsilon} &\rightharpoonup^* \bv &&\textrm{weakly$^*$ in } L^p(0,T; W^{1,p}_{\bn, \diver})\cap L^{\infty}(0,T; L^2(\Omega)^d),\label{con1_f}\\
\bS^{\varepsilon} &\rightharpoonup \overline{\bS} &&\textrm{weakly in } L^{p'}(Q)^{d\times d},\label{con2_f}\\
\phi^{\varepsilon} &\rightharpoonup^* \phi &&\textrm{weakly$^*$ in } L^{\infty}(Q),\label{con3_f}\\
\phi^{\varepsilon} &\rightharpoonup \phi &&\textrm{weakly in } L^2(0,T; W^{1,2}(\Omega)),\label{con4_f}\\
\psi^{\varepsilon} &\rightharpoonup \psi &&\textrm{weakly in } L^2(Q_{\infty})\cap L^2(0,T; L^2_{loc}(r_0,\infty; W^{1,2}(\Omega))),\label{con5_f}\\
\psi^{\varepsilon} &\rightharpoonup^* \psi &&\textrm{weakly$^*$ in } L^{\infty}(0,T; L^1(\Omega; L^1_{\theta^*_1}(r_0,\infty)))\cap L^{\infty}(0,T; L^2(\Omega; L^2_{\theta^*_2}(r_0,\infty))),\label{con5.1_f}\\
\bv^{\varepsilon} &\rightharpoonup \bv &&\textrm{weakly in } L^2(0,T; L^2(\partial \Omega)^d),\label{con5.5_f}\\
\partial_t \bv^{\varepsilon} &\rightharpoonup \partial_t \bv &&\textrm{weakly in } L^q(0,T; W^{-1,q'}_{\bn, \diver}),\label{con6_f}\\
\partial_t \phi^{\varepsilon} &\rightharpoonup \partial_t\phi &&\textrm{weakly in } (L^2(0,T; W^{1,2}(\Omega)))^*,\label{con7_f}\\
\partial_t \psi^{\varepsilon} &\rightharpoonup \partial_t\psi &&\textrm{weakly in } (L^q(0,T; W^{1,2}(\Omega_{\infty})\cap W^{1,q}(\Omega_{\infty})))^*,\label{con8_f}\\
\bv^n &\to \bv &&\textrm{a.e. in  } Q,\label{con9_f}\\
\phi^n &\to  \phi &&\textrm{a.e. in  } Q.\label{con10_f}
\end{align}
Based on these convergence results, we can deduce similarly as before that, for a sequence of $\gamma_{\delta} \in \mathcal{D}(r_0,\infty)$ such that $\gamma_{\delta}\nearrow \gamma$, almost everywhere and also locally in $\mathcal{C}(r_0,\infty)$, we have
\begin{align}
\int_{r_0}^{\infty}\gamma_{\delta}(r)\psi^{\varepsilon}(t,x,r)\dd r &\to \int_{r_0}^{\infty}\gamma_{\delta}(r)\psi(t,x,r)\dd r \qquad\textrm{strongly in  } L^2(Q).\label{con18_f}
\end{align}
Next, we slightly change the convergence result for the original sequence since the initial data are assumed to be only in $L^1_{\theta_1^*}$. Thus, we focus only on $L^1$ convergence; in particular, we note that
\begin{equation}\label{original_f}
\begin{split}
\int_Q |\tilde{\psi}^{\varepsilon} -\tilde{\psi}|\dd x\dd t&:= \int_Q \left|\int_{r_0}^{\infty}\gamma(r)(\psi^{\varepsilon}(r,t,x)-\psi(r,t,x))\dd r \right|\dd x\dd t\\
&\le \int_Q \left|\int_{r_0}^{\infty}(\gamma(r)-\gamma_{\delta}(r))(\psi^{\varepsilon}(r,t,x)-\psi(r,t,x))\dd r \right|\dd x\dd t\\
&\quad +\int_Q \left|\int_{r_0}^{\infty}\gamma_{\delta}(r)(\psi^{\varepsilon}(r,t,x)-\psi(r,t,x))\dd r \right|\dd x\dd t.
\end{split}
\end{equation}
Consequently, using \eqref{con18_f}, we see that
\begin{equation}\label{original2_f}
\begin{split}
\lim_{\varepsilon\to 0_+}\int_Q |\tilde{\psi}^{\varepsilon} -\tilde{\psi}|\dd x\dd t&\le \lim_{\varepsilon \to 0_+}\int_Q \left|\int_{r_0}^{r^*}(\gamma(r)-\gamma_{\delta}(r))(\psi^{\varepsilon}(r,t,x)-\psi(r,t,x))\dd r \right|\dd x\dd t\\
&\quad  +\lim_{\varepsilon\to 0_+}\int_Q \left|\int_{r^*}^{\infty}|\gamma(r)||\psi^{\varepsilon}(r,t,x)-\psi(r,t,x)|\dd r \right|\dd x\dd t,
\end{split}
\end{equation}
and thanks to  the uniform convergence of $\gamma_{\delta}$ on compact sets, we can also easily let $\delta \to 0_+$ (with the help of the uniform bound \eqref{7aprgood_f}) to deduce using \eqref{growthgamma} that
\begin{equation}\label{original3_f}
\begin{split}
&\lim_{\varepsilon\to 0_+}\int_Q |\tilde{\psi}^{\varepsilon} -\tilde{\psi}|\dd x\dd t\le \lim_{\varepsilon \to 0_+}\int_Q \left|\int_{r^*}^{\infty}|\gamma(r)||\psi^{\varepsilon}(r,t,x)-\psi(r,t,x)|\dd r \right|\dd x\dd t\\
&\le C\lim_{\varepsilon \to 0_+}\int_Q \left|\int_{r^*}^{\infty}r^{\theta}|\psi^{\varepsilon}(r,t,x)-\psi(r,t,x)|\dd r \right|\dd x\dd t\\
&\le C(r^*)^{\theta - \theta_1^*}(\|\psi^{\varepsilon}\|_{L^1_{\theta_1^*}}+\|\psi^{\varepsilon}\|_{L^1_{\theta_1^*}})\le C(r^*)^{\theta - \theta_1^*}\overset{r^*\to \infty}{\to} 0,
\end{split}
\end{equation}
and consequently
\begin{align}
\tilde{\psi}^{\varepsilon} &\to \tilde{\psi}\qquad\textrm{strongly in  } L^1(Q).\label{con20_f}
\end{align}
Based on these convergence results, it is now an easy task to let $\varepsilon \to 0_+$ in \eqref{NSeps-wf_2}--\eqref{eq:phietawf_2} to get \eqref{T:3}--\eqref{T:3b} provided that we can show that
\begin{align}
\bD_x \bv^{\varepsilon} \to \bD_x \bv \qquad\textrm{strongly in  } L^1(Q)^{d\times d}.\label{con21_f}
\end{align}
In addition, the proof of the attainment of the initial data for $\bv$ and $\phi$ is rather standard, and for $\psi$ we can use the same scheme as in the previous section. Also to prove the conservation of mass identity for the polymer chains, we follow the proof of Lemma~\ref{L-mass}, but with a proper cut-off function. Specifically, we can set
$$\mbox{$z:=\chi_{[0,s]}$ in \eqref{T:3b} and $\varphi:=\chi_{[0,s]} r \eta_k(r)$ in \eqref{T:3a}},$$
with arbitrary $\eta_k \in \mathcal{D}(\mathbb{R})$, and after summing the resulting identities we deduce that, for almost all $t\in (0,t)$, we have
\begin{equation}\label{psi-cal3_f}
\begin{split}
&\int_\Omega \phi(s,x)-\phi_0(x) +\left(\int_{r_0}^{\infty} r\eta_k(\psi(s,x,r)-\psi_0(x,r)) \dd r\right) \dd x\\
&= -\int_0^s\int_{Q_{\infty}}r\eta_k \beta(r, \cdot)\psi(t,x,r) \dd x \dd r\dd t\\
&\qquad + 2\int_0^s\int_{\Omega_{\infty}} r\eta_k\int_r^{\infty}\kappa(r,\tilde r)\beta(\tilde r,\cdot)\psi(t,x,\tilde r)\dd\tilde r  \dd x \dd r\dd t\\
&\qquad + \int_0^s\int_{\Omega_{\infty}} (\partial_r(r\tau \eta_k)-\partial_r(r\tau))  \phi(t,x) \psi(t,x,r) \dd x \dd r\dd t\\
&\qquad + 2\int_0^s\int_{\Omega} \int_0^{r_0} r \int_{r_0}^{\infty}\beta(\tilde r,\cdot)\kappa(r,\tilde r)\psi(t,x,\tilde r)\dd\tilde r \dd r\dd x\dd t.
\end{split}
\end{equation}
Next, we assume that $\eta_k \equiv 1$ on $(0,r_0)$ and we follow the proof of Lemma~\ref{L-mass} to get, with the help of \eqref{df-kappa}, the following chain of equalities:
$$
\begin{aligned}
&2\int_0^s\int_{\Omega_{\infty}} r\eta_k\int_r^{\infty}\kappa(r,\tilde r)\beta(\tilde r,\cdot)\psi(t,x,\tilde r)\dd\tilde r  \dd x \dd r\dd t\\
&\quad +2\int_0^s\int_{\Omega} \int_0^{r_0} r \int_{r_0}^{\infty}\beta(\tilde r,\cdot)\kappa(r,\tilde r)\psi(t,x,\tilde r)\dd\tilde r \dd r\dd x\dd t\\
&=2\int_0^s\int_{\Omega_{\infty}} r\eta_k\int_r^{\infty}\kappa(r,\tilde r)\beta(\tilde r,\cdot)\psi(t,x,\tilde r)\dd\tilde r  \dd x \dd r\dd t\\
&\quad +2\int_0^s\int_{\Omega} \int_0^{r_0} r\eta_k \int_{r}^{\infty}\beta(\tilde r,\cdot)\kappa(r,\tilde r)\psi(t,x,\tilde r)\dd\tilde r \dd r\dd x\dd t\\
&=2\int_0^s\int_{\Omega} \int_0^{\infty}r\eta_k(r)\int_r^{\infty}\kappa(r,\tilde r)\beta(\tilde r,\cdot)\psi(t,x,\tilde r)\dd\tilde r  \dd x \dd r\dd t\\
&=2\int_0^s\int_{\Omega} \int_0^{\infty}\left(\int_0^{\tilde{r}} r\eta_k(r)\kappa(r,\tilde r)\dd r\right) \beta(\tilde r,\cdot)\psi(t,x,\tilde r)\dd\tilde r  \dd x\dd t\\
&=2\int_0^s\int_{\Omega} \int_{r_0}^{\infty}\left(\int_0^{\infty} r\eta_k(r)\kappa(r,\tilde r)\dd r\right) \beta(\tilde r,\cdot)\psi(t,x,\tilde r)\dd\tilde r  \dd x\dd t\\
&=\int_0^s\int_{\Omega} \int_{r_0}^{\infty}\tilde{r}\eta_k(\tilde{r})\beta(\tilde r,\cdot)\psi(t,x,\tilde r)\dd\tilde r  \dd x\dd t\\
&\quad -\int_0^s\int_{\Omega} \int_{r_0}^{\infty} \tilde{r}^{-1}\left(\int_0^{\tilde{r}} r^2\eta'_k(r)\dd r\right) \beta(\tilde r,\cdot)\psi(t,x,\tilde r)\dd\tilde r  \dd x\dd t.
\end{aligned}
$$
By substituting this identity into \eqref{psi-cal3_f}, we obtain
\begin{equation}\label{psi-cal3_ff}
\begin{split}
&\int_\Omega \phi(s,x)-\phi_0(x) +\left(\int_{r_0}^{\infty} r\eta_k(\psi(s,x,r)-\psi_0(x,r)) \dd r\right) \dd x\\
&= \int_0^s\int_{\Omega_{\infty}} (r\tau \eta'_k(r) + (\eta_k(r)-1) \partial_r(r\tau))  \phi(t,x) \psi(t,x,r) \dd x \dd r\dd t\\
&\quad -\int_0^s\int_{\Omega} \int_{r_0}^{\infty} \tilde{r}^{-1}\left(\int_0^{\tilde{r}} r^2\eta'_k(r)\dd r\right) \beta(\tilde r,\cdot)\psi(t,x,\tilde r)\dd\tilde r  \dd x\dd t.
\end{split}
\end{equation}
Next, we let $\eta_k \nearrow 1$. Thus, we set $\eta_k(r)\equiv 1$ for $r\le k$ and $\eta_k(r) \equiv 0$ for $r\ge 2k$ such that $|\eta'_k|\le Ck^{-1}$. Using this definition it is not difficult to let $k \to \infty$ in the terms on the left-hand side of \eqref{psi-cal3_ff} by using the monotone convergence theorem. For the term on the right-hand side, we can use \eqref{tau-a} to deduce that
$$
\begin{aligned}
&\left|\int_0^s\int_{\Omega_{\infty}} (r\tau \eta'_k(r) + (\eta_k(r)-1) \partial_r(r\tau))  \phi(t,x) \psi(t,x,r) \dd x \dd r\dd t\right.\\
&\quad \left.-\int_0^s\int_{\Omega} \int_{r_0}^{\infty}\tilde{r}^{-1}\left(\int_0^{\tilde{r}} r^2\eta'_k(r)\dd r\right) \beta(\tilde r,\cdot)\psi(t,x,\tilde r)\dd\tilde r  \dd x\dd t\right|\\
&\le C\int_0^s\int_{\Omega}\int_{k}^{\infty}\phi(t,x) \psi(t,x,r) \dd x \dd r\dd t+ C\int_0^s\int_{\Omega} \int_{k}^{\infty}\tilde{r}\psi(t,x,\tilde r)\dd\tilde r  \dd x\dd t\\
&\le C(\|\psi\|_{\infty})\int_0^s\int_{\Omega} \int_{k}^{\infty}\tilde{r}\psi(t,x,\tilde r)\dd\tilde r  \dd x\dd t \overset{k\to \infty}\to 0,
\end{aligned}
$$
where the last convergence follows from the fact that $\psi \in L^1_1(Q_{\infty})$.

Hence, it remains to show \eqref{con21_f}. First, following (3.56), (3.57) and (3.60) in \cite{BuGwMaSw2012}, using the fact that $\Omega \in \mathcal{C}^{1,1}$ we can find $q^{\varepsilon}_1$, $q^{\varepsilon}_2$ and some $q^*>1$ such that
\begin{alignat}{2}
q^{\varepsilon}_1 &\rightharpoonup q_1 &&\qquad\textrm{ weakly in } L^{p'}(Q), \label{pr1}\\
q^{\varepsilon}_2 &\rightharpoonup q_2 &&\qquad \textrm{ weakly in }L^{q*}(Q), \label{pr1.5}\\
q^{\varepsilon}_2 &\to q_2 &&\qquad\textrm{ strongly in } L^{h}(Q) \textrm{ for all $h \in [1,q^*)$}, \label{pr2}
\end{alignat}
fulfilling, for all $\bw \in L^{\infty}(0,T; W^{1,\infty}(\Omega)^d \cap W^{1,1}_{\bn})$,
\begin{equation}\label{NSeps-tlak}
\begin{split}
&\int_0^T\langle\partial_t\bv^{\varepsilon},\bw \rangle -\int_Q \bv^{\varepsilon}\otimes \bv^{\varepsilon} \cdot \nabla_x \bw +\bS(\tilde{\psi}^{\varepsilon},\bD_x \bv^{\varepsilon})\cdot \nabla_x \bw+\varepsilon |\bv^{\varepsilon}|^{2p'-2}\bv^{\varepsilon}\cdot \bw \dd x\dd t\\
&= -\alpha^*\int_0^T \int_{\partial \Omega}\bv^{\varepsilon} \cdot \bw\dd S +\int_0^T\langle \bff,\bw\rangle \dd t+\int_Q (q_1^{\varepsilon}+q_2^{\varepsilon})\diver_{x} \bw \dd x\dd t,
\end{split}
\end{equation}
and using the convergence results \eqref{con1_f}--\eqref{con10_f}, we also get
\begin{equation}\label{NS-tlak}
\begin{split}
&\int_0^T\langle\partial_t\bv,\bw \rangle -\int_Q \bv\otimes \bv \cdot \nabla_x \bw +\overline{\bS}\cdot \nabla_x \bw\dd x\dd t\\
&= -\alpha^*\int_0^T \int_{\partial \Omega}\bv\cdot \bw\dd S +\int_0^T\langle \bff,\bw\rangle \dd t+\int_Q (q_1+q_2)\diver_{x} \bw \dd x\dd t,
\end{split}
\end{equation}
which is nothing else than \eqref{T:3} with $q$ given as $q:=q_1+q_2$ provided we show \eqref{con21_f} to identify $\overline{\bS}$.
We now set $n:=[\varepsilon^{-1}]$, reinstate the index $n$ for all functions concerned, and define
$$
\begin{aligned}
\bu^n&:= \bv^n - \bv,\\
\bef^n&:= -\frac{1}{n}|\bv^n|^{2p'-2}\bv^n,\\
\bG^n&:= \bv^n \otimes \bv^n - \bv \otimes \bv + (q^n_2 -q_2)\bI,\\
\bH^n&:= -\bS(\tilde{\psi}^n, \bD_x \bv^n)+q_1^n \bI,\\
\bH&:= -\overline{\bS} +q_1 \bI.
\end{aligned}
$$
Hence, we may apply Lemma~\ref{Lip-Lem}. Thus, we set $\lambda^*:=k$ and we see that we can find a sequence $\lambda_k^n\in [k,Ck^{p^k}]$ and the corresponding sequence $\bu^{n,k}$ fulfilling \eqref{strong}, \eqref{norm}. Next, for any nonnegative $g\in \mathcal{D}(Q)$ we set $\bw:=\bu^{n,k}$ in \eqref{NSeps-tlak}, \eqref{NS-tlak}; by subtracting the resulting equations, using \eqref{bG}, \eqref{bf} and \eqref{tdercon}, we obtain
\begin{equation}\label{dream}
\limsup_{n\to \infty} \int_{Q} (\bS(\tilde{\psi}^n,\bD_x \bv^n) - \overline{\bS}) \cdot \bD_x (\bu^{n,k})g - g(q_1^n-q_1)\diver_x \bu^{n,k}\dd x\dd t \le C(g) (k^{1-p} + k^{-1})^{\beta}.
\end{equation}
Moreover, by using \eqref{strong}, \eqref{norm}, and also the fact that
$$
\bS(\tilde{\psi}^n,\bD_x \bv) \to \bS(\tilde{\psi},\bD_x \bv) \quad \textrm{strongly in } L^{p'}(Q)^{d\times d},
$$
which is the consequence of \eqref{con20_f}, Lebesgue's dominated convergence theorem and the assumption \eqref{S},
we see that \eqref{dream} reduces to
\begin{equation}\label{dream2}
\limsup_{n\to \infty} \int_{Q} (\bS(\tilde{\psi}^n,\bD_x \bv^n) - \bS(\tilde{\psi}^n,\bD_x \bv)) \cdot \bD_x (\bu^{n,k})g - gq_1^n\diver_x \bu^{n,k}\dd x\dd t \le C(g) (k^{1-p} + k^{-1})^{\beta},
\end{equation}
and consequently, after denoting by $Q_g$ the support of $g$ and using the definition of $E^n_k$, we have that
\begin{equation}\label{dream3}
\begin{split}
&\limsup_{n\to \infty} \int_{Q_g\setminus E^n_k} (\bS(\tilde{\psi}^n,\bD_x \bv^n) - \bS(\tilde{\psi}^n,\bD_x \bv)) \cdot \bD_x (\bv^n-\bv)g\dd x\dd t \\
&\le C(g)\limsup_{n\to \infty} \int_{Q_g \cap E^n_k} (|\bH^n|+|\bH|)|\bD_x (\bu^{n,k})| \dd x\dd t+ C(g) (k^{1-p} + k^{-1})^{\beta}\\
&\le C(g)(k^{1-p}+k^{-\beta})+  C(g)(k^{1-p} + k^{-1})^{\beta}.
\end{split}
\end{equation}
Finally, using the monotonicity of $\bS$, H\"{o}lder's inequality and \eqref{meas}, we have that
\begin{equation}\label{dream4}
\begin{split}
&\limsup_{n\to \infty} \int_{Q} \sqrt{(\bS(\tilde{\psi}^n,\bD_x \bv^n) - \bS(\tilde{\psi}^n,\bD_x \bv)) \cdot \bD_x (\bv^n-\bv)}g\dd x\dd t\\
&\le \limsup_{n\to \infty} \int_{Q_g\setminus E^n_k} \sqrt{(\bS(\tilde{\psi}^n,\bD_x \bv^n) - \bS(\tilde{\psi}^n,\bD_x \bv)) \cdot \bD_x (\bv^n-\bv)}g\dd x\dd t\\
&\quad +\limsup_{n\to \infty} \int_{Q_g\cap E^n_k} \sqrt{(\bS(\tilde{\psi}^n,\bD_x \bv^n) - \bS(\tilde{\psi}^n,\bD_x \bv)) \cdot \bD_x (\bv^n-\bv)}g\dd x\dd t\\
&\le C\limsup_{n\to \infty} \left(\int_{Q_g\setminus E^n_k} (\bS(\tilde{\psi}^n,\bD_x \bv^n) - \bS(\tilde{\psi}^n,\bD_x \bv)) \cdot \bD_x (\bv^n-\bv)g\dd x\dd t\right)^{\frac12} + C|Q_g \cap E^n_k|^{\frac12} \\
&\le C(g)(k^{1-p}+k^{-\beta})^{\frac12}+  C(g)(k^{1-p} + k^{-1})^{\frac{\beta}{2}}+Ck^{-\frac{p}{2}} \overset{k\to \infty}\to 0.
\end{split}
\end{equation}
Thus, using the strict monotonicity of $\bS$, see \eqref{S}, we deduce (for a subsequence) \eqref{con21_f}, which completes the proof.  \hfill $\Box$

\bigskip

\begin{Remarkno}In order to keep the length of the paper within reason, we focused here
on the mathematical analysis of a model that only admits polymerization
between a polymer and a monomer. The mathematical analysis of a more
complicated model, which also includes polymerization between different
polymer chains, is of interest. It would involve coupling the
generalized Navier--Stokes system of a form considered here, with a
continuous coagulation-fragmentation equation, where the viscosity
coefficient in the Navier--Stokes momentum equation depends on the size
distribution function satisfying a coagulation-fragmentation equation
whose nonlocal terms account for the formation of polymer chains by
coalescence of smaller chains, the breakage of polymer chains into
two smaller pieces, the depletion of polymer chains by coagulation with
other polymer chains, and the gain of polymer chains as a result of the fragmentation of larger chains. We expect though that the analysis
of such a more complicated model would proceed along similar lines.
\end{Remarkno}

\bibliographystyle{abbrv}
\bibliography{lipschitz}
\end{document}